\newtheorem {theorem} {Theorem}[section]
\newtheorem {proposition} [theorem]{Proposition}
\newtheorem {lemma}  [theorem]{Lemma}
\newtheorem {definition} [theorem]{Definition}
\newtheorem {remark} [theorem]{Remark}
\theoremstyle{remark}
\theoremstyle{definition}
\theoremstyle{conjecture}
\subjclass[2000]{Primary: 34C25 Secondary: 34A34, 37C07, 37C27}
\keywords{Non-autonomous differential equation; Limit cycle;
Periodic orbit}
\begin{document}

\title[the limit cycles of population models with time-varying factors]
{On the study of the limit cycles for a class of population models with time-varying factors}

\author[R. Tian, J. Huang and Y. Zhao]
	{Renhao Tian$^1$, Jianfeng Huang$^{2}$, Yulin Zhao$^3$}


\address{$^1$ School  of Mathematics (Zhuhai), Sun Yat-sen University, Zhuhai Campus, Zhuhai, 519082,  China}
\email{tianrh@mail2.sysu.edu.cn}

\address{$^2$ Department of Mathematics, Jinan University, Guangzhou 510632, P. R. China}
\email{thuangjf@jnu.edu.cn}

\address{$^3$ School  of Mathematics (Zhuhai), Sun Yat-sen University, Zhuhai Campus, Zhuhai, 519082,  China}
\email{mcszyl@mail.sysu.edu.cn}



\begin{abstract}
    In this paper, we study a class of population models with time-varying factors, represented by one-dimensional piecewise smooth autonomous differential equations.
    We provide several derivative formulas in ``discrete'' form for the Poincar\'{e} map of such equations, and establish a criterion for the existence of limit cycles.
    These two tools, together with the known ones, are then combined in a preliminary procedure that can provide a simple and unified way to analyze the equations.
    As an application, we prove that a general model of single species with seasonal constant-yield harvesting can only possess at most two limit cycles, which improves the work of Xiao in 2016.
    We also apply our results to a general model described by the Abel equations with periodic step function coefficients, showing that its maximum number of limit cycles, is three.
    Finally, a population suppression model for mosquitos considered by Yu \& Li in 2020 and Zheng et al. in 2021 is studied using our approach.
\end{abstract}

\date{}

\maketitle
\section{Introduction and statements of main results}
The theory of ordinary differential equations is one of the powerful tools in the study of population biology. It provides effective ways to characterize the population dynamics and make practical predictions, and therefore has been extensively applied in modeling the evolutions of various species.
See e.g. \cite{biology2,biology1} for more information on this field.
For the problems of single-species populations, the application of this theory goes back to the essay of Malthus, and has become a rapidly developing field since the \emph{logistic} form was proposed (see e.g. \cite{logistic,biology2}). In this context, a single-species model is characterized by a one-dimensional autonomous differential equation
\begin{align}\label{eq0}
\frac{dx}{dt}=f(x),
\end{align}
where $x(t)$ and $f(x(t))$ represent the population size of the species and the rate of change at time $t$, respectively. The main particularity of such a model is that it is integrable and the behaviors of the orbits can be known by analyzing the zeros of $f$. This provides an opportunity to explore the population dynamics in a relatively simple way.
So far, a large number of single-species models of the form \eqref{eq0} have been established in the literature for many real biological problems, such as insect outbreaks, yeast growth, fish weight growth, harvesting, herding behavior, the Allee effect, and organ size evolution, see for instance \cite{sprucebudworm,book,harvest,Ben,logistic} and the references therein.

It should be noted that due to the time-invariant growth rates, models of the form \eqref{eq0} may not be able to accurately capture the population growth in all stages and therefore may deviate from the reality in the long term.
Indeed, it is not uncommon for the population growth rate of a species to be stable in the short term, but to change over different periods according to environmental variations.
For this reason, a natural way to overcome this imperfection is to modify the models by a kind of non-autonomous differential equations (actually, a kind of piecewise autonomous differential equations):
\begin{equation}\label{equation0}
\begin{split}
    \frac{dx}{dt}=f(t,x)=
    \left\{
  \begin{aligned}
        &f_{1}(x),&  t\in \big[kT+T_0,kT+T_{1}\big),\ x\in I,\\
        &f_{2}(x),&  t\in \big[kT+T_{1},kT+ T_{2}\big),\ x\in I,\\
        &\quad\cdots  \quad& \quad\cdots \quad\\
        &f_{n}(x),&  t\in \big[kT+ T_{n-1},kT+T_n\big),\ x\in I,
    \end{aligned}
  \right.
\end{split}
\end{equation}
where $k\in\mathbb{Z}$,
$n\in \mathbb{Z}_{\geq 2}$, $0=T_{0}<T_{1}<T_{2}< \cdots < T_{n-1}<T_{n}=T$, $I\subseteq\mathbb R$ is an interval, and $f_{i}$ is $C^{1}$-differentiable on $I$ with at most a finite number of zeros, representing the population growth rate during the sub-period $[T_{i-1}+kT,T_{i}+kT)$, $i=1,2,\cdots,n$.

In recent decades, there are quite a few applications of equation \eqref{equation0} in modeling population dynamics with time-varying environments, especially the seasonal variations and periodic anthropogenic factors.
See e.g. \cite{population2, population1, HsuZhao, population4, population3} and the references therein.
To specialize our next studies, we summarize some of the representative works in the following.

Hsu and Zhao \cite{HsuZhao} in 2012 considered a seasonal \emph{logistic} model throughout the year, in which the population of a species grows logistically during the breeding season, but declines due to adverse conditions in the bad season. That is,
\begin{equation}\label{eq1}
\begin{split}
    \frac{dx}{dt}=
    \left\{
  \begin{aligned}
        &a_1x\left(1-\frac{x}{K}\right),&  t\in \big[kT,kT +T_{1}\big),\ x\in \mathbb R_0^+,\\
        &-a_2 x,&  t\in \big[kT+ T_{1},(k+1)T\big),\ x\in \mathbb R_0^+,
    \end{aligned}
  \right.
\end{split}
\end{equation}
where  $a_1$ and $-a_2$ represent the reproductive rates in the corresponding seasons, and $K$ is the carrying capacity.
This model is applied in \cite{squirrel1} to study the population dynamics of squirrels (as can be seen in the works, e.g. \cite{squirrel1, squirrel2}, squirrel numbers typically achieve the peaks between March and September as a result of seasonal reproduction and decrease in winter due to food shortages and mortality). Furthermore, it is known that squirrels, as well as some other species, are also subject to the Allee effect, i.e., a negative effect on population growth when the species is underpopulated or overpopulated (see e.g. \cite{book, Allee} and the references therein). In \cite{Allee}, Li and Zhao modify the model \eqref{eq1} to a generalized form that considers the Allee effect in one of the seasons:
\begin{equation}\label{eq2}
\begin{split}
    \frac{dx}{dt}=
    \left\{
  \begin{aligned}
        &a_1x\left(1-\frac{x}{K_1}\right),&  t\in \big[kT,kT+ T_{1}\big),\ x\in \mathbb R_0^+,\\
        &a_2 x\left(1-\frac{x}{K_{21}}\right)\left(\frac{x}{K_{22}}-1\right),&  t\in \big[kT+ T_{1},(k+1)T\big),\ x\in \mathbb R_0^+,
    \end{aligned}
  \right.
\end{split}
\end{equation}
where $a_1,a_2,K_1>0$ and $K_{21}>K_{22}>0$. The authors give a criterion for the uniqueness and stability of the positive periodic solution of the model.

The second series of significant research is led by Yu et al. on the sterilization of wild mosquitoes using the factory-reared adult ones with artificial triple-{\em Wolbachia} infections \cite{YULI, 2, 3, 4, 5, BOYU, 1, 6, 7, 8}.
The population suppression model for the wild mosquitoes, as formulated and systematically analyzed by Yu, Li and Zheng in their series of works \cite{YULI,5,BOYU}, is given by
\begin{equation}\label{mosquito}
    \frac{dw}{dt}=\Big(\frac{aw}{w+h}-\big(\mu+\xi(w+h)\big)\Big)w,\ \ \ t,\omega\in\mathbb R_0^+,
\end{equation}
where $w(t)$ and $h=h(t)$ represent the numbers of wild mosquitoes and sexually active sterile mosquitoes (the factory-reared ones with {\em Wolbachia} infections) at time $t$, respectively;
the parameters $a$, $\mu$ and $\xi$ are the birth rate, the density-independent death rate and the density-dependent death rate of wild mosquitoes, respectively. An additional assumption $a>\mu$ is also imposed to ensure that, the wild mosquito population stabilizes at $A:=\frac{a-\mu}{\xi}$ in the absence of sterile mosquitoes.

The mechanism for sterilizing wild mosquitoes, which was reported in \cite{1}, is to release the sexually active sterile ones to mate with.
Currently, due to the limited production capacity of the mosquito factory, the release of the sterile mosquitoes is more appropriately implemented in periodic and impulsive strategies, under which the function $h(t)$ becomes a periodic step function and therefore the model \eqref{mosquito} has the form \eqref{equation0}.
There are three key design parameters for such strategies (see e.g. \cite{YULI, BOYU, 5}):
the period waiting time $T$ between two consecutive releases, the sexual lifespan $\overline{T}$ of the sterile mosquitoes, and the constant amount $c$ of each release.
In \cite{YULI,5} Yu and Li consider model \eqref{mosquito} under the strategy of $T>\overline{T}$, where $h(t)$ is $T$-periodic, satisfying
\begin{equation}\label{eq3}
\begin{split}
    h(t)=
    \left\{
  \begin{aligned}
        &c, \quad&t\in\big[kT,kT+\overline{T}\big),\\
        &0,\quad&t\in\big[kT+\overline{T},(k+1)T\big).
    \end{aligned}
   \right.
\end{split}
\end{equation}
Later, the model under the opposite strategy $T<\overline{T}$ is studied by Zheng et al. \cite{BOYU}, where
\begin{equation}\label{eq4}
\begin{split}
    h(t)=
    \left\{
  \begin{aligned}
        &(p+1)c, \quad&t\in\big[kT,kT+q\big),\\
        &pc,\quad&t\in\big[kT+q,(k+1)T\big),
    \end{aligned}
  \right.
\end{split}
\end{equation}
with $p=[\overline{T}/T]$ and $q=\overline T-pT$.
By virtue of these above works, the dynamical behavior of wild mosquitoes under the mainstream release strategies is almost completely obtained. For details readers are referred to the summary in \cite{BOYU} (see also Application 3 in the present paper). A comparison of the effects between different release strategies based on this dynamical description is also provided in the same article.


Another representative series of studies concerns the issue of open-closed seasonal harvesting in the context of fisheries and wildlife management policies (i.e., the resource exploitation is permitted during the open season but forbidden in the closed season),
see e.g. \cite{XIAO,xu,harvest2,Han,CW} and the references therein. In \cite{xu} Xu et al. theoretically investigated for the first time the logistic model with seasonal harvesting characterized by different periodic piecewise functions. Motivated by this work and the fixed quota policy (fishing or hunting licenses), Xiao \cite{XIAO} in 2016 formulated a single-species mathematical model with seasonal constant-yield harvesting
\begin{equation}\label{xiaodongmei}
\begin{split}
    \frac{dx}{dt}=
    \left\{
  \begin{aligned}
        &g(x),&  t\in \big[kT,kT+ T_{1}\big),\ x\in \mathbb R_0^+,\\
        &g(x)-h,&  t\in \big[kT+ T_{1},(k+1)T\big),\ x\in \mathbb R_0^+,\\
  \end{aligned}
    \right.
\end{split}
\end{equation}
where $h>0$, $T>T_1>0$ and the growth rate $g(x)$ satisfies the following hypothesis:
\begin{itemize}
    \item[(H)] $g\in C^{1}(\mathbb{R})$ and there exist positive numbers $k_{0}$ and $K$, $0<k_{0}<K$ such that $g(0)=g(K)=g^{'}(k_{0})=0$, $g^{'}(x)>0$ for $x\in[0,k_{0})$ and $g^{'}(x)<0$ for $x\in(k_{0},+\infty)$.
\end{itemize}
When $g=x(1-x)$ (a typical case of (H)), the author showed that there exists a unique threshold $h_{MSY}$, the {\em maximum sustainable yield}, such that model \eqref{xiaodongmei} undergoes a saddle-node bifurcation of the periodic solution as $h$ passes through $h_{MSY}$.
Later Han et al. \cite{Han} obtained the same conclusion under the hypotheses of (H), $g\in C^2(\mathbb R)$ and $g''<0$. The dynamics of a logistic model with seasonal Michaelis-Menten type harvesting is also recently studied by Feng et al. in \cite{harvest2}.

For more relevant works, see e.g. \cite{harvest,harvest3,BB} and the references therein.
\vskip0.2cm

Let us come back to the general equation \eqref{equation0}. Stimulated by the backgrounds mentioned above, we are concerned with the theoretical study of the dynamics of the equation.
The consideration will be restricted to the equation in one period $T$ (that is, the equation with $k=0$) and its periodic solutions.
Here we briefly recall several notions. For each $i=1,\ldots,n$, equation \eqref{equation0} in the strip $[T_{i-1},T_{i}]\times\mathbb R$ has a unique solution $x=\varphi_i(t;t_{0},x_{0})$, which satisfies the initial condition $\varphi_i(t_{0};t_{0},x_{0})=x_{0}$ and is determined by the sub-equation $\frac{dx}{dt}=f_i(x)$. Taking the composition of functions into account, for each initial value $(t_0,x_0)\in[0,T]\times\mathbb R$, the maximal solution $x=x(t;t_{0},x_{0})$ of equation \eqref{equation0}, is unique and formed recursively by means of
\begin{align*}
  x(t;t_{0},x_{0})=
  \left\{
  \begin{aligned}
  &\varphi_i(t;t_{0},x_{0}), &\text{when}\  t_0, t\in[T_{i-1}, T_{i}],\\
  &\varphi_i(t;T_{i-1},x(T_{i-1};t_{0},x_{0})), &\text{when}\ t_0< T_{i-1}\ \text{and}\ t\in[T_{i-1}, T_{i}],\\
  &\varphi_i(t;T_{i},x(T_{i};t_{0},x_{0})), &\text{when}\ t_0>T_{i}\ \text{and}\ t\in[T_{i-1}, T_{i}].
  \end{aligned}
  \right.
\end{align*}
We say that $x=x(t;0,x_0)$ is periodic if it is well-defined on $[0,T]$ and satisfies $x(T;0,x_0)=x_0$. Also a periodic solution is called a {\em limit cycle} if it is isolated in the set of periodic solutions.

To our knowledge, there are several analytical approaches to the periodic solutions of equation \eqref{equation0} in the previous works. They are mainly based on either the specific expressions of $\varphi_i$'s, each obtained from the integrability of the corresponding sub-equation (see e.g. \cite{Allee,squirrel1,YULI,5,BOYU,harvest2}), or, the derivative formulas in integral form of the Poincar\'{e} map $P(x_0):=x(T;0,x_0)$ (see e.g. \cite{XIAO,Han}), which are derived from the theory of the smooth non-autonomous differential equations \cite{Lloyd}. Our purpose in this paper, is to further supplement some specialized qualitative tools for analyzing the periodic solutions of equation \eqref{equation0}, especially when the number of sub-equations $n=2$, enabling more sufficient and effective utilization of the particularities of the equation.

To achieve this, we apply Lemma \ref{lemma1} given in Section 2, which is a trivial adaptation of \cite[Lemma 3.9] {HX}, to normalize the consideration to the study of the following case:
\begin{equation}\label{equation1}
\begin{split}
    \frac{dx}{dt}=f(t,x)=
    \left\{
  \begin{aligned}
        &f_{1}(x),&  t\in \Big[0,\, \frac{1}{n}\Big),\ x\in I,\\
        &f_{2}(x),&  t\in \Big[\frac{1}{n},\, \frac{2}{n}\Big),\ x\in I,\\
        &\quad\cdots  \quad& \quad\cdots \quad\\
        &f_{n}(x),&  t\in \Big[\frac{n-1}{n},\, 1\Big],\ x\in I,
    \end{aligned}
  \right.
\end{split}
\end{equation}
i.e., equation \eqref{equation0} with $k=0$, $T=1$ and $T_i=\frac{i}{n}$, $i=0,1,\ldots,n$. We recall the assumption that $f_i$'s are $C^1$-differentiable on $I$ and only have a finite number of zeros. Then the discussion above naturally ensures the $C^1$-differentiability for the Poincar\'{e} map $P(x_0)=x(1;0,x_0)$ of equation \eqref{equation1} on its domain, denoted by $\mathcal D\subseteq I$.
Our first main result, further presents the $C^2$-differentiability and two derivative formulas for $P$ on $\mathcal D$ with an exception of a finite set.

\begin{theorem}\label{th1'}
    The Poincar\'{e} map $P(x_0)=x(1;0,x_0)$ of equation \eqref{equation1}, is $C^2$-differentiable on $V:=\big\{\rho\in \mathcal D\big|f_i\big(x(\frac{i-1}{n};0,\rho)\big)\neq0,i=1,2,\cdots,n\big\}$.
    Furthermore, for $x_0\in V$ and $i=1,2,\ldots,n$, let $x_i:=x(\frac{i}{n};0,x_0)$. Then
\begin{align}\label{12}
\begin{split}
    &P'(x_0)=\prod_{i=1}^{n}\frac{f_i(x_i)}{f_i(x_{i-1})},\\
    &P''(x_0)=P'(x_0)\sum_{i=1}^{n}\left(\frac{f^{'}_{i}(x_{i})-f^{'}_{i}(x_{i-1})}{f_{i}(x_{i-1})}\cdot \prod_{j=1}^{i-1}\frac{f_{j}(x_{j})}{f_{j}(x_{j-1})}\right).
\end{split}
\end{align}
\end{theorem}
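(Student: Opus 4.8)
The plan is to reduce everything to the single-equation case and then apply the chain rule. On each strip $[\frac{i-1}{n},\frac{i}{n}]$ the flow is governed by the autonomous equation $\frac{dx}{dt}=f_i(x)$, whose time-$\frac1n$ map I will denote $\phi_i$; thus $\phi_i(x_{i-1})=x_i$ and $P=\phi_n\circ\phi_{n-1}\circ\cdots\circ\phi_1$. The first step is to establish, for a single autonomous $C^1$ equation $\dot x=g(x)$ with $g$ having only finitely many zeros, a formula for the first derivative of its time-$\tau$ map $\phi$ at a point $y$ with $g(y)\neq 0$: namely $\phi'(y)=g(\phi(y))/g(y)$. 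This is classical and follows either from differentiating the separated-variables identity $\int_{y}^{\phi(y)}\frac{ds}{g(s)}=\tau$ with respect to $y$, or from the variational equation $\frac{d}{dt}\partial_y x = g'(x)\,\partial_y x$ together with $\partial_y x(0)=1$, which gives $\partial_y x(t)=\exp\!\big(\int_0^t g'(x(s))\,ds\big)$; then using $\frac{d}{dt}\ln|g(x(t))| = g'(x(t))$ along the orbit (valid since $g(x(t))\neq 0$ on the whole strip by the no-crossing property of autonomous orbits) one gets $\partial_y x(t) = g(x(t))/g(y)$. Evaluating at $t=\tau$ gives the claim, and simultaneously shows $\phi$ is $C^1$; the $C^2$-regularity of $\phi$ on $\{g(y)\neq0\}$ follows because $\phi'(y)=g(\phi(y))/g(y)$ is a composition/quotient of $C^1$ functions with nonvanishing denominator, hence $C^1$ in $y$. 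Applying the chain rule to $P=\phi_n\circ\cdots\circ\phi_1$ then yields immediately
\[
P'(x_0)=\prod_{i=1}^{n}\phi_i'(x_{i-1})=\prod_{i=1}^n\frac{f_i(x_i)}{f_i(x_{i-1})},
\]
and $C^2$-differentiability of $P$ on $V$, since $V$ is exactly the set where every intermediate point $x_{i-1}$ is a non-stationary point of $f_i$.

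For the second derivative, the strategy is again to compute $\phi_i''$ first and then differentiate the product formula. Writing $L_i(y):=\ln|\phi_i'(y)| = \ln|f_i(\phi_i(y))| - \ln|f_i(y)|$ and differentiating, $\frac{\phi_i''(y)}{\phi_i'(y)} = f_i'(\phi_i(y))\phi_i'(y)/f_i(\phi_i(y)) \cdot \frac{f_i(\phi_i(y))}{\text{(that's just }\phi_i'\text{)}}\ldots$ — more cleanly: $\frac{d}{dy}\ln|\phi_i'(y)| = \frac{f_i'(\phi_i(y))\,\phi_i'(y)}{f_i(\phi_i(y))} - \frac{f_i'(y)}{f_i(y)}$, and since $\phi_i'(y)=f_i(\phi_i(y))/f_i(y)$ the first term simplifies to $f_i'(\phi_i(y))/f_i(y)$, giving
\[
\frac{\phi_i''(y)}{\phi_i'(y)} = \frac{f_i'(\phi_i(y)) - f_i'(y)}{f_i(y)}.
\]
Evaluating at $y=x_{i-1}$ (so $\phi_i(y)=x_i$) produces the bracketed factor $\big(f_i'(x_i)-f_i'(x_{i-1})\big)/f_i(x_{i-1})$ appearing in \eqref{12}.

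Finally I will differentiate $P'(x_0)=\prod_{i=1}^n \phi_i'(x_{i-1})$ using $\frac{P''}{P'} = \frac{d}{dx_0}\ln|P'(x_0)| = \sum_{i=1}^n \frac{d}{dx_0}\ln|\phi_i'(x_{i-1})| = \sum_{i=1}^n \frac{\phi_i''(x_{i-1})}{\phi_i'(x_{i-1})}\cdot\frac{dx_{i-1}}{dx_0}$. The chain factor is $\frac{dx_{i-1}}{dx_0}=(\phi_{i-1}\circ\cdots\circ\phi_1)'(x_0)=\prod_{j=1}^{i-1}\phi_j'(x_{j-1})=\prod_{j=1}^{i-1}\frac{f_j(x_j)}{f_j(x_{j-1})}$, with the empty product equal to $1$ when $i=1$. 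Substituting the expression for $\phi_i''/\phi_i'$ obtained above gives exactly the stated formula for $P''(x_0)$. The main technical point to be careful about is the sign/absolute-value bookkeeping when passing through $\ln|\cdot|$: one must check that $f_i(x(t;0,x_0))$ keeps a constant sign for $t$ in the $i$-th strip (true because an autonomous orbit cannot cross an equilibrium in finite time, so $x(t)$ stays in one component of $\{f_i\neq0\}$), so that all the logarithmic derivatives are legitimate and the absolute values differentiate without extra terms; everything else is the chain rule and the elementary single-equation computation.
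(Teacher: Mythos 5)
Your proposal is correct and follows essentially the same route as the paper: decompose $P=\phi_n\circ\cdots\circ\phi_1$ into the time-$\frac1n$ maps of the sub-equations, obtain $\phi_i'(x_{i-1})=f_i(x_i)/f_i(x_{i-1})$ from the separated-variables identity (the paper differentiates $\int_{x_{i-1}}^{x_i}\frac{dx}{f_i(x)}=\frac1n$, which is one of the two derivations you offer), and then apply the chain/product rule, using that $f_i$ cannot vanish along the orbit in the $i$-th strip when $x_0\in V$. Your logarithmic-derivative bookkeeping for $P''$ reproduces exactly the paper's identity $P_i''/P_i'=\bigl(f_i'(x_i)-f_i'(x_{i-1})\bigr)/f_i(x_{i-1})$ and the factor $\frac{dx_{i-1}}{dx_0}=\prod_{j=1}^{i-1}f_j(x_j)/f_j(x_{j-1})$, so no gap remains.
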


We remark that although the solutions $x(t;0,x_0)$ of the equation with $x_0\in\mathcal D\backslash V$ are not involved in Theorem \ref{th1'}, they are relatively simple (such as the constant solutions $x(t;0,x_0)\equiv x_0$) and can be located by the finitely many zeros of $f_i$'s.

Under suitable differentiability assumption, we also obtain the expression of $ P'''$ on $V$. For the sake of brevity, it is stated in Section 3.

\vskip 0.2cm

In what follows we focus on a more concrete case of equation \eqref{equation1}, that is, the equation with $n=2$:
\begin{equation}\label{equation2}
\begin{split}
    \frac{dx}{dt}=
    \left\{
  \begin{aligned}
        &f_{1}(x),& \quad t\in \Big[0,\frac{1}{2}\Big),\ x\in I,\\
        &f_{2}(x),& \quad t\in \Big[\frac{1}{2},1\Big],\ x\in I.
    \end{aligned}
  \right.
\end{split}
\end{equation}
For this equation, we immediately obtain the following result by Theorem \ref{th1'}.
\begin{theorem}\label{coro1}
    If $x(t;0,x_{0})$ is a non-constant periodic solution of equation \eqref{equation2}, then the Poincar\'{e} map $P(x_0)=x(1;0,x_{0})$ is $C^2$-differentiable at $x_0$ and
    \begin{equation*}
        P^{'}(x_{0})=1-\frac{1}{f_{1}(x_{0})f_{2}(x_{1})}\,
        \begin{vmatrix}
            f_{1}(x_{0})&f_{2}(x_{0})\\
            f_{1}(x_{1})&f_{2}(x_{1})
        \end{vmatrix},
    \end{equation*}
    where $x_1=x(\frac{1}{2};0,x_0)$.
    Furthermore, if $P^{'}(x_{0})=1$, then
\begin{equation*}
    \begin{split}
        P^{''}(x_{0})&=\frac{1}{f_{1}(x_{0})f_{2}(x_{1})}
        \begin{vmatrix}
                f_{1}^{'}(x_{1})-f_{1}^{'}(x_{0}) & f_{2}^{'}(x_{1})-f_{2}^{'}(x_{0})\\
                f_{1}(x_{1}) & f_{2}(x_{1})
        \end{vmatrix}\\
        &=\frac{1}{f_{1}(x_{0})f_{2}(x_{0})}
        \begin{vmatrix}
                f_{1}^{'}(x_{1})-f_{1}^{'}(x_{0}) & f_{2}^{'}(x_{1})-f_{2}^{'}(x_{0})\\
                f_{1}(x_{0}) & f_{2}(x_{0})
            \end{vmatrix}.
    \end{split}
\end{equation*}
\end{theorem}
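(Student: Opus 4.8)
The plan is to obtain Theorem \ref{coro1} as the special case $n=2$ of Theorem \ref{th1'}; the only point needing a short independent argument is that a non-constant periodic solution automatically starts in the set $V$, so that Theorem \ref{th1'} is applicable.

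First I would show that if $x(t;0,x_0)$ is a non-constant periodic solution of \eqref{equation2}, with $x_1:=x(\tfrac12;0,x_0)$, then $f_1(x_0)\neq 0$ and $f_2(x_1)\neq 0$, i.e.\ $x_0\in V$. On each of the two subintervals the solution solves a scalar autonomous equation, so it is strictly monotone where the relevant $f_i$ does not vanish and is constant (sitting at an equilibrium) otherwise. If $f_1(x_0)=0$, then $x\equiv x_0$ on $[0,\tfrac12]$, hence $x_1=x_0$; on $[\tfrac12,1]$ the solution is driven by $f_2$ from $x_0$, and the periodicity condition $x(1;0,x_0)=x_0$ together with strict monotonicity forces $f_2(x_0)=0$, which makes the whole solution constant — a contradiction. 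The assumption $f_2(x_1)=0$ leads to a contradiction in the same way. Hence $x_0\in V$, and Theorem \ref{th1'} yields the $C^2$-differentiability of $P$ at $x_0$ together with the formulas \eqref{12} for $n=2$; note also that $x_2:=x(1;0,x_0)=x_0$.

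Next I would substitute $n=2$ and $x_2=x_0$ into \eqref{12}. This gives $P'(x_0)=\dfrac{f_1(x_1)}{f_1(x_0)}\cdot\dfrac{f_2(x_0)}{f_2(x_1)}=\dfrac{f_1(x_1)f_2(x_0)}{f_1(x_0)f_2(x_1)}$, and rewriting the numerator as $f_1(x_0)f_2(x_1)-\bigl(f_1(x_0)f_2(x_1)-f_1(x_1)f_2(x_0)\bigr)$, with the parenthesis recognized as the $2\times 2$ determinant in the statement, produces the first asserted formula. For $P''$, the sum in \eqref{12} consists only of the terms $i=1$ (empty product $=1$) and $i=2$; after replacing $x_2$ by $x_0$ it reads
\[
P''(x_0)=P'(x_0)\left(\frac{f_1'(x_1)-f_1'(x_0)}{f_1(x_0)}+\frac{f_2'(x_0)-f_2'(x_1)}{f_2(x_1)}\cdot\frac{f_1(x_1)}{f_1(x_0)}\right),
\]
and the bracket equals $\frac{1}{f_1(x_0)f_2(x_1)}$ times the first determinant in the statement (expand that determinant along its second row to verify).

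Finally, assuming $P'(x_0)=1$, the prefactor $P'(x_0)$ disappears, giving the first determinant formula for $P''(x_0)$. Moreover $P'(x_0)=1$ means $f_1(x_1)f_2(x_0)=f_1(x_0)f_2(x_1)$, i.e.\ the vectors $\bigl(f_1(x_1),f_2(x_1)\bigr)$ and $\bigl(f_1(x_0),f_2(x_0)\bigr)$ are proportional; using the multilinearity of the determinant in its second row together with this proportionality converts the first form into the second. I do not anticipate any genuine difficulty here: apart from the membership $x_0\in V$, which is exactly the monotonicity dichotomy above, everything is bookkeeping with \eqref{12} and $2\times 2$ determinants.
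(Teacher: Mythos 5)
Your proposal is correct and follows essentially the same route as the paper: specialize Theorem \ref{th1'} to $n=2$, use $x_2=x_0$, and carry out the $2\times2$ determinant bookkeeping, with the second form of $P''$ obtained from the proportionality $\frac{f_1(x_1)}{f_1(x_0)}=\frac{f_2(x_1)}{f_2(x_0)}$ forced by $P'(x_0)=1$. The only difference is that you spell out the monotonicity argument showing $x_0\in V$, which the paper asserts without detail; that is a welcome but not substantively different addition.
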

Next, using a combination of the above results and additional analysis, we can characterize the regions in which equation \eqref{equation2} may admit periodic solutions, in terms of the functions $f_1, f_2$ and $f_1+f_2$.

\begin{theorem}\label{th2}
    Suppose that for equation \eqref{equation2}, the zeros (if any) of $f_1+f_2$ on $I$ are all isolated. Let $I_{1},I_{2},\cdots,I_{m}$ be the non-intersecting open intervals given by all the connected components of $I\backslash\{x|f_1(x)f_2(x)=0\}$, where $m=1+{\rm Card}\{x|f_1(x)f_2(x)=0\}$. The following statements hold.
\begin{itemize}
  \item[(i)] Suppose that $x=x(t;0,x_0)$ is a periodic solution of equation \eqref{equation2}.
  \begin{itemize}
  \item[(i.1)] If $x(t;0,x_0)\equiv x_0$, then $x_0$ is a common zero of $f_{1}$ and $f_{2}$  (i.e., $f_1(x_0)=f_2(x_0)=0$).
  \item[(i.2)] If $x(t;0,x_0)\not\equiv x_0$, then the solution is located in the region $\big\{(t,x)\big|t\in [0,1], x\in \bigcup_{i=1}^{m}I_{i}\big\}$ (i.e., $[0,1]\times \bigcup_{i=1}^{m}I_{i}$). Furthermore, $f_1+f_2$ is sign-changing on the set $\{x(t;0,x_{0})|t\in [0,1]\}$.
  \end{itemize}
  \item[(ii)] Suppose that $f_{1}+f_{2}$ changes sign exactly once on an open interval $E\subseteq\bigcup^{m}_{i=1} I_{i}$. Then equation \eqref{equation2} has at most one periodic solution in the region $\{(t,x)|t\in [0,1], x\in E\}$ (i.e., $[0,1]\times E$). Furthermore, if such a periodic solution exists and the number of zeros of $f_{1}+f_{2}$ on $E$ is exactly one, then the solution is hyperbolic.
\end{itemize}

\end{theorem}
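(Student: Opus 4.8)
The plan is to take the three items in order, the structural description of periodic orbits obtained while proving (i.2) being the engine for (ii). Item (i.1) is immediate: if $x(t;0,x_0)\equiv x_0$ then $\frac{dx}{dt}\equiv0$ on $[0,1]$, which on $[0,\frac12)$ forces $f_1(x_0)=0$ and on $[\frac12,1]$ forces $f_2(x_0)=0$. For item (i.2) the core is the claim that a \emph{non-constant} periodic solution $x(t;0,x_0)$ is strictly monotone on each of $[0,\frac12]$ and $[\frac12,1]$, and its range $R:=\{x(t;0,x_0):t\in[0,1]\}$ is a non-degenerate closed interval on which $f_1f_2$ has no zero. I would prove it by first excluding $f_1(x_0)=0$: otherwise uniqueness for $\frac{dx}{dt}=f_1(x)$ gives $x\equiv x_0$ on $[0,\frac12]$, hence $x(\frac12;0,x_0)=x_0$; then on $[\frac12,1]$ the solution follows $\frac{dx}{dt}=f_2(x)$ from $x_0$, non-constancy forces $f_2(x_0)\neq0$, so it is strictly monotone there and $x(1;0,x_0)\neq x_0$, against periodicity. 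With $f_1(x_0)\neq0$, the solution on $[0,\frac12]$ stays in the connected component of $\{f_1\neq0\}$ through $x_0$ (it cannot reach a zero of the $C^1$ function $f_1$ in finite time), hence is strictly monotone, $x_1:=x(\frac12;0,x_0)\neq x_0$, and $f_1\neq0$ on the closed interval between $x_0$ and $x_1$. The same argument on $[\frac12,1]$ — note $f_2(x_1)\neq0$, since otherwise $x(1;0,x_0)=x_1\neq x_0$ — shows the solution is strictly monotone there, returns to $x_0$, and $f_2\neq0$ on that same interval. Thus $R$ is the closed interval between $x_0$ and $x_1$, with $f_1f_2\neq0$ on it, so $R\subseteq I_i$ for some $i$; this gives the first assertion of (i.2).

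For the sign statement, the monotonicity on the two subintervals together with periodicity forces $f_1$ and $f_2$ to have opposite (constant, nonzero) signs on $R$, so $f_1f_2<0$ there; separating variables on $[0,\frac12]$ and $[\frac12,1]$ and using $x(1;0,x_0)=x_0$ yields $\int_R\frac{dx}{|f_1(x)|}=\int_R\frac{dx}{|f_2(x)|}=\frac12$. If $f_1+f_2$ did not change sign on $R$, then — using that $f_1,f_2$ are of opposite signs on $R$ and $f_1+f_2\not\equiv0$ on the subinterval $R$ (the isolatedness hypothesis) — one of $|f_1|,|f_2|$ would be $\le$ the other throughout $R$, strictly on a nonempty subinterval, making these two integrals unequal; contradiction. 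Hence $f_1+f_2$ is sign-changing on $R=\{x(t;0,x_0):t\in[0,1]\}$.

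For item (ii): $E$ is a subinterval of the pairwise-disjoint union $\bigcup_iI_i$, hence lies in one $I_i$, so $f_1,f_2$ have constant signs on $E$; since $f_1+f_2$ changes sign these signs are opposite, and (replacing $(f_1(x),f_2(x),x)$ by $(-f_1(-x),-f_2(-x),-x)$ if necessary) we may assume $f_1>0>f_2$ on $E$. Put $\Delta:=\frac1{f_1}+\frac1{f_2}=\frac{f_1+f_2}{f_1f_2}$, so $\operatorname{sign}\Delta=-\operatorname{sign}(f_1+f_2)$ on $E$, and let $\eta\in E$ be the (unique) sign-change point of $f_1+f_2$. A periodic solution inside $[0,1]\times E$ is non-constant (no common zero of $f_1,f_2$ in $E$), so by the structural description above its range is an interval $[\alpha,\beta]\subseteq E$; since $f_1>0>f_2$ it increases on $[0,\frac12]$ from $\alpha$ to $\beta$ and decreases back on $[\frac12,1]$, so $\int_\alpha^\beta\Delta\,dx=\int_\alpha^\beta\frac{dx}{f_1}-\int_\alpha^\beta\frac{dx}{-f_2}=0$, and $f_1+f_2$ changes sign on $[\alpha,\beta]$, hence $\alpha<\eta<\beta$. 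If there were two such solutions with ranges $[\alpha,\beta]$ and $[\alpha',\beta']$, then — their starting points being $\alpha$ and $\alpha'$, with $\beta=\varphi_1(\frac12;0,\alpha)$, $\beta'=\varphi_1(\frac12;0,\alpha')$ and $\varphi_1(\frac12;0,\cdot)$ strictly increasing — we may take $\alpha<\alpha'$, whence $\beta<\beta'$; with $\alpha'<\eta<\beta$ this forces $\alpha<\alpha'<\eta<\beta<\beta'$. Cancelling $\int_{\alpha'}^\beta\Delta\,dx$ in $\int_\alpha^\beta\Delta\,dx=\int_{\alpha'}^{\beta'}\Delta\,dx=0$ gives $\int_\alpha^{\alpha'}\Delta\,dx=\int_\beta^{\beta'}\Delta\,dx$; but $(\alpha,\alpha')$ and $(\beta,\beta')$ lie on opposite sides of $\eta$, where $f_1+f_2$ — hence $\Delta$ — has opposite signs and is not identically zero, so these integrals are nonzero of opposite signs, a contradiction. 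Thus $[0,1]\times E$ carries at most one periodic solution.

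For hyperbolicity, assume in addition that $f_1+f_2$ has exactly one zero on $E$ (necessarily $\eta$); then for a periodic solution with range $[\alpha,\beta]$, for which $\alpha<\eta<\beta$, the numbers $(f_1+f_2)(\alpha)$ and $(f_1+f_2)(\beta)$ are nonzero of opposite signs, i.e. $\frac{f_1(\alpha)}{-f_2(\alpha)}$ and $\frac{f_1(\beta)}{-f_2(\beta)}$ lie strictly on opposite sides of $1$ and so differ. By Theorem \ref{coro1} (equivalently \eqref{12}) its starting point $\alpha$ satisfies $P'(\alpha)=1-\frac{1}{f_1(\alpha)f_2(\beta)}\bigl(f_1(\alpha)f_2(\beta)-f_1(\beta)f_2(\alpha)\bigr)=\frac{f_1(\beta)f_2(\alpha)}{f_1(\alpha)f_2(\beta)}$, and $P'(\alpha)=1$ would force $\frac{f_1(\alpha)}{-f_2(\alpha)}=\frac{f_1(\beta)}{-f_2(\beta)}$; hence $P'(\alpha)\neq1$, i.e. the solution is hyperbolic. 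The main obstacle is the structural claim in (i.2): extracting the monotonicity on each half-period and the avoidance of the zeros of $f_1f_2$ from ODE uniqueness together with the periodicity constraint — this needs a careful case analysis around whether $x_0$ or $x_1$ is a zero of $f_1$ or $f_2$. Once it is in place, the sign statement in (i.2), the uniqueness in (ii), and the hyperbolicity all follow from the single identity $\int_R\bigl(\frac1{f_1}+\frac1{f_2}\bigr)dx=0$, the cancellation $\int_\alpha^{\alpha'}\Delta\,dx=\int_\beta^{\beta'}\Delta\,dx$, and elementary sign bookkeeping around $\eta$.
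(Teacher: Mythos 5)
Your proof is correct. Parts (i.1), (i.2) and the hyperbolicity claim in (ii) coincide with the paper's argument in all essentials: the paper likewise reduces the sign-change assertion of (i.2) to the identity $\int_{x_0}^{x_1}\big(\tfrac{1}{f_1}+\tfrac{1}{f_2}\big)dx=\tfrac12-\tfrac12=0$ for a non-constant periodic orbit (your ``$|f_1|$ versus $|f_2|$'' comparison is a rephrasing of the paper's ``$\int H\,dx\neq 0$ if $f_1+f_2$ keeps a sign''), and it likewise reads hyperbolicity off $P'(x_0)=\frac{f_1(x_1)f_2(x_0)}{f_1(x_0)f_2(x_1)}$ from Theorem \ref{coro1} together with the fact that $(f_1+f_2)(x_0)$ and $(f_1+f_2)(x_1)$ are nonzero of opposite signs. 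Where you genuinely deviate is the ``at most one'' part of (ii): the paper sets $F(x_0)=\int_{x_0}^{x_1}H\,dx$, invokes Theorem \ref{th1'} to compute $F'(x_0)$ in closed form, and shows that $F'$ keeps a fixed sign without vanishing identically on $[p_0,q_0]$ thanks to the ordering $p_0<x_0<q_0<a<p_1<x_1<q_1$, so that $F(p_0)\ne F(q_0)$ contradicts $F(p_0)=F(q_0)=0$; you instead subtract the two vanishing integrals directly to obtain $\int_\alpha^{\alpha'}\Delta\,dx=\int_\beta^{\beta'}\Delta\,dx$ with $(\alpha,\alpha')$ and $(\beta,\beta')$ on opposite sides of $\eta$, where $\Delta$ has opposite, not identically vanishing, signs. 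Both arguments hinge on exactly the same ordering and the same function $H=\Delta$, but yours dispenses with the derivative formula altogether and is therefore a bit more elementary and self-contained, whereas the paper's computation of $F'$ is essentially the same expression it then recycles for the hyperbolicity statement. The isolated-zeros hypothesis is used in the same two places in both proofs (to guarantee $f_1+f_2\not\equiv0$ on the relevant subintervals), and your careful justification of the monotonicity and avoidance-of-zeros structure of a non-constant periodic solution in (i.2) is precisely the content the paper compresses into the single word ``Clearly''.
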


Theorems \ref{coro1} and \ref{th2} in fact provide a preliminary procedure to explore the non-existence, the existence and the number of limit cycles of equation \eqref{equation2}. The procedure is outlined below in two steps and will be used repeatedly later on in this paper:

\vskip0.1cm
\begin{enumerate}
[\textbf{Step 1:}] \item Determine all the common zeros of $f_{1}$ and $f_{2}$ (the constant limit cycles of equation \eqref{equation2}). Also, divide $I$ into the open intervals $I_{i}$'s by the zeros of $f_{1}$ and $f_{2}$.
\end{enumerate}
\begin{enumerate}
[\textbf{Step 2:}] \item Determine the number of sign changes $s_{I_i}$ of $f_{1}+f_{2}$ on each interval $I_{i}$.
\begin{itemize}
\item[$\bullet$] If $s_{I_i}=0$, no limit cycles of equation \eqref{equation2} exist in the region $[0,1]\times I_{i}$ (by Theorem \ref{th2}).

\item[$\bullet$] If $s_{I_i}=1$, at most one limit cycle of equation \eqref{equation2} exists in the region $[0,1]\times I_{i}$ (by Theorem \ref{th2}).

\item[$\bullet$] If $s_{I_i}\geq2$, analyze the stability and the multiplicity of the limit cycles that could exist in the region $[0,1]\times I_{i}$ (using the derivative formulas of the Poincar\'{e} map from Theorem \ref{coro1} and the ones previously established). To estimate the number of limit cycles of the equation in this case, further analysis such as monotonicity and bifurcation analysis could be applied.
\end{itemize}
\end{enumerate}
\vskip0.1cm


Here we illustrate three applications that show the effectiveness of our results in studying the limit cycles of some concrete population models with time-varying factors. All of these models are derived from the real-world problems introduced in the previous backgrounds. 

\subsection*{Application 1}
The first application is on the global dynamics of the seasonal harvesting model \eqref{xiaodongmei} under the hypothesis (H) proposed by  Xiao \cite{XIAO}.
We recall that for the case where the growth rate $g=x(1-x)$, Xiao proved in the article \cite[Theorem 3.3] {XIAO} that model \eqref{xiaodongmei} has at most two positive limit cycles and exhibits a saddle-node bifurcation with respect to the parameter $h$. Han et al. \cite[Theorem 3.6] {Han} obtained the same conclusion under the hypothesis (H) with the addition of $g\in C^2(\mathbb R)$ and $g''<0$.

Here we go further and give the global dynamics of model \eqref{xiaodongmei}, which depends only on hypothesis (H).
We believe that this result could also be practical because there are indeed some models where the growth rate functions satisfy (H) and are $C^2$-differentiable, but with the second derivatives changing signs on $[0,\infty)$ (see e.g. the population models with the Von Bertalanffy's growth rate, the hyper-logistic growth rate and the generic growth rate in \cite{logistic,TBKP,B}).




\begin{theorem}\label{application1}
    Under hypothesis $\mathrm{(H)}$, equation \eqref{xiaodongmei} has at most two limit cycles, taking into account their multiplicities. Furthermore, there exists a threshold $h_{MSY}\in \big(h^*, \frac{T}{T-T_1}h^*\big)$, where $h^*=g(k_{0})$, such that
    \begin{itemize}
        \item[(i)] Equation \eqref{xiaodongmei} has exactly two limit cycles if $0<h< h_{MSY}$, where the lower one is unstable and the upper one is stable;
        \item[(ii)] Equation \eqref{xiaodongmei} has a unique semi-stable limit cycle if $h=h_{MSY}$;
        \item[(iii)] Equation \eqref{xiaodongmei} has no limit cycles if $h> h_{MSY}$.
    \end{itemize}
\end{theorem}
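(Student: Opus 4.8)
The plan is to run the two-step procedure on \eqref{xiaodongmei} and then carry out a saddle-node analysis in the one nontrivial regime. First I would apply Lemma~\ref{lemma1}, rescaling time on each sub-interval, to bring \eqref{xiaodongmei} to the form \eqref{equation2} with $f_1=2T_1g$ and $f_2=2(T-T_1)(g-h)$, so that $f_1+f_2=2T\big(g-\lambda\big)$ with $\lambda:=\frac{T-T_1}{T}h\in(0,h)$ and with limit cycles in correspondence. By (H), $g$ is unimodal, $g(0)=g(K)=0$, $g>0$ on $(0,K)$, $g<0$ off $[0,K]$, and $\max g=h^*=g(k_0)$. Step~1 is immediate: $f_1,f_2$ share no zero (no point satisfies $g=0$ and $g=h$), so there is no constant limit cycle, and every non-constant periodic orbit lies in $[0,1]\times(0,K)$. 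For Step~2 I would split according to $\lambda$ versus $h^*$: (a) if $\lambda\ge h^*$ (i.e.\ $h\ge\frac{T}{T-T_1}h^*$), then $f_1+f_2\le0$ without sign change, so Theorem~\ref{th2}(i) gives no limit cycle; (b) if $0<h\le h^*$, the zeros $\alpha_h\le k_0\le\beta_h$ of $f_2$ split $(0,K)$, and since $\lambda<h$ and $g$ is unimodal one checks $\alpha_\lambda<\alpha_h$ and $\beta_\lambda>\beta_h$ for the zeros $\alpha_\lambda<k_0<\beta_\lambda$ of $f_1+f_2$; thus $f_1+f_2$ changes sign exactly once on each of $(0,\alpha_h)$ and $(\beta_h,K)$ and nowhere else, and Theorem~\ref{th2}(ii) yields at most one hyperbolic limit cycle in each, hence at most two.

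The crux is the regime $h^*<h<\frac{T}{T-T_1}h^*$ ($\lambda\in(0,h^*)$), where $(0,K)$ is a single interval on which $f_1+f_2$ changes sign twice; here the procedure calls for Theorem~\ref{coro1}. In this regime $g-h<0$ identically while $g>0$ on $(0,K)$, so a non-constant periodic orbit has strictly increasing first phase and strictly decreasing second phase, giving $x_0<x_1:=x(\tfrac12;0,x_0)$ in $(0,K)$. Substituting $f_1=2T_1g$, $f_2=2(T-T_1)(g-h)$ into Theorem~\ref{coro1} I get, at a fixed point $x_0$,
\[
P'(x_0)-1=\frac{h\,\big(g(x_0)-g(x_1)\big)}{g(x_0)\,\big(g(x_1)-h\big)},\qquad
P''(x_0)=\frac{-\,h\,\big(g'(x_1)-g'(x_0)\big)}{g(x_0)\,\big(g(x_0)-h\big)}\quad\text{when }P'(x_0)=1.
\]
Since $g(x_0)>0>g(x_1)-h$, the sign of $P'(x_0)-1$ is that of $g(x_1)-g(x_0)=g(\phi(x_0))-g(x_0)$, where $\phi$ is the (time-$\tfrac12$, $h$-independent) flow map of $\dot x=f_1$; and $P'(x_0)=1$ forces $g(x_0)=g(x_1)$, hence $x_0<k_0<x_1$, hence $g'(x_0)>0>g'(x_1)$, hence $P''(x_0)<0$. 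Since $\phi$ is an increasing $C^2$ diffeomorphism fixing $0$ and $K$ and $g$ is unimodal, the map $x_0\mapsto g(\phi(x_0))-g(x_0)$ is positive near $0$, negative near $K$, and changes sign exactly once (reduce to monotonicity of $v\mapsto\phi(a_v)-b_v$, where $a_v<k_0<b_v$ are the two preimages of $v$ under $g$). Hence along any increasing list of fixed points the sign of $P'-1$ changes at most once; as a simple zero of $P-\mathrm{id}$ has the sign of $P'-1$ dictated by the sign change of $P-\mathrm{id}$, and a double zero is — by $P''<0$ — a one-sided tangency that cannot be flanked by a further zero, $P(x)-x$ has at most two zeros in $(0,K)$ with multiplicity. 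Together with (a) and (b) this proves the ``at most two limit cycles'' claim.

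For existence and the threshold, raising $h$ only lowers the second-phase velocity, so $H(x_0,h):=P(x_0;h)-x_0$ is strictly decreasing in $h$; hence the set of $h>0$ for which \eqref{xiaodongmei} has a limit cycle is a nonempty interval with $0$ as an endpoint. For $0<h\le h^*$ a sign count at the ends of $(0,\alpha_h)$ and $(\beta_h,K)$ — the orbit from a point near $0$ going extinct, $P(\alpha_h)>\alpha_h$, $P(\beta_h)>\beta_h$, $P(K)<K$ — gives, via the intermediate value theorem and Theorem~\ref{th2}(ii), exactly two limit cycles, the lower with $P'>1$ (unstable), the upper with $P'<1$ (stable); being hyperbolic they persist for $h$ slightly larger, so $h_{MSY}:=\sup\{h:\text{\eqref{xiaodongmei} has a limit cycle}\}$ obeys $h^*<h_{MSY}<\frac{T}{T-T_1}h^*$ (the upper bound from (a)), and a compactness argument produces a limit cycle at $h=h_{MSY}$ itself. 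For $h^*<h<h_{MSY}$ the number of zeros of $P-\mathrm{id}$ with multiplicity on the admissible part of $(0,K)$ is even (its ends have $P-\mathrm{id}<0$), hence equals $2$; a double zero there would unfold as a saddle-node (using $P''<0$ and $\partial_hP<0$) and force $h=h_{MSY}$, a contradiction, so both zeros are hyperbolic and the sign pattern makes the lower one unstable, the upper one stable. At $h=h_{MSY}$ the unique limit cycle must be the double zero — a simple one would persist past $h_{MSY}$ — and $P'=1$, $P''<0$ make it semistable; for $h>h_{MSY}$ there is none. This is (i)--(iii).

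I expect the main obstacle to be the merged-interval case: establishing that $g\circ\phi-g$ has a single sign change and converting the resulting sign information on $P'-1$ into ``at most two zeros with multiplicity'', together with the saddle-node unfolding at $h_{MSY}$. A secondary technical point is the careful description of the domain $\mathcal D$ of $P$ in this regime — solutions starting too low reach $0$ within one period — but this affects only the boundary data used in the existence step, not the number of limit cycles.
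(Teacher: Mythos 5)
Your proposal is correct and follows essentially the same route as the paper: normalize via Lemma \ref{lemma1}, run the two-step sign analysis of $f_1+f_2$ to settle the regimes $0<h\le h^*$ and $h\ge\frac{T}{T-T_1}h^*$, use the discrete formulas of Theorem \ref{coro1} in the middle regime to show $P'-1$ is governed by $g(x_1)-g(x_0)$ (which changes sign once, with $P''<0$ at any tangency), and then exploit monotone dependence on $h$ for the threshold. The only cosmetic differences are that you prove the single sign change of $g\circ\phi-g$ via the preimage map $v\mapsto\phi(a_v)-b_v$ rather than the paper's three-case analysis around the point $x_*$ with $x(\tfrac12;0,x_*)=k_0$, and that you inline the monotonicity-in-$h$, intermediate-value and compactness arguments where the paper instead invokes the rotated-equations machinery of Lemma \ref{property of rotated}.
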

 The proof of Theorem \ref{application1} is given in Section 4.


\subsection*{Application 2}
The second application is on the model defined by the equation
\begin{align}\label{main equation}
\begin{split}
    \frac{dx}{dt}
    =
    \left\{
  \begin{aligned}
         &a_{1}x^3+b_{1}x^2+c_{1}x,&  t\in \big[kT,kT+ T_{1}\big),\ x\in\mathbb R,\\
         &a_{2}x^3+b_{2}x^2+c_{2}x,&  t\in \big[kT+ T_{1},(k+1)T\big),\ x\in\mathbb R,
    \end{aligned}
  \right.
\end{split}
\end{align}
where $a_i, b_i, c_i\in\mathbb R$, $i=1,2$, and $T>T_1>0$.
  It is clear that equation \eqref{main equation} has several seasonal population models as special cases, such as the mentioned model \eqref{eq1}, model \eqref{eq2}, and the model for the species exhibiting the Allee effect in each season
\begin{align*}
    &\frac{dx}{dt}=
    \left\{
    \begin{aligned}
        &a_{1}x\left(1-\frac{x}{K_{11}}\right)\left(\frac{x}{K_{12}}-1\right),&  t\in \big[kT,kT+ T_{1}\big),\ x\in\mathbb R_0^+,\\
        &a_{2}x\left(1-\frac{x}{K_{21}}\right)\left(\frac{x}{K_{22}}-1\right),&  t\in \big[kT+ T_{1},(k+1)T\big),\ x\in\mathbb R_0^+,
    \end{aligned}
    \right.
\end{align*}
where $a_{i}>0$ and $K_{i1}>K_{i2}>0$, $i=1,2$. Moreover, the mosquito population suppression model \eqref{mosquito}, with the release strategy that satisfies \eqref{eq3}, is written in the form \eqref{main equation} under a simple Cherkas' transformation $x=\frac{\omega}{\omega+c}$ (see \cite{Cherkas} and Remark \ref{rem}).

We remark that equation \eqref{main equation} is of the form $\frac{dx}{dt}=a(t)x^3+b(t)x^{2}+c(t)x$,
which is called {\em Abel equation} of the first kind (usually, Abel equation for short). There has been a long list of works (over three hundred) applying and studying Abel equations in the literature. For more applications of the equations to real problems, the reader is referred to the reductions of the SIR model \cite{Har}, the reaction-diffusion model describing the evolution of glioblastomas \cite{Har2}, the model arising in a tracking control problem \cite{control}, etc.
In the area of qualitative theory, the Abel equations also play an important role in the study of the Hilbert's 16th problem for some planar differential systems, see e.g. \cite{Lloyd,Lloyd2,Llibre,openproblem} and the references therein. A natural problem in these contexts, sometimes referred to as the Pugh's problem, asks for the estimates of the number of limit cycles of Abel equations (see e.g. \cite{Lins,Panov1,Llibre,Ben,Pliss}).

The complexity of the Pugh's problem was first known in the significant article \cite{Lins} by Lins-Neto, where the author showed that the general Abel equations have no uniform upper bound for the number of limit cycles without additional restrictions. So far, the criteria of bounding the number of limit cycles of the equations, mainly require the fixed sign hypotheses for the linear combinations of the coefficients $a(t),b(t)$ and $c(t)$, see e.g. \cite{Lins,Llibre,openproblem,HX,HL}.
One of the best known, presented in \cite{Lins,Llibre}, states that the smooth Abel equations have at most three limit cycles when $a(t)$ does not change sign. This criterion actually applies to equation \eqref{main equation} as well, where the hypothesis is correspondingly written as $a_1a_2>0$, see \cite{HX}. To the best of our knowledge, this is currently the only estimate for equation \eqref{main equation}.

In view of the previous works for the Pugh's problem, one notes that the established tools depend on the solutions of the Abel equations over the whole period. This results in the main challenge of the analysis because such solutions are complicated and typically have unknown expressions.
As will be seen in Section 4, our specialized approach to equation \eqref{main equation} utilizes the relationship between the values of the solutions at two points $t=0$ and $t=T_1$, which is a bit more feasible and enables us to provide an answer to the Pugh's problem for the equation:
\begin{theorem}\label{main theorem}
    Equation \eqref{main equation} has at most three limit cycles, taking into account their multiplicities. This upper bound is sharp.
\end{theorem}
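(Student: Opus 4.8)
The plan is to bring \eqref{main equation} to the normalized form \eqref{equation2} and then run the Step~1--Step~2 procedure, the guiding observation being that after normalization $f_{1}+f_{2}$ is a polynomial of degree at most $3$, hence has at most three real zeros counted with multiplicity; the whole proof amounts to showing that this ``zero budget'' of $f_{1}+f_{2}$ dominates the total multiplicity of the limit cycles. First I would apply Lemma \ref{lemma1}, noting that the time rescaling only multiplies $f_{1}$ and $f_{2}$ by positive constants, so each $f_{i}$ keeps the form $f_{i}(x)=xQ_{i}(x)$ with $\deg Q_{i}\le 2$; in particular $x=0$ is always a common zero of $f_{1}$ and $f_{2}$. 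Then I would dispose of the degenerate cases: if $f_{1}+f_{2}\equiv 0$ the Poincar\'e map is the identity and there are no limit cycles, and if $f_{1}\equiv 0$, $f_{2}\equiv 0$, or $f_{1},f_{2}$ are positively proportional, the equation is essentially autonomous and its limit cycles, counted with multiplicity, are the zeros of a cubic, hence at most three.

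In the remaining case I would carry out Steps~1 and~2. Every common zero $z$ of $f_{1},f_{2}$ (that is, every constant limit cycle) is a zero of $f_{1}+f_{2}$, and a direct local analysis of the composition of the two autonomous time-$\tfrac12$ maps near an equilibrium shows that $P'(z)=e^{(f_{1}+f_{2})'(z)/2}$ and, more generally, that the multiplicity of the constant limit cycle at $z$ never exceeds the multiplicity of $z$ as a zero of $f_{1}+f_{2}$. Splitting $\mathbb{R}$ into the intervals $I_{1},\dots,I_{m}$ cut out by the zeros of $f_{1}f_{2}$, the zero $x=0$ is always an endpoint and so lies in no $I_{i}$; hence $\sum_{i}s_{I_{i}}\le 2$, and either (a) every $s_{I_{i}}\le 1$, or (b) exactly one interval $I_{j}$ has $s_{I_{j}}=2$. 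In case (a), Theorem \ref{th2}(ii) bounds the non-constant limit cycles by the sign-changes, and since the zeros of $f_{1}+f_{2}$ used by the non-constant limit cycles (interior to the $I_{i}$) are disjoint from those used by the constant ones (at the endpoints), a counting of the at most three zeros of $f_{1}+f_{2}$ with multiplicity yields at most three limit cycles in all. In case (b) one has $f_{1}+f_{2}=\lambda x(x-\alpha)(x-\beta)$ with $\alpha,\beta\in I_{j}$ simple and of the same sign, so $x=0$ is the only constant limit cycle and it is hyperbolic, there are no non-constant limit cycles outside $I_{j}$ (where $f_{1}+f_{2}$ has constant sign), and the proof comes down to showing that \eqref{equation2} has at most two limit cycles with multiplicity in $[0,1]\times I_{j}$.

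This last claim is the heart of the matter and is not covered by Theorem \ref{th2}; it is here that the low degree of $f_{1},f_{2}$ is essential. On $I_{j}$ the functions $f_{1},f_{2}$ have opposite constant signs, say $f_{1}>0>f_{2}$, and in the coordinate $u=\int^{x}ds/f_{1}(s)$ the periodic solutions lying in $I_{j}$ correspond to the zeros of $u_{0}\mapsto\int_{u_{0}}^{u_{0}+1/2}\big(-f_{1}/f_{2}-1\big)\,ds$, whose integrand has the same sign as $f_{1}+f_{2}$, while the monotonicity of $f_{1}/f_{2}$ is governed by the Wronskian-type polynomial $R(x)=Q_{1}'(x)Q_{2}(x)-Q_{1}(x)Q_{2}'(x)$, which is \emph{quadratic} because $(f_{1}'f_{2}-f_{1}f_{2}')(x)=x^{2}R(x)$. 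I would use this, or equivalently the constant sign of $P'''$ on $I_{j}$ coming from the third-derivative formula stated in Section~3, together with the behaviour of the displacement function $P(x)-x$ at the endpoints of $I_{j}$ --- which are zeros of exactly one of $f_{1},f_{2}$ and at which $P(x)-x$ takes a definite nonzero sign --- to pin the number of zeros of $P(x)-x$ on $I_{j}$ down to two. The delicate point I expect to fight with is ruling out a third zero when $R$ changes sign twice on $I_{j}$: there the constant sign of $P'''$ alone only gives the bound three, and one must combine it with the parity and boundary information to improve it to two.

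For sharpness I would exhibit an explicit choice of $a_{i},b_{i},c_{i}$ and $0<T_{1}<T$ with three hyperbolic limit cycles: take $f_{1}+f_{2}$ with three simple real zeros $\alpha<0<\beta$, so that $x=0$ is a hyperbolic limit cycle and, by Theorem \ref{th2}(ii), at most one limit cycle sits in the component of $\mathbb{R}\setminus\{x\,|\,f_{1}f_{2}=0\}$ around $\alpha$ and at most one around $\beta$; arranging $f_{1},f_{2}$ to have opposite signs on these two components and $P(x)-x$ to change sign on each of them then forces a (necessarily hyperbolic) limit cycle in each, three in all. Alternatively, sharpness already follows from the classical fact that smooth Abel equations with $a(t)$ of one sign attain three limit cycles, which one may realize (or approximate) by step-function coefficients, as recalled for \eqref{main equation} with $a_{1}a_{2}>0$ in \cite{HX}.
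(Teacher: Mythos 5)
Your overall architecture matches the paper's: normalize via Lemma \ref{lemma1}, identify the constant limit cycles with the zeros of $f_1+f_2$ lying on $\{x\,|\,f_1(x)f_2(x)=0\}$ (with matching multiplicities, via Proposition \ref{proposition2.6}), and play the at-most-three zeros of the cubic $f_1+f_2$ against the sign-change counts $s_{I_i}$ on the components of $\mathbb R\setminus\{x\,|\,f_1(x)f_2(x)=0\}$. Your case (a) and your reduction of case (b) to ``at most two limit cycles, with multiplicity, in $[0,1]\times I_j$ when $s_{I_j}=2$'' are exactly Propositions \ref{prop1} and \ref{prop2} of the paper. But that last reduction is where the proof actually lives, and you have not closed it. Your fallback --- ``the constant sign of $P'''$ on $I_j$'' --- is false in general: by \eqref{han3} the sign of $P'''-\tfrac32(P'')^2/P'$ is governed by $6a_1\int_0^{1/2}(\cdot)^2\,dt+6a_2\int_{1/2}^{1}(\cdot)^2\,dt$, which has a definite sign only when $a_1a_2\ge 0$, i.e.\ precisely the case already covered by \cite{HX}; when $a_1a_2<0$ it gives nothing. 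And you candidly concede that when your Wronskian $R=Q_1'Q_2-Q_1Q_2'$ changes sign twice on $I_j$ you do not see how to exclude a third limit cycle. That is exactly the hard configuration; identifying it is not the same as resolving it.

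The paper's resolution is not a refinement of the $P'''$ route. From Theorem \ref{coro1} one finds that $P'(x_0)-1$ and (at $P'=1$) $P''(x_0)$ are controlled by $G(x_0)=Ax_0x_1-B(x_0+x_1)-C$ and by $Ax_0-B$, $Ax_1-B$, where $A,B,C$ are the $2\times2$ minors of the coefficient vectors (your $R(x)=Ax^2-2Bx-C$ is the diagonal restriction of $G$). It is first shown that every non-constant limit cycle has multiplicity at most two (Lemma \ref{prop5}); then, for $A\neq0$, the factorization $AG(x_0)=(Ax_0-B)(Ax_1-B)-(B^2+AC)$ together with the monotonicity of $x_1$ in $x_0$ localizes any double limit cycle on one branch of the hyperbola $G=0$, yielding at most one double limit cycle and showing it cannot coexist with a hyperbolic one in $[0,1]\times E$ (Claims 1--3); finally the configuration of three hyperbolic limit cycles in $[0,1]\times E$ is excluded by a rotated-equations argument in the parameter $c_1$ (Claim 4), which uses the constant limit cycle $x=0$ as a fourth anchor to force a forbidden non-hyperbolic collision. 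None of this is recoverable from parity and boundary information alone, so the gap in your proposal is genuine. (Sharpness, by contrast, is far easier than your construction suggests: take $f_1=f_2$ with three simple zeros, giving three hyperbolic constant limit cycles.)
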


\subsection*{Application 3}
The third application is on the mosquito population suppression model \eqref{mosquito}. As introduced above, the realistic release strategies of the sterile mosquitoes, are designed according to the periodic waiting time $T$ between the releases, the sexual lifespan $\overline T$ of the sterile mosquitoes, and the release amount $c$. We adopt the classification from the articles \cite{YULI,5,BOYU}, considering the model under the following two strategies:

\subsubsection*{Strategy $T>\overline T$}
In this case the function $h(t)$ in model \eqref{mosquito} satisfies \eqref{eq3}. Yu and Li \cite{YULI,5} successively investigated the existence and stability of limit cycles of such model.
They introduced two release amount thresholds $g^{*}$ and $c^{*}$ with $g^{*}<c^{*}$, and a waiting period threshold $T^{*}>\overline T$, see \cite{YULI} or \eqref{thresholds1} in Section 4 for the explicit expressions. By means of the integrability techniques, they obtained the following result.

\begin{proposition}[\cite{YULI, 5}]\label{subapplication2.1}
    Under the assumption that $T>\overline T$ (i.e., \eqref{eq3} is satisfied),
    equation \eqref{mosquito} has at most three limit cycles. Furthermore, the global dynamics of the equation with respect to the parameters $T$ and $c$ is characterized in Table \ref{table1}, where $E_{0}$ represents the trivial constant solution $\omega=0$, and the notations ``\textnormal{NC}'', ``\textnormal{LAS}'', ``\textnormal{GAS}'' and ``\textnormal{US}'' represent ``non-constant'', ``locally asymptotically stable'', ``globally asymptotically stable'' and ``unstable'', respectively.
    \vspace{-0.2cm}
    \begin{table}[!ht]
        \center
        \begin{threeparttable}
        \begin{tabular}{|c|c|c|c|}
            \hline
             \diagbox[innerwidth=2cm]{$T$}{$c$} &$0<c\leq g^{*}$&$g^{*}<c<c^{*}$&$c\geq c^{*}$\\
            \hline
           $\overline{T}<T<T^{*}$&\makecell[c]{\rm{Exact two NC limit }\\\rm{cycles +$E_{0}$ LAS}}&\makecell[c]{\rm{At most two NC limit }\\ \rm{cycles + $E_{0}$ LAS}}&\multicolumn{1}{c|}{\multirow{2}{*}{\rm{$E_{0}$ GAS}}} \\
           \cline{1-3}
           $T=T^{*}$& \multicolumn{2}{c|}{\multirow{2}{*}{\makecell[c]{\rm{A unique GAS NC limit cycle}\\ $\iff$ \rm{$E_{0}$ US}}}}&\\
           \cline{1-1} \cline{4-4}
           $T>T^{*}$& \multicolumn{3}{c|}{} \\
           \hline
        \end{tabular}
        \end{threeparttable}
        \caption{Main conclusions in \cite{YULI,5}.}\label{table1}
        \vspace{-0.8cm}
    \end{table}
\end{proposition}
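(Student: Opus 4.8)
The plan is to reduce the mosquito model to an Abel equation and then feed it through the machinery of this paper. Under the strategy \eqref{eq3} the function $h(t)$ is the $T$-periodic step function equal to $c$ on $[kT,kT+\overline T)$ and to $0$ on $[kT+\overline T,(k+1)T)$, so \eqref{mosquito} is of the form \eqref{equation0} with $n=2$. Applying Cherkas' transformation $x=\frac{\omega}{\omega+c}$ (Remark \ref{rem}), which is a $C^1$-diffeomorphism of $\mathbb R_0^+$ onto $[0,1)$ carrying solutions to solutions and isolated periodic solutions to isolated periodic solutions, a direct computation turns the equation into the form \eqref{main equation} on $I=[0,1)$, with $f_1(x)=-ax^3+(a+\mu)x^2-(\mu+\xi c)x$ on the release sub-interval and $f_2(x)=-(a-\mu+\xi c)x^2+(a-\mu)x$ on the no-release sub-interval (a degenerate cubic with $a_2=0$). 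Since limit cycles are preserved under the transformation, Theorem \ref{main theorem} immediately yields the bound of at most three limit cycles with multiplicity. Moreover $x=0$ (i.e.\ $\omega=0$) is the unique common zero of $f_1,f_2$, so $E_0$ is the only constant solution, and $f_1(1)=f_2(1)=-\xi c<0$ shows the line $x=1$ ($\omega=+\infty$) is repelling, confining the dynamics to a compact attracting region of $[0,1)$.

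For the refined picture of Table \ref{table1}, I would run the two-step procedure built on Theorems \ref{coro1}--\ref{th2}, after normalizing via Lemma \ref{lemma1}; this replaces $(f_1,f_2)$ by $(\overline T f_1,(T-\overline T)f_2)$ up to a common positive factor, so the relevant auxiliary function is $F_T:=\overline T f_1+(T-\overline T)f_2=x\,Q_T(x)$ with $Q_T$ a downward parabola. In Step 1 the zeros of $f_1f_2$ in $[0,1)$ are $0$, the root $s=\frac{a-\mu}{a-\mu+\xi c}\in(0,1)$ of $f_2(x)/x$, and the two roots of $f_1(x)/x$ (real and in $(0,1)$ exactly when $c<(a-\mu)^2/(4a\xi)$), which partitions $[0,1)$ into the subintervals $I_i$. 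In Step 2 one counts the sign changes of $F_T$ on each $I_i$. The key computation is that $\operatorname{sign} Q_T(0)=\operatorname{sign}\!\big((T-\overline T)(a-\mu)-\overline T(\mu+\xi c)\big)=\operatorname{sign}\!\big(P'(0)-1\big)$, the linearization of the Poincar\'e map at $E_0$, and it vanishes exactly on the curve $T=T^{*}:=\overline T\,\frac{a+\xi c}{a-\mu}>\overline T$; similarly the thresholds $g^*<c^*$ on $c$ are the values at which the configuration of the inner roots of $Q_T$ relative to $s$ and to the roots of $f_1/x$ changes, coming from discriminant and resultant conditions. Feeding these sign counts into Theorem \ref{th2} gives, region by region, non-existence, at most one hyperbolic limit cycle, or at most two, matching the entries of Table \ref{table1}, while the stability type of $E_0$ is read from $\operatorname{sign}Q_T(0)$, i.e.\ from the position of $T$ relative to $T^{*}$.

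It remains to upgrade the ``at most'' counts to the exact ones and to promote local stability to global attractivity. For $\overline T<T<T^{*}$ with $c\le g^*$ I would prove the existence of exactly two non-constant limit cycles by a continuity/degree argument: $P(0)=0$ with $P'(0)<1$ forces $P(x)-x<0$ for small $x>0$, the field points inward near $x=1$, and $F_T$ changes sign twice on the intermediate subinterval, producing at least two fixed points of $P$ there, while the sign pattern of $F_T$ on the other subintervals excludes the third (Theorem \ref{main theorem}) cycle from that subinterval; their alternating stability follows from $P'$ via Theorem \ref{coro1}. When $T\ge T^{*}$ the equilibrium $E_0$ becomes repelling, the two cycles have collided in a saddle--node at $T=T^{*}$ (nondegeneracy checked through $P''$ from Theorem \ref{coro1}) or merged into one, and a monotonicity argument on $P(x)-x$ over $(0,1)$ upgrades the surviving limit cycle to a global attractor of $\mathbb R_0^+\setminus\{0\}$, giving the equivalence ``unique GAS limit cycle $\iff$ $E_0$ unstable''; for $g^*<c<c^*$ the only change is that an extra saddle--node can occur as $T$ varies inside $(\overline T,T^{*})$, whence ``at most two''.

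The main obstacle is precisely this last step. Theorem \ref{th2} is one-sided: with two sign changes of $F_T$ on a subinterval it gives no lower bound and no localization, so one must both manufacture the two cycles and, more delicately, exclude a \emph{third} one inside the \emph{same} subinterval --- the global count of three from Theorem \ref{main theorem} does not do this on its own. I expect this to require a genuine monotonicity analysis of the Poincar\'e map (or of an associated displacement or ratio function) that exploits the explicit cubic--quadratic structure of $f_1,f_2$ together with the alternation of $\operatorname{sign}(P'-1)$ at consecutive fixed points, followed by the bookkeeping needed to glue the resulting local pictures consistently over the whole $(T,c)$-plane, including the degenerate boundary cases $c\in\{g^*,c^*,(a-\mu)^2/(4a\xi)\}$ and $T=T^{*}$ at which zeros of $Q_T$ collide; this gluing is the most laborious part of the proof.
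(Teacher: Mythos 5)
Your overall strategy --- normalize, run the two-step sign-change procedure on the sum of the two vector fields, read off the stability of $E_0$ from the derivative formulas at the constant solution, and use trapping arguments at the zeros of the pieces for existence --- is essentially the route the paper takes, except that the paper works directly in the $w$ variable with equation \eqref{YU11} rather than passing to Cherkas coordinates (the Abel form is only recorded in Remark \ref{rem}); either coordinate system works, and invoking Theorem \ref{main theorem} for the global bound of three is a legitimate shortcut. Two points, however, need attention. First, a geometric slip: for $0<c\le g^{*}$ and $\overline T<T<T^{*}$ the two sign changes of $h_1+h_2$ do \emph{not} occur on a single ``intermediate'' component of $\mathcal U$; they occur on the two \emph{distinct} components $(0,\lambda_1)$ and $(\lambda_2,A)$, one sign change each, which is precisely why Theorem \ref{th2}(ii) gives at most one cycle in each and why the boundary signs $h_2(\lambda_1)>0$, $h_2(\lambda_2)>0$, $h_1(A)<0$, together with the stability of $E_0$, force exactly one in each. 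Your claim that two sign changes on one subinterval ``produce at least two fixed points of $P$'' is false, and it contradicts your own later (correct) remark that Theorem \ref{th2} is one-sided and yields no lower bound.

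Second, and more seriously, the step you yourself flag as ``the main obstacle'' --- excluding a third non-constant cycle when $g^{*}<c<c^{*}$ and the auxiliary function has two sign changes on the single component $(0,A)$ --- is left unresolved: the monotonicity analysis you sketch is never carried out. The paper closes this gap with Proposition \ref{propostion2.5}: one checks $h_1'''(w)=\frac{12ac\overline T}{(w+c)^{4}}>0$ and $h_2'''\equiv 0$, so the equation has at most three periodic solutions in total, one of which is the constant $w=0$, leaving at most two non-constant limit cycles. (The same tool is available in your Cherkas coordinates, since there $f_1'''\equiv-6a<0$ and $f_2'''\equiv0$, so you had the needed lemma at hand without any bespoke monotonicity argument.) Without this, or an equivalent convexity-type criterion, the entry ``at most two NC limit cycles'' for $g^{*}<c<c^{*}$ is not established, and the proof is incomplete at exactly the point you identified as laborious.
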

Here we reobtain this result by using our approach. The proof will be more short and simple compared to the ones in \cite{YULI,5}.
See Section 4 for details.

\subsubsection*{Strategy $T<\overline T$}
In this case the function $h(t)$ in model \eqref{mosquito} satisfies \eqref{eq4}. Similar to the studies in \cite{YULI,5}, Zheng et al. \cite{BOYU} introduced four thresholds $g_{1}^{*}$, $g_{2}^{*}$, $T^{**}$ and $c^{**}$, where the explicit expressions of the first two are also shown in \eqref{thresholds2} in Section 4. The authors got the following global dynamics for the model.
\begin{proposition}[\cite{BOYU}]\label{BOYU}
     Under the assumption that $T<\overline T$ (i.e., \eqref{eq4} is satisfied), the global dynamics of equation \eqref{mosquito} with respect to the parameters $T$ and $c$ is characterized in Table \ref{table2}, where the notations ``$E_0$'', ``\textnormal{NC}'', ``\textnormal{LAS}'', ``\textnormal{GAS}'' and ``\textnormal{US}'' follow the same as in Proposition \ref{subapplication2.1}, and ``?'' represents the unknown result.
     \vspace{-0.2cm}
    \begin{table}[!ht]
        \center
        \label{Table2}
        \begin{threeparttable}
        \begin{tabular}{|c|c|c|c|c|}
            \hline
             \diagbox[innerwidth=2cm]{$T$}{$c$} &$0<c\leq g_{1}^{*}$&$g_{1}^{*}<c\leq c^{**}$&$c^{**}<c<g_{2}^{*}$&$c\geq g_{2}^{*}$\\
            \hline
           $0<T<T^{**}$& \multicolumn{1}{c|}{\multirow{3}{*}{\makecell[c]{\rm{Exact two NC}\\ \rm{limit cycles}\\ \rm{+$E_{0}$ LAS}}}}  &\multicolumn{2}{c|}{\rm{$E_{0}$ GAS}}&\multicolumn{1}{c|}{\multirow{3}{*}{\rm{$E_{0}$ GAS}}} \\
           \cline{1-1}\cline{3-4}
           $T=T^{**}$& &\textcolor{red}{\rm{?}}&\textcolor{red}{\rm{?}}& \\
           \cline{1-1}\cline{3-4}
           $T^{**}<T<\overline{T}$& &\textcolor{red}{\rm{?}}+ \rm{$E_{0}$ LAS}&\makecell[c]{\rm{At most two NC limit}\\ \rm{cycles+ $E_{0}$ LAS}}& \\
           \hline
        \end{tabular}
        \end{threeparttable}
        \caption{Main conclusions in \cite{BOYU}.}\label{table2}
        \vspace{-0.8cm}
    \end{table}
\end{proposition}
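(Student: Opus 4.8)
The plan is to recover Proposition~\ref{BOYU} by running the two-step procedure distilled from Theorems~\ref{coro1} and~\ref{th2} on model~\eqref{mosquito}, and then to add the quantitative estimates that pin down the global picture of Table~\ref{table2}. Under the release rule~\eqref{eq4} the model is equation~\eqref{equation0} with $n=2$, $T_1=q$, $p=[\overline{T}/T]\ge1$, $q=\overline{T}-pT\in(0,T)$; applying Lemma~\ref{lemma1} to normalize the period and the switching instant turns it into an equation of the form~\eqref{equation2} on $I=[0,+\infty)$ whose right-hand sides on the two pieces are positive multiples of $\phi_{h_1}$ and $\phi_{h_2}$, where $\phi_h(w):=w\bigl(\tfrac{aw}{w+h}-\mu-\xi(w+h)\bigr)$ and $h_1:=(p+1)c>h_2:=pc>0$. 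Since only the signs and zeros of $\phi_{h_1},\phi_{h_2}$ and $\phi_{h_1}+\phi_{h_2}$ enter the procedure, this rescaling is harmless. Note also that $w\equiv0$ is always a solution (the state $E_0$), and $\phi_{h_i}'(0)=-(\mu+\xi h_i)<0$, so $P'(0)=\exp\bigl(q\,\phi_{h_1}'(0)+(T-q)\,\phi_{h_2}'(0)\bigr)<1$; hence $E_0$ is always locally asymptotically stable and, since $P$ is an increasing map of $[0,+\infty)$ with $P(0)=0$, it is globally asymptotically stable exactly when there is no non-constant periodic solution.

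For Step~1 I would determine the common zeros of the two pieces. For $w>0$ one has $\phi_h(w)=0$ iff $\psi_h(w):=-\xi w^2+(a-\mu-2\xi h)w-h(\mu+\xi h)=0$, a concave quadratic with $\psi_h(0)<0$; a common positive root of $\psi_{h_1}$ and $\psi_{h_2}$ would satisfy $\psi_{h_1}-\psi_{h_2}=(h_1-h_2)\bigl(-2\xi w-\mu-\xi(h_1+h_2)\bigr)=0$, forcing $w<0$. Hence $w=0$ is the only common zero, so $E_0$ is the unique constant limit cycle. Moreover $\psi_h$ has two, one (double), or no positive zeros according to the sign of its discriminant $\Delta(h)=(a-\mu)^2-4a\xi h$, i.e. according to whether $h<h_c$, $h=h_c$, or $h>h_c$ with $h_c:=\tfrac{(a-\mu)^2}{4a\xi}$; together with $h_1>h_2$ this is the source of the $c$-thresholds ($g_2^{*}$, for instance, is the value $h_c/p$ above which both $\phi_{h_i}<0$ on $(0,+\infty)$, so that every solution decreases to $E_0$ and $E_0$ is globally asymptotically stable). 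The positive zeros of $\phi_{h_1}\phi_{h_2}$ (at most four of them) split $(0,+\infty)$ into the open components $I_1,\dots,I_m$ of Theorem~\ref{th2}.

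For Step~2 I would count the sign changes of $\phi_{h_1}+\phi_{h_2}$. One has $\phi_{h_1}(w)+\phi_{h_2}(w)=w\bigl(a(\tfrac{w}{w+h_1}+\tfrac{w}{w+h_2})-2\mu-2\xi w-\xi(h_1+h_2)\bigr)$, and the bracket is concave in $w$ (each $\tfrac{w}{w+h_i}=1-\tfrac{h_i}{w+h_i}$ is concave and $-2\xi w$ is linear), negative at $w=0^{+}$ and tending to $-\infty$; hence it has at most two zeros, so $\phi_{h_1}+\phi_{h_2}$ changes sign at most twice on $(0,+\infty)$. By Theorem~\ref{th2}(i.2) a non-constant periodic solution lives only where $\phi_{h_1}+\phi_{h_2}$ changes sign, and applying Theorem~\ref{th2}(ii) on suitable open subintervals of $\bigcup_iI_i$ each carrying exactly one sign change gives at most two non-constant periodic solutions, hyperbolic when the sign changes are simple; the sign of the determinant in the $P'$-formula of Theorem~\ref{coro1} then shows that of the two the lower one is unstable and the upper one stable. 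With Step~1 this already yields that equation~\eqref{mosquito} under~\eqref{eq4} has at most two non-constant limit cycles besides $E_0$ (the bound being $2$, not $3$, because the bracket is concave rather than cubic, so that Theorem~\ref{main theorem} plays no role here).

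It remains to upgrade this to the full statement of Table~\ref{table2}. This requires: (a)~handling the case in which both sign changes of $\phi_{h_1}+\phi_{h_2}$ fall inside a single component $I_i$, which Theorems~\ref{coro1}/\ref{th2} do not settle directly — here I would use that $\{\phi_{h_1}+\phi_{h_2}>0\}$ is contained in the union of the two ``humps'' $\{\phi_{h_1}>0\}$ and $\{\phi_{h_2}>0\}$, together with the ordering of the zeros of $\phi_{h_1},\phi_{h_2}$ coming from $\Delta(h)$, to show the two sign changes are separated by a zero of $\phi_{h_1}\phi_{h_2}$ (so that the previous argument still applies), falling back on the $P'$, $P''$ and, for degeneracies, the $P'''$ formula of Section~3 to exclude a third limit cycle if needed; (b)~establishing existence of the limit cycles when $\phi_{h_1}+\phi_{h_2}$ genuinely changes sign twice, via a sign change of $P-\mathrm{id}$ and a fixed-point argument, with a semistable cycle where the two coalesce; (c)~translating the zero/sign-change data into explicit inequalities in $(T,c)$, thereby identifying $g_1^{*},c^{**},T^{**},g_2^{*}$ and following the saddle-node bifurcation of the limit cycles as $(T,c)$ varies, which reconstructs Table~\ref{table2} (and, where the analysis is conclusive, sharpens the ``?''-entries). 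I expect (a) and (c) — the $s_{I_i}=2$ sub-case and the global bookkeeping in the $(T,c)$-plane — to be the main obstacle; the qualitative counting via Theorems~\ref{coro1} and~\ref{th2} is essentially immediate once Steps~1–2 are carried out.
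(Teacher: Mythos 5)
First, a point of orientation: the paper does not prove Proposition \ref{BOYU} at all --- it is quoted verbatim from \cite{BOYU}, whose authors obtained it by integrability techniques, and the ``?'' entries of Table \ref{table2} mark precisely what that reference left open. What the paper itself proves in this direction is Proposition \ref{application3.2} (hence Theorem \ref{subapplication2.2}), which runs the two-step procedure on the normalized equation \eqref{25} and produces the \emph{completed} Table \ref{table3}. Your outline is, in substance, a sketch of that proof rather than of the cited statement: your Step 1 (the quadratics $\psi_{h}$, the discriminant $(a-\mu)^2-4a\xi h$, the thresholds $g_1^*=h_c/(p+1)$, $g_2^*=h_c/p$, uniqueness of the constant cycle $w=0$) and your Step 2 (at most two sign changes of $\phi_{h_1}+\phi_{h_2}$) match the paper's Steps 1--2 for \eqref{25}, and your stability/existence conclusions match Proposition \ref{application3.2}(i)--(ii). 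But the thresholds $T^{**}$ and $c^{**}$ appearing in Table \ref{table2} are defined in \cite{BOYU} by other means and are not recoverable from the sign-change data; that data naturally yields the different threshold $T^{***}$ of \eqref{threshold3}. So item (c) of your plan cannot ``identify $c^{**},T^{**}$'' and reconstruct Table \ref{table2} as stated; a correct execution proves Table \ref{table3} instead.

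The one genuine mathematical gap is in your item (a). Your primary strategy there --- that the two sign changes of $\phi_{h_1}+\phi_{h_2}$ are separated by a zero of $\phi_{h_1}\phi_{h_2}$, so that Theorem \ref{th2}(ii) applies on each side --- fails exactly in the regime where the difficulty occurs. When $g_1^*<c<g_2^*$ one has $h_1=(p+1)c>h_c$, so $\phi_{h_1}<0$ on $(0,+\infty)$ and the set $\{\phi_{h_1}+\phi_{h_2}>0\}\subseteq\{\phi_{h_2}>0\}=(\lambda_{2,1},\lambda_{2,2})$ is contained in a \emph{single} component of $\mathcal U$; both sign changes then lie in $(\lambda_{2,1},\lambda_{2,2})$ with no zero of $\phi_{h_1}\phi_{h_2}$ between them (this is the entry $s_{(\lambda_{2,1},\lambda_{2,2})}\le 2$ of Table \ref{Table of application3.2}). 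The paper closes this case not by Theorem \ref{th2} but by the integral-form criterion: since $h_1'''<0$ and $h_2'''<0$ (see \eqref{26}), Proposition \ref{propostion2.5} bounds the total number of limit cycles by three, and subtracting the ever-present constant cycle $w=0$ leaves at most two non-constant ones. You mention the $P'''$ formula only as a fallback ``for degeneracies''; it must in fact carry the whole of the $s_{E}=2$ sub-case, so you should promote it to the main argument there and drop the separation claim.
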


As presented in Table \ref{table2}, the characterization of the dynamics of equation \eqref{mosquito}, remains incomplete when $c\in(g_{1}^{*},g_{2}^{*})$.
Here, we address this and improve the result of Proposition \ref{BOYU} by introducing a new threshold $T^{***}$, defined in \eqref{threshold3}. Our result is stated as below:

\begin{theorem}\label{subapplication2.2}
    Under the assumption that $T>\overline T$ (i.e., \eqref{eq4} is satisfied), the global dynamics of equation \eqref{mosquito} with respect to the parameters $T$ and $c$ is characterized in Table \ref{table3}, where the notations ``$E_0$'', ``\textnormal{NC}'', ``\textnormal{LAS}'', ``\textnormal{GAS}'' and ``\textnormal{US}'' follow the same as in Proposition \ref{subapplication2.1}.
    \vspace{-0.2cm}
    \begin{table}[!ht]
        \center
        \label{Table2}
        \begin{threeparttable}
        \begin{tabular}{|c|c|c|c|}
            \hline
             \diagbox[innerwidth=2cm]{$T$}{$c$} &$0<c\leq g_{1}^{*}$& $g_{1}^{*}<c<g_{2}^{*}$&$c\geq g_{2}^{*}$\\
            \hline
           $0<T<T^{***}$& \multicolumn{1}{c|}{\multirow{3}{*}{\makecell[c]{\rm{Exact two NC}\\ \rm{limit cycles}\\ \rm{+$E_{0}$ LAS}}}}  &\multicolumn{1}{c|}{\multirow{2}{*}{\textcolor{blue}{\rm{$E_{0}$ GAS}}}}&\multicolumn{1}{c|}{\multirow{3}{*}{\rm{$E_{0}$ GAS}}} \\
           \cline{1-1}
           $T=T^{***}$& & & \\
           \cline{1-1}\cline{3-3}
           $T^{***}<T<\overline{T}$& & \makecell[c]{\textcolor{blue}{\rm{At most two NC limit cycles}}\\\textcolor{blue}{\rm{+ $E_{0}$ LAS}}}& \\
           \hline
        \end{tabular}
        \end{threeparttable}
        \caption{Completion for the conclusions in \cite{BOYU}.}\label{table3}
        \vspace{-0.8cm}
    \end{table}
\end{theorem}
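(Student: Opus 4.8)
The plan is to put model \eqref{mosquito} under the release rule \eqref{eq4} into the form \eqref{equation2}, run the two-step procedure based on Theorems \ref{coro1} and \ref{th2}, and then pin down the new threshold $T^{***}$ by a monotonicity argument in $T$. On $[kT,kT+q)$ the equation reads $\dot w=f_{h_1}(w)$ and on $[kT+q,(k+1)T)$ it reads $\dot w=f_{h_2}(w)$, where $h_1=(p+1)c$, $h_2=pc$ and $f_h(w):=w\big(\tfrac{aw}{w+h}-\mu-\xi(w+h)\big)=\tfrac{w\,q_h(w)}{w+h}$, with $q_h$ a quadratic having negative leading coefficient and $q_h(0)=-h(\mu+\xi h)$ (negative for $h>0$). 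Each $f_h$ is $C^1$ on $I=\mathbb R_0^+$ with finitely many zeros, so by Lemma \ref{lemma1} I may rescale the two sub-intervals to equal length, which only multiplies $f_{h_1},f_{h_2}$ by positive constants $\lambda_1\propto\overline T-pT$ and $\lambda_2\propto(p+1)T-\overline T$; thus we reach \eqref{equation2} with $f_1=\lambda_1 f_{h_1}$, $f_2=\lambda_2 f_{h_2}$. Here $w=0$ is a common zero of $f_1,f_2$, so $E_0\colon w\equiv0$ is a constant solution, and the linearization at $w=0$ gives $P'(0)<1$ because $f_h'(0)=-(\mu+\xi h)<0$; hence $E_0$ is locally asymptotically stable for every admissible $(T,c)$. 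Finally, the positive zeros of $f_h$ are governed by the sign of $A^2-4ah/\xi$ (where $A:=(a-\mu)/\xi$): for $h<h^*:=\xi A^2/(4a)$ there are exactly two, while for $h\ge h^*$ there are none and $f_h\le0$ on $(0,\infty)$; this is the origin of the thresholds $g_1^*=h^*/(p+1)$ and $g_2^*=h^*/p$.

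Next I would run Steps 1--2. The only common zero of $f_1,f_2$ on $\mathbb R_0^+$ is $w=0$, so there is no non-trivial constant limit cycle; after partitioning $(0,\infty)$ by the zeros of $f_1,f_2$, it remains to count the sign changes of $f_1+f_2$. Over a common denominator $f_1+f_2=\tfrac{w\,C(w)}{(w+h_1)(w+h_2)}$ with $C(w)=\lambda_1 q_{h_1}(w)(w+h_2)+\lambda_2 q_{h_2}(w)(w+h_1)$ a cubic; since $C$ has negative leading coefficient and $C(0)=\lambda_1 q_{h_1}(0)h_2+\lambda_2 q_{h_2}(0)h_1<0$, the cubic $C$ is negative at both $0$ and $+\infty$, which forces it to have at most two positive zeros. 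Hence $f_1+f_2$ has at most two sign changes on $(0,\infty)$, so by Theorem \ref{th2} equation \eqref{equation2} has at most two limit cycles in all. If $c\ge g_2^*$, then $h_1,h_2\ge h^*$, so $f_1,f_2\le0$ and $f_1+f_2\le0$ on $(0,\infty)$; by Theorem \ref{th2}(i.2) there is no non-constant periodic orbit, and since $P(x_0)<x_0$ for all $x_0>0$ the iterates of $P$ decrease to $0$, so $E_0$ is globally asymptotically stable --- the last column of Table \ref{table3}.

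The core of the argument is the middle column $g_1^*<c<g_2^*$, where $h_1>h^*>h_2$, so $f_1<0$ on $(0,\infty)$ while $f_2$ is positive exactly between its two positive zeros; whether $f_1+f_2$ changes sign now depends on the weights, i.e. on $T$ via $\lambda_1(T)\propto\overline T-pT$ and $\lambda_2(T)\propto(p+1)T-\overline T$. I would compute $\tfrac{\partial}{\partial T}(f_1+f_2)\propto-p\,f_{h_1}+(p+1)f_{h_2}$ and observe that at any $w$ with $f_1+f_2=0$ necessarily $f_{h_2}(w)>0$ (as $f_{h_1}<0$), whence $\tfrac{\partial}{\partial T}(f_1+f_2)(w)>0$; so $\max_{w>0}(f_1+f_2)$ is strictly increasing in $T$, and I would identify $T^{***}$ (the threshold of \eqref{threshold3}) with the value of $T$ at which this maximum crosses $0$. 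For $0<T\le T^{***}$ one has $f_1+f_2\le0$ on $(0,\infty)$, so (exactly as above) no non-constant periodic orbit exists and $E_0$ is globally asymptotically stable; for $T^{***}<T<\overline T$ the function $f_1+f_2$ changes sign exactly twice, so Theorem \ref{th2} gives at most two limit cycles while $E_0$ stays only locally asymptotically stable --- the middle column. For $c\le g_1^*$ instead $h_1,h_2<h^*$, both $f_1,f_2$ have positive parts and $f_1+f_2$ changes sign exactly twice for every $T\in(0,\overline T)$; Theorem \ref{th2} again bounds the number of limit cycles by two, and combining this with the behaviour of $P$ near $w=0^+$ (where $P(x_0)<x_0$ since $P'(0)<1$), near $+\infty$ (where $f_i<0$, so $P(x_0)<x_0$), and the existence of an invariant compact sub-interval on which $P(x_0)>x_0$ somewhere, an index count yields exactly two limit cycles --- the inner one unstable ($P'>1$) and the outer one stable ($P'<1$), these signs being read off from the derivative formula of Theorem \ref{coro1} --- the first column.

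The steps I expect to be the main obstacle are the two quantitative claims just used. The first is that $\max_{w>0}(f_1+f_2)$ is genuinely monotone in $T$ over the whole admissible range --- the monotonicity must survive the jumps of $p=[\overline T/T]$ --- so that $T^{***}$ is a single, well-defined threshold and the dichotomy ``$E_0$ GAS below / at most two limit cycles above'' is clean. The second is the \emph{existence} of two (not merely ``at most two'') limit cycles when $c\le g_1^*$: this requires controlling the sign of the displacement $P(x_0)-x_0$ on the persistence interval and, via the second-derivative formula of Theorem \ref{coro1}, excluding a degenerate multiplicity-two coalescence of the two. The remaining work --- the linearization at $E_0$, the global attraction of $E_0$ whenever no limit cycle exists, and matching $g_1^*,g_2^*,T^{***}$ with the constants of \cite{BOYU} --- is routine bookkeeping.
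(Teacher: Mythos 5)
Your overall strategy (normalize via Lemma \ref{lemma1}, locate the zeros of the two pieces, count the sign changes of their sum, and read off the thresholds $g_1^*=h^*/(p+1)$, $g_2^*=h^*/p$) is the same as the paper's, and your treatment of the columns $0<c\leq g_{1}^{*}$ and $c\geq g_{2}^{*}$, as well as the local stability of $E_0$, matches the paper's Proposition \ref{application3.2}. But there is a genuine gap in precisely the entry that is new in Table \ref{table3}: the bound ``at most two NC limit cycles'' for $g_{1}^{*}<c<g_{2}^{*}$ and $T^{***}<T<\overline{T}$. In that regime $h_1<0$ on all of $(0,+\infty)$ while $h_2$ has two positive zeros $\lambda_{2,1}<\lambda_{2,2}$, so both sign changes of $h_1+h_2$ occur inside the \emph{single} connected component $(\lambda_{2,1},\lambda_{2,2})$ of $\mathcal U$. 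Theorem \ref{th2}(ii) only bounds the number of periodic solutions over an interval on which $f_1+f_2$ changes sign exactly once; when $s_E=2$ on one component it gives no bound at all --- this is exactly the ``$s_{I_i}\geq2$'' branch of the procedure where the paper warns that extra analysis is required. Splitting $E$ into two one-sign-change subintervals does not help, because the range of a periodic orbit could straddle the splitting point. The paper closes this case by a different tool: it observes $h_1'''<0$ and $h_2'''<0$ and invokes Proposition \ref{propostion2.5} (the integral-form derivative criterion), which caps the total number of periodic solutions at three, one of which is $w=0$. The same objection applies to your blanket inference ``at most two sign changes on $(0,\infty)$, so by Theorem \ref{th2} at most two limit cycles in all'': that step is valid only when the sign changes fall in distinct components, as happens for $c\leq g_{1}^{*}$ but not for $g_{1}^{*}<c<g_{2}^{*}$.

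A second, smaller issue is your definition of $T^{***}$ as the value of $T$ at which $\max_{w>0}(f_1+f_2)$ crosses zero. The paper defines $T^{***}$ explicitly in \eqref{threshold3} as an infimum over $w\in(\lambda_{2,1},\lambda_{2,2})$, obtained by simply factoring the positive function $h_2$ out of $h_1+h_2$; the dichotomy ``$T\leq T^{***}$ implies $h_1+h_2\leq0$'' is then immediate, with no monotonicity-in-$T$ argument and no need to control the jumps of $p=[\overline{T}/T]$ (which you correctly flag as an obstacle to your version). Since the theorem is stated for the specific quantity \eqref{threshold3}, you would in any case have to show your implicitly defined threshold coincides with it. Your existence argument for the two limit cycles when $c\leq g_{1}^{*}$ is essentially the paper's (displacement signs at $\lambda_{2,1},\lambda_{1,1},\lambda_{1,2},\lambda_{2,2}$ combined with the at-most-one-per-interval bound), and the degeneracy you worry about there is already excluded by the hyperbolicity clause of Theorem \ref{th2}(ii), since each of the two relevant components carries exactly one zero of $h_1+h_2$.
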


The organization of this paper is as follows.
Section 2 provides some auxiliary results, adapted from the previously known ones, which will be applied in Section 4 together with our results. Section 3 is devoted to proving Theorems \ref{th1'}, \ref{coro1} and \ref{th2}.
The proofs of the results (Theorems \ref{application1}, \ref{main theorem}, \ref{subapplication2.2} and Proposition \ref{subapplication2.1}) in the three applications using our new approach are presented in Section 4.
The last part is appendix.

\section{Preliminaries}
There are three parts of the auxiliary results in this section. They are presented one by one in the following.

\subsection{``Normal'' form of equation \eqref{equation0}}
In the first subsection, we follow the idea of \cite[Lemma 3.9] {HX} and give a simple normalization of equation \eqref{equation0} to the one with $T_i=\frac{i}{n}$, $i=0,\ldots,n$, that is, the one of the form \eqref{equation1}. More concretely, consider the equation
\begin{equation}\label{00}
    \frac{dx}{dt}=
\left\{
    \begin{aligned}
    &n(T_{1}-T_{0})f_{1}(x),&  t\in \Big[k,k+\frac{1}{n}\Big),\ x\in I,\\
    &n(T_{2}-T_{1})f_{2}(x),&  t\in \Big[k+\frac{1}{n},k+\frac{2}{n}\Big),\ x\in I,\\
    &\quad\cdots  \quad& \quad\cdots \quad\\
    &n(T_{n}-T_{n-1})f_{n}(x),&  t\in \Big[k+\frac{n-1}{n},k+1\Big),\ x\in I,
\end{aligned}
\right.
\end{equation}
where $T_{0},\ldots,T_n$, $f_{1},\ldots,f_n$ and $k$ are defined as in \eqref{equation0}.
Let $x(t;t_0,x_0)$ and $\overline x(t;t_0,x_0)$ be the solutions of equation \eqref{equation0} and equation \eqref{00} respectively, with the initial conditions $x(t_0;t_0,x_0)=\overline x(t_0;t_0,x_0)=x_0$. We have the following lemma:


\begin{lemma}\label{lemma1}
The Poincar\'{e} maps $P(x_0)=x(T;0,x_0)$ of equation \eqref{equation0} and $\overline P(x_0)=\overline x(1;0,x_0)$ of equation \eqref{00}, satisfy $P=\overline P$.
\end{lemma}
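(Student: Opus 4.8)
The plan is to recognize equation \eqref{00} as the result of an affine, orientation-preserving reparametrization of time applied to equation \eqref{equation0} on each of the $n$ subintervals of one period, so that the two equations have literally the same section-to-section transition maps; composing these $n$ maps then yields $P=\overline P$ together with the coincidence of their domains. Since both Poincar\'e maps only involve $k=0$, I restrict attention to a single period.

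First I would fix $i\in\{1,\dots,n\}$ and introduce the affine bijection
\[
  t=\psi_i(\tau):=T_{i-1}+n(T_i-T_{i-1})\Big(\tau-\tfrac{i-1}{n}\Big),\qquad \tau\in\Big[\tfrac{i-1}{n},\tfrac{i}{n}\Big],
\]
which maps $[\tfrac{i-1}{n},\tfrac in]$ onto $[T_{i-1},T_i]$ with $\psi_i(\tfrac{i-1}{n})=T_{i-1}$, $\psi_i(\tfrac in)=T_i$, and constant positive derivative $\psi_i'\equiv n(T_i-T_{i-1})$. A chain-rule computation shows that $x(\cdot)$ solves $\frac{dx}{dt}=f_i(x)$ on a subinterval of $[T_{i-1},T_i]$ if and only if $y(\tau):=x(\psi_i(\tau))$ solves $\frac{dy}{d\tau}=n(T_i-T_{i-1})f_i(y)$ on the corresponding subinterval of $[\tfrac{i-1}{n},\tfrac in]$, with matching endpoint values $y(\tfrac{i-1}{n})=x(T_{i-1})$ and $y(\tfrac in)=x(T_i)$. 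Because each $f_i$ is $C^1$, solutions of both sub-equations are unique; hence, writing $\varphi_i$ and $\overline\varphi_i$ for the respective section-to-section maps, for every $\rho$ the value $\overline\varphi_i(\tfrac in;\tfrac{i-1}{n},\rho)$ is defined exactly when $\varphi_i(T_i;T_{i-1},\rho)$ is, and then the two coincide.

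Finally, using the recursive description of the maximal solution recalled just before the statement, one has $P=\varphi_n(T_n;T_{n-1},\cdot)\circ\cdots\circ\varphi_1(T_1;T_0,\cdot)$ and $\overline P=\overline\varphi_n(\tfrac nn;\tfrac{n-1}{n},\cdot)\circ\cdots\circ\overline\varphi_1(\tfrac 1n;0,\cdot)$, each composition taken on the set where every factor is defined. By the previous step the $i$-th factors agree as maps (same rule and same domain) for all $i$, so the compositions, and hence $P$ and $\overline P$, agree on a common domain. The computations involved are entirely routine; the only point deserving attention is the bookkeeping of the domains caused by possible finite-time blow-up of some $f_i$, which is harmless here precisely because each $\psi_i$ is a bijection of the corresponding closed subintervals, so ``being defined up to the right endpoint'' is transported faithfully in both directions. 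This is the step I would present with the most care.
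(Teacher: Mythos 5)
Your proposal is correct and follows essentially the same route as the paper: the affine time change $\psi_i$ is exactly the map $\mu_i$ used in the paper's proof, the chain-rule computation and the appeal to uniqueness of solutions of each $C^1$ sub-equation are identical, and the final step composes the section-to-section maps $P_i=\overline P_i$ in the same way. Your extra attention to the coincidence of domains is a welcome refinement of a point the paper passes over quickly, but it does not change the argument.
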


\begin{proof}
    Let $P_i$ and $\overline P_i$ be the Poincar\'{e} maps of equation \eqref{equation0} from $t=T_{i-1}$ to $t=T_i$ and equation \eqref{00} from $t=\frac{i-1}{n}$ to $t=\frac{i}{n}$, respectively, i.e.,
    $P_i(x_0)=x(T_i;T_{i-1},x_0)$ and $\overline P_i(x_0)=\overline x\big(\frac{i}{n};\frac{i-1}{n},x_0\big)$, where $i=1,\ldots,n$.
    By assumption, we have the decompositions $P=P_n\circ\cdots\circ P_1$ and $\overline P=\overline P_n\circ\cdots\circ \overline P_1$. Then, the conclusion will follow once  $P_i=\overline P_i$ for each $i$.

    In fact, denote by $\mu_i(t)=n(T_{i}-T_{i-1})(t-\frac{i-1}{n})+T_{i-1}$, where $t\in\big[\frac{i-1}{n},\frac{i}{n}\big)$. One can readily see that $\mu_i(t)\in[T_{i-1},T_i)$ with $\mu_i\big(\frac{i-1}{n}\big)=T_{i-1}$ and $\mu_i\big(\frac{i}{n}\big)=T_{i}$. Thus,
     \begin{align*}
       \frac{d x(\mu_i(t);T_{i-1},x_0)}{dt}=\frac{dx}{d\mu_i}\frac{d\mu_i}{dt}=n(T_{i}-T_{i-1}) f_i\big(x(\mu_i(t);T_{i-1},x_0)\big), 
     \end{align*}
    which indicates that $x(\mu_i(t);T_{i-1},x_0)$ is a solution of the $i$-th sub-equation of \eqref{00}. Taking into account the fact that $x\big(\mu_i(\frac{i-1}{n});T_{i-1},x_0\big)=x(T_{i-1};T_{i-1},x_0)=x_0$, we obtain $x\big(\mu_i(t);T_{i-1},x_0\big)=\overline x\big(t;\frac{i-1}{n},x_0\big)$. Accordingly, $P_i=\overline P_i$ holds in its domain, $i=1,\ldots,n$. The proof of the lemma is finished.
    \end{proof}
In view of the arbitrariness of the functions $f_1,\ldots,f_n$, Lemma \ref{lemma1} implies that the dynamical problem for the general equation \eqref{equation0} can be reduced to that for the particualr case \eqref{equation1}. The latter one will therefore be our main consideration below (as stated in Section 1), unless explicitly stated otherwise.

\subsection{Rotated differential equations}
The theory of rotated differential equations (monotonic family of differential equations) is a direct adaptation of the classical theory of rotated vector fields introduced by Duff \cite{rotated1}. It provides powerful tools for understanding the evolution of limit cycles as the parameter of the differential equations varies, see e.g. \cite{rotated2,rotated3}.
Han et al. \cite{Han} in 2018 systematically generalized this theory to the one-dimensional piecewise smooth periodic differential equations.
This subsection recalls the related concept and several results of such kind of equations. For further information the readers are referred to the work \cite{Han}.

\begin{definition}[\cite{Han}]\label{definition of rotated}
    Consider a family of differential equations
    \begin{equation}\label{nah}
        \frac{dx}{dt}=f(t,x,\alpha),\indent\ t\in \mathbb{R},\ x\in I,\ \alpha\in J,
    \end{equation}
    where $I, J$ are intervals in $\mathbb R$, and the function $f$ satisfies the following conditions:
\begin{itemize}
    \item There exists a constant $T$ such that $f(t+T,x,\alpha)=f(t,x,\alpha)$ for all $(t,x,\alpha)\in \mathbb{R}\times I\times J$, i.e., $f$ is $T$-periodic in variable $t$.
    \item
    There exist $n+1$ constants $T_{0},T_{1},\cdots,T_{n}$ with $0=T_{0}<T_{1}<\cdots<T_{n-1}<T_{n}=T$, and $n$ functions $f_{1},\cdots,f_{n}\in C^{1}(\mathbb R\times I\times J)$, such that $f|_{\mathcal I_j\times I\times J}=f_j|_{\mathcal I_j\times I\times J}$, where $\mathcal I_j=[T_{j-1},T_j)$ and $j=1,\cdots,n$.
\end{itemize}
Then, we say that \eqref{nah} defines a family of rotated equations (a monotonic family of equations) on $[0,T]\times I$ with respect to $\alpha$,
if $\frac{\partial f}{\partial \alpha}\big|_{[0,T]\times I\times J}\geq 0$ (or $\leq 0$) and does not identically vanish along any solution of \eqref{nah}.
\end{definition}

\begin{lemma}[\cite{Han}]\label{property of rotated}
Suppose that equation \eqref{nah} defines a family of rotated equations with respect to $\alpha$.
\begin{itemize}
\item[(i)] If $x=\varphi(t)$ is a stable (resp. unstable) limit cycle of equation \eqref{nah}$|_{\alpha=\alpha_0}$, then there exists $\delta>0$ such that for $\alpha\in J\cap(\alpha_0-\delta,\alpha_0+\delta)$, equation \eqref{nah} has a stable (resp. unstable) limit cycle $x= x(t;\alpha)$ with $x(t;\alpha_0)=\varphi(t)$. Furthermore, the monotonicity of $x= x(t;\alpha)$ with respect to $\alpha$ is characterized in Table \ref{Table4}.
\item[(ii)] If $x=\varphi(t)$ is a semi-stable limit cycle of equation \eqref{nah}$|_{\alpha=\alpha_0}$, then there exists $\delta>0$ such that for $\alpha\in J\cap(\alpha_0-\delta,\alpha_0+\delta)\backslash\{\alpha_0\}$, equation \eqref{nah} has either two limit cycles located on distinct sides of $x=\varphi(t)$, or no limit cycles located near $x=\varphi(t)$. Furthermore, the existence and non-existence of these limit cycles as $\alpha$ varies from $\alpha_0$ are characterized in Table \ref{Table4}.
\end{itemize}\vspace{-0.1cm}
\begin{table}[!ht]
    \centering
    \newcommand{\tabincell}[2]{\begin{tabular}{@{}#1@{}}#2\end{tabular}}
    \begin{threeparttable}
    \begin{tabular}{|c|c|c|c|c|}
        \hline
        \tabincell{c}{behavior of \\ limit cycle} &stable&\tabincell{c}{unstable} &\tabincell{c}{upper-stable\\ lower-unstable}&\tabincell{c}{upper-unstable\\ lower-stable}\\
        \hline
        \tabincell{c}{$\frac{\partial f}{\partial \alpha}\geq 0$}& \tabincell{c}{increasing\\ in $\alpha$} &\tabincell{c}{decreasing\\ in $\alpha$} &\tabincell{c}{splits as $\alpha>\alpha_0$;\\ disappears as $\alpha<\alpha_0$}&\tabincell{c}{disappears as $\alpha>\alpha_0$;\\ splits as $\alpha<\alpha_0$}\\
       \hline
       $\frac{\partial f}{\partial \alpha}\leq 0$ & \tabincell{c}{decreasing\\ in $\alpha$} &\tabincell{c}{increasing\\ in $\alpha$} &\tabincell{c}{disappears as $\alpha>\alpha_0$;\\ splits as $\alpha<\alpha_0$}&\tabincell{c}{splits as $\alpha>\alpha_0$;\\ disappears as $\alpha<\alpha_0$}\\
       \hline
    \end{tabular}
    \end{threeparttable}
    \caption{Behavior of limit cycle(s) of \eqref{nah} as $\alpha$ varies. Here ``split'' means that $x=\varphi(t)$ bifurcates into two limit cycles  located on distinct sides of it, and ``disappear'' means that there is no limit cycle located near $x=\varphi(t)$.}\label{Table4}\vspace{-0.5cm}
\end{table}
\end{lemma}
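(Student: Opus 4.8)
The plan is to reduce the whole statement to the behaviour of the scalar \emph{displacement map} and to exploit that it depends strictly monotonically on the parameter $\alpha$. Working with the period map $P_\alpha(x_0)=x(T;0,x_0,\alpha)$ of \eqref{nah}, note first that since each $f_j\in C^1(\mathbb R\times I\times J)$ the flow over every strip $[T_{j-1},T_j]$ is $C^1$ in $(t,x_0,\alpha)$, so $P_\alpha$, being the finite composition $P_n^\alpha\circ\cdots\circ P_1^\alpha$ of such flow maps, is $C^1$ jointly in $(x_0,\alpha)$ on its (open) domain; hence $d(x_0,\alpha):=P_\alpha(x_0)-x_0$ is $C^1$. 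A limit cycle of \eqref{nah}$|_{\alpha=\alpha_0}$ through $(0,x_0^\ast)$ is exactly an isolated zero $x_0^\ast$ of $d(\cdot,\alpha_0)$, and the four stability types translate into sign patterns of $d(\cdot,\alpha_0)$ near $x_0^\ast$: a sign change $+\to-$ (stable), $-\to+$ (unstable), $d\le 0$ with a strict interior maximum equal to $0$ (upper-stable, lower-unstable), and $d\ge 0$ with a strict interior minimum equal to $0$ (upper-unstable, lower-stable).

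The key step is \emph{strict} monotonicity: if $\partial f/\partial\alpha\ge 0$ and does not vanish identically along any solution of \eqref{nah}, then for each fixed $x_0$ the map $\alpha\mapsto P_\alpha(x_0)$ is strictly increasing (the case $\le 0$ is symmetric). I would prove this by a comparison argument: for $\alpha_1<\alpha_2$ let $u,v$ be the solutions through $(0,x_0)$ for $\alpha_1,\alpha_2$; then $v$ is a supersolution of $\dot x=f(t,x,\alpha_1)$, so $w:=v-u\ge 0$ on $[0,T]$, and writing $\dot w-\tfrac{\partial f}{\partial x}(t,\xi(t),\alpha_1)\,w=f(t,v,\alpha_2)-f(t,v,\alpha_1)\ge 0$ with $w(0)=0$ and integrating against the positive (continuous, Lipschitz through the switching times) fundamental solution, one gets $w(t)=\Phi(t)\int_0^t\Phi(s)^{-1}\big(f(s,v(s),\alpha_2)-f(s,v(s),\alpha_1)\big)\,ds\ge 0$; if $w(T)=0$ the integrand vanishes identically, i.e.\ $\partial f/\partial\alpha\equiv 0$ along the solution $v$, contradicting the hypothesis. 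Thus $d(x_0,\cdot)$ is strictly increasing in $\alpha$ for every $x_0$.

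With $C^1$ joint regularity and strict $\alpha$-monotonicity of $d$ in hand, the conclusions are routine. For a hyperbolic cycle ($\partial d/\partial x_0(x_0^\ast,\alpha_0)\ne 0$) the implicit function theorem gives a $C^1$ branch $x_0^\ast(\alpha)$ with $x_0^\ast(\alpha_0)=x_0^\ast$, and differentiating $d(x_0^\ast(\alpha),\alpha)\equiv 0$ yields $\tfrac{d}{d\alpha}x_0^\ast(\alpha)=-\partial_\alpha d/\partial_{x_0}d$, whose sign is read off from $\partial_\alpha d>0$ together with $\partial_{x_0}d<0$ (stable) or $\partial_{x_0}d>0$ (unstable); since $\partial_{x_0}d$ keeps its sign near $(x_0^\ast,\alpha_0)$, the stability type persists, which gives the first two columns of Table~\ref{Table4} in the hyperbolic case. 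For a non-hyperbolic stable cycle, pick $x^-<x_0^\ast<x^+$ with $d(x^-,\alpha_0)>0>d(x^+,\alpha_0)$; these strict inequalities persist for $\alpha$ near $\alpha_0$, so $d(\cdot,\alpha)$ still has a $+\to-$ zero in $(x^-,x^+)$, and strict $\alpha$-monotonicity of $d$ lets one select such a zero depending monotonically (and continuously) on $\alpha$, giving the asserted persistent stable cycle with the monotonicity in Table~\ref{Table4}; the unstable case is symmetric. For a semi-stable cycle, say $d(\cdot,\alpha_0)\le 0$ near $x_0^\ast$ with a strict maximum $0$ there (upper-stable, lower-unstable): for $\alpha>\alpha_0$, $d(\cdot,\alpha)=d(\cdot,\alpha_0)+(\text{a strictly positive increment})$ has a positive maximum near $x_0^\ast$, hence exactly two nearby zeros straddling the maximiser — two limit cycles, i.e.\ ``splits''; for $\alpha<\alpha_0$, $d(\cdot,\alpha)<0$ strictly near $x_0^\ast$ — ``disappears''. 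The other three sign patterns are handled in the same way, producing the remaining entries of Table~\ref{Table4}.

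The part I expect to be the main obstacle is precisely the strict-monotonicity step combined with the non-hyperbolic bookkeeping: one must upgrade the mere inequality $\partial f/\partial\alpha\ge 0$ to a strict gain $d(x_0,\alpha_2)>d(x_0,\alpha_1)$ using only that $\partial f/\partial\alpha$ is not identically zero along solutions, and the positivity argument above must be carried through carefully across the switching instants $T_1,\dots,T_{n-1}$ (where $f$ is only piecewise-$C^1$ in $t$, though still $C^1$ in $(x,\alpha)$ on each strip, so the fundamental solution $\Phi$ remains positive and continuous). In the non-hyperbolic cases one must also verify that the tracked zero of $d(\cdot,\alpha)$ genuinely inherits the claimed (one- or two-sided) stability and varies monotonically, which is exactly where strict $\alpha$-monotonicity of $d$ — not just continuity — is indispensable. (These facts are established in \cite{Han}; the above is the line of argument I would follow.)
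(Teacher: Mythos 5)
The paper does not prove this lemma at all: it is quoted verbatim (with Table \ref{Table4}) from the reference \cite{Han}, so there is no in-paper argument to compare against. Your proposal follows what is in fact the standard route taken in \cite{Han} — reduce everything to the displacement map $d(x_0,\alpha)=P_\alpha(x_0)-x_0$, prove strict monotonicity of $d$ in $\alpha$ via the variation-of-constants representation $w(T)=\Phi(T)\int_0^T\Phi(s)^{-1}\big(f(s,v,\alpha_2)-f(s,v,\alpha_1)\big)\,ds$ together with the hypothesis that $\partial f/\partial\alpha$ does not vanish identically along solutions, and then read off the table from the sign patterns of $d(\cdot,\alpha_0)$ — so the approach is essentially the intended one and the key step is handled correctly (note only that passing from $f(s,v(s),\alpha_2)=f(s,v(s),\alpha_1)$ to $\partial f/\partial\alpha\equiv 0$ along $v$ uses the one-signedness of $\partial f/\partial\alpha$ on all of $[\alpha_1,\alpha_2]$, which you have but should say). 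Two small points of looseness, both at the level you yourself flag: in the semi-stable case you get \emph{at least} one zero on each side of the maximiser rather than ``exactly two'' without an extra genericity argument, and in the non-hyperbolic stable/unstable case the selection of a persisting zero of $d(\cdot,\alpha)$ that genuinely inherits two-sided stability (rather than merely an attracting interval of zeros) needs the explicit extremal-zero argument combined with the strict $\alpha$-monotonicity; neither affects the validity of the statement as cited.
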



\subsection{Derivative formulas in integral form of Poincar\'{e} map}
We begin this subsection by presenting the first three derivative formulas of the Poincar\'{e} map of equation \eqref{equation1}, which are generalized from the classical results for smooth equations in \cite{Lloyd}. The first two formulas are already established in \cite{Han,XIAO} and the third one is supplemented here. These formulas then yield a criterion, similar to that in \cite{Lloyd}, for controlling the number of periodic solutions of equation \eqref{equation1}. A result applying them to the constant solutions of equation \eqref{equation2} is also given. We remark that although not the main tools used in this work, the results in this subsection will provide valuable assistance later in the argument of Applications 2 and 3.


\begin{lemma}\label{han}
    The Poincar\'{e} map $P(x_0)=x(1;0,x_{0})$ of equation \eqref{equation1}, satisfies the following statements on its domain $\mathcal D\subseteq I$.
\begin{itemize}
  \item[(i)]{\upshape(\cite{Han})} Under assumption of $f_i$ (i.e., $f_{i}\in C^{1}(I)$ for $i=1,2,\ldots,n$), $P\in C^1(\mathcal D)$ and
  \begin{equation}\label{han1}
        P^{'}(x_{0})
        =\exp\int_{0}^{1}\frac{\partial f}{\partial x}\big(t,x(t;0,x_{0})\big)dt.
    \end{equation}
  \item[(ii)]{\upshape(\cite{Han})} If additionally $f_{i}\in C^{2}(I)$ for $i=1,2,\ldots,n$, then $P\in C^2(\mathcal D)$ and
  \begin{equation}\label{han2}
        \begin{split}
            P^{''}(x_{0})
            =P^{'}(x_{0})\int_{0}^{1}\frac{\partial^2 f}{\partial x^2}\big(t,x(t;0,x_{0})\big)\frac{\partial x}{\partial x_{0}}(t;0,x_{0})dt,
        \end{split}
        \end{equation}
  with $\frac{\partial x}{\partial x_{0}}(t;0,x_{0})=\exp\int_{0}^{t}\frac{\partial f}{\partial x}\big(s,x(s;0,x_{0})\big)ds$.
  \item[(iii)]  If additionally $f_{i}\in C^{3}(I)$ for $i=1,2,\ldots,n$, then $P\in C^3(\mathcal D)$ and
        \begin{equation}\label{han3}
        \begin{aligned}
            P^{'''}(x_{0})
            &=P^{'}(x_{0})\left[\frac{3}{2}\bigg(\frac{P^{''}(x_{0})}{P^{'}(x_{0})}\bigg)^{2}+\int_{0}^{1}\frac{\partial^3 f}{\partial x^3}\big(t,x(t;0,x_{0})\big)\bigg(\frac{\partial x}{\partial x_{0}}(t;0,x_{0})\bigg)^{2}dt\right].
        \end{aligned}
    \end{equation}
\end{itemize}
\end{lemma}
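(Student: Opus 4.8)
The plan is to treat statements (i) and (ii) as quoted from \cite{Han} (they are restated here only so that the three formulas appear together) and to derive (iii) by carrying the same variational argument one order further. First I would dispose of the regularity claim: under the hypothesis of (iii) each sub-equation $\dot x=f_i(x)$ is autonomous and $C^3$, so on the strip $[\tfrac{i-1}{n},\tfrac{i}{n}]\times I$ its flow depends $C^3$-smoothly on the initial value by classical smooth ODE theory; since $P=P_n\circ\cdots\circ P_1$ with $P_i(x_0)=x(\tfrac{i}{n};\tfrac{i-1}{n},x_0)$ (the decomposition used in the proof of Lemma \ref{lemma1}), the chain rule gives $P\in C^3(\mathcal D)$. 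What then remains is to establish \eqref{han3}.

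For $x_0\in\mathcal D$ I would put $x(t)=x(t;0,x_0)$ and introduce the variational functions $\phi=\partial x/\partial x_0$, $\psi=\partial^2 x/\partial x_0^2$, $\chi=\partial^3 x/\partial x_0^3$. These are continuous on $[0,1]$ (for $\phi,\psi$ by (i)--(ii); for $\chi$ because each $\varphi_i$ depends $C^3$-smoothly on data while $x(\cdot)$ is continuous at the switching times $t=\tfrac{i}{n}$), they satisfy $\phi(0)=1$, $\psi(0)=\chi(0)=0$, and on each open subinterval they obey the successive variational equations
\[
\begin{aligned}
&\dot\phi=\partial_x f\,\phi,\qquad \dot\psi=\partial_x f\,\psi+\partial_x^2 f\,\phi^2,\\
&\dot\chi=\partial_x f\,\chi+3\,\partial_x^2 f\,\phi\psi+\partial_x^3 f\,\phi^3,
\end{aligned}
\]
where $\partial_x^k f$ abbreviates $f_i^{(k)}(x(t))$ on the $i$-th interval, a bounded piecewise-continuous function of $t$. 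The first equation gives $\phi(t)=\exp\int_0^t\partial_x f\,ds>0$, hence $P'(x_0)=\phi(1)$; variation of parameters in the second then gives $\psi(t)=\phi(t)\int_0^t\partial_x^2 f\,\phi\,ds$, hence $P''(x_0)=P'(x_0)\int_0^1\partial_x^2 f\,\phi\,dt$ (recovering \eqref{han1}--\eqref{han2}).

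For (iii) I would apply variation of parameters once more to the $\chi$-equation (legitimate since $\phi>0$), obtaining $\chi(t)=\phi(t)\int_0^t\big(\partial_x^3 f\,\phi^2+3\,\partial_x^2 f\,\psi\big)ds$, and then collapse the double integral: writing $\psi(s)=\phi(s)G(s)$ with $G(s)=\int_0^s\partial_x^2 f\,\phi\,du$, so that $G'=\partial_x^2 f\,\phi$, one gets $3\int_0^t\partial_x^2 f\,\psi\,ds=3\int_0^t G'G\,ds=\tfrac32 G(t)^2$. Evaluating at $t=1$ with $\phi(1)=P'(x_0)$ and $G(1)=\int_0^1\partial_x^2 f\,\phi\,dt=P''(x_0)/P'(x_0)$ yields \eqref{han3}. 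An equivalent route is to differentiate the integral formula for $P''$ directly in $x_0$, using $\tfrac{d}{dx_0}\phi=\psi$ and the same identity $\int_0^1\partial_x^2 f\,\psi\,dt=\tfrac12\big(P''(x_0)/P'(x_0)\big)^2$.

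I do not expect a conceptual obstacle. The only delicate point is the discontinuity of $\partial_x f,\partial_x^2 f,\partial_x^3 f$ at the switching instants $t=\tfrac{i}{n}$; this is handled exactly as in \cite{Han}, since $x(\cdot)$ and $\phi,\psi,\chi$ stay continuous there and all the integrals above are ordinary integrals of bounded piecewise-continuous integrands, so each manipulation is valid on a subinterval and then summed. Thus the substantive part of (iii) is purely computational — correctly writing the third variational equation and recognizing the telescoping $3\int G'G=\tfrac32 G^2$ that produces the term $\tfrac32\big(P''(x_0)/P'(x_0)\big)^2$ in \eqref{han3}.
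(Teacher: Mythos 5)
Your proposal is correct and follows essentially the same route as the paper: the paper's Appendix~\ref{A.1} proof obtains \eqref{han3} by differentiating \eqref{han2} in $x_0$ and collapsing the resulting term $\int_0^1\partial_x^2 f\,\frac{\partial^2 x}{\partial x_0^2}\,dt$ into $\frac{1}{2}\big(P''(x_0)/P'(x_0)\big)^2$ via the nested-integral identity, which is exactly the ``equivalent route'' you mention and is computationally the same as your integration of the third variational equation with the telescoping $3\int G'G=\frac{3}{2}G^2$. The regularity and switching-time issues are handled as you describe, so no gap remains.
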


\begin{proposition}\label{propostion2.5}
    Suppose that $k\in\{1,2,3\}$ and $f_1,\ldots,f_n\in C^k(I)$ for equation \eqref{equation1}. Then the equation has at most $k$ periodic solution(s) in the region $[0,2\pi]\times I$,
    if $f_1^{(k)},\ldots,f_n^{(k)}$ are simultaneously non-negative (resp. non-positive) with at least one being positive (resp. negative) on $I$.
\end{proposition}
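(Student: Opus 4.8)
The plan is to convert the statement into a zero-counting problem for the displacement function $\Delta(x_0):=P(x_0)-x_0$ of the Poincar\'e map $P$ of equation \eqref{equation1} on its domain $\mathcal D\subseteq I$. Since each sub-equation of \eqref{equation1} generates an order-preserving flow in the one-dimensional variable $x$, so does their composition; consequently $\mathcal D$ is in fact a subinterval of $I$ (a solution issued from a point between two initial values in $\mathcal D$ is trapped between the corresponding solutions and therefore also defined up to $t=1$), $P$ is increasing on $\mathcal D$, and $P\in C^k(\mathcal D)$ whenever $f_1,\ldots,f_n\in C^k(I)$ by Lemma \ref{han}. A periodic solution of \eqref{equation1} is precisely a fixed point of $P$, i.e.\ a zero of $\Delta$, and its multiplicity equals the multiplicity of that zero. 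Hence it suffices to prove that $\Delta$ has at most $k$ zeros on $\mathcal D$ counted with multiplicity, and for this it suffices to show that $\Delta^{(k)}$ never vanishes on $\mathcal D$: indeed, if $\Delta$ had $k+1$ zeros with multiplicity, then a repeated use of Rolle's theorem (a zero of multiplicity $m$ of $\Delta$ being a zero of multiplicity $m-1$ of $\Delta'$, plus one extra zero of $\Delta'$ strictly between consecutive zeros of $\Delta$) would force $\Delta^{(j)}$ to have at least $k+1-j$ zeros with multiplicity for each $j$, so $\Delta^{(k)}$ would have a zero, a contradiction.

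Before computing $\Delta^{(k)}$, I would reduce to the non-negative case. If $f_1^{(k)},\ldots,f_n^{(k)}\le0$ on $I$ with one of them negative on $I$, the time reversal $t\mapsto 1-t$ transforms \eqref{equation1} into an equation of the same form with the pieces $f_1,\ldots,f_n$ replaced by $-f_n,\ldots,-f_1$ (in this order), it preserves the set of periodic solutions together with their multiplicities, and it turns the hypothesis into the non-negative one. So from now on assume $f_i^{(k)}\ge0$ on $I$ for all $i$ and $f_j^{(k)}>0$ on $I$ for some fixed $j$.

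The next step is to read off the sign of $\Delta^{(k)}$ from Lemma \ref{han}, using throughout that on the $i$-th subinterval $\frac{\partial^\ell f}{\partial x^\ell}(t,\cdot)=f_i^{(\ell)}(\cdot)$, that $\frac{\partial x}{\partial x_0}(t;0,x_0)=\exp\int_0^t\frac{\partial f}{\partial x}\,ds>0$, and that $P'(x_0)>0$. For $k=1$, formula \eqref{han1} gives $P'(x_0)=\exp\int_0^1\frac{\partial f}{\partial x}(t,x(t;0,x_0))\,dt$; the integrand is non-negative and strictly positive on the $j$-th subinterval, so $P'(x_0)>1$ and $\Delta'>0$ on $\mathcal D$. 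For $k=2$, formula \eqref{han2} gives $P''(x_0)=P'(x_0)\int_0^1\frac{\partial^2 f}{\partial x^2}\frac{\partial x}{\partial x_0}\,dt$, and the same reasoning (the integrand is non-negative, strictly positive on the $j$-th subinterval) gives $\Delta''=P''>0$ on $\mathcal D$. For $k=3$, I would first observe from \eqref{han2} that $\frac{P''(x_0)}{P'(x_0)}=\int_0^1\frac{\partial^2 f}{\partial x^2}\frac{\partial x}{\partial x_0}\,dt$, so that the term $\frac{3}{2}\big(\frac{P''}{P'}\big)^2$ in \eqref{han3} is automatically non-negative; since the remaining term $\int_0^1\frac{\partial^3 f}{\partial x^3}\big(\frac{\partial x}{\partial x_0}\big)^2\,dt$ is non-negative and strictly positive on the $j$-th subinterval, \eqref{han3} forces $\Delta'''=P'''>0$ on $\mathcal D$. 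In every case $\Delta^{(k)}>0$ throughout $\mathcal D$, which is exactly what the Rolle argument of the first paragraph requires.

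The step I expect to be the real obstacle is the case $k=3$: formula \eqref{han3} mixes the ``good'' term $\int_0^1\frac{\partial^3 f}{\partial x^3}\big(\frac{\partial x}{\partial x_0}\big)^2\,dt$, whose sign is governed by the hypothesis, with the ``unstructured'' term $\frac{3}{2}(P''/P')^2$, and in the non-positive situation these would pull against each other. The two points that resolve this are (i) recognizing via \eqref{han2} that $P''/P'$ is itself an integral of $\partial^2 f/\partial x^2$ weighted by the positive factor $\partial x/\partial x_0$, so that its square is harmless precisely in the non-negative case, and (ii) never attacking the non-positive case head-on but routing it back through time reversal. A secondary point that must still be handled carefully is the book-keeping in the generalized Rolle argument together with the fact that $\mathcal D$ is an interval on which $\Delta$ is $C^k$ — both soft consequences of one-dimensionality and Lemma \ref{han}.
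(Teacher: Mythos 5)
Your proposal is correct, and its core is the same as the paper's: read off the sign of $P^{(k)}$ from the integral formulas of Lemma \ref{han} (using that $\frac{\partial x}{\partial x_0}>0$ and that the hypothesis makes the integrand one-signed and not identically zero), then conclude by Rolle applied to the displacement function. You spell out the soft steps the paper leaves implicit ($\mathcal D$ an interval, $\Delta\in C^k$, the multiplicity bookkeeping), which is fine. The one genuine divergence is the reduction of the sign-reversed case, and here your route is actually the sound one. The paper reduces the non-positive $k=3$ case via $x\mapsto -x$; but if $g(y)=-f(-y)$ then $g'(y)=f'(-y)$, $g''(y)=-f''(-y)$, $g'''(y)=f'''(-y)$, so this substitution flips only the even-order derivatives and leaves the sign of $f'''$ unchanged — it does not convert the non-positive hypothesis into the non-negative one for $k=3$. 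Your time reversal $t\mapsto 1-t$ replaces the pieces by $-f_n,\ldots,-f_1$, negating every derivative, preserves the set of periodic solutions, and therefore does achieve the reduction (for all $k$, not just $k=3$). In short: same skeleton as the paper, but with the reduction step replaced by one that actually works; the only case where this matters is $k=3$ with $f_i^{(3)}\le 0$, which is precisely the case where $\frac{3}{2}\bigl(P''/P'\bigr)^2$ and $\int_0^1 \frac{\partial^3 f}{\partial x^3}\bigl(\frac{\partial x}{\partial x_0}\bigr)^2dt$ pull in opposite directions and a direct sign argument fails.
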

The proofs of Lemma \ref{han} and Proposition \ref{propostion2.5} follow directly from the approach in \cite{Lloyd} and simple calculations. For the sake of brevity and compactness of the main text, they are arranged in the Appendix \ref{A.1}.

\begin{proposition}\label{proposition2.6}
If $x=\lambda$ is a constant solution of equation \eqref{equation2}, then the Poincar\'{e} map $P(x_0)=x(1;0,x_{0})$ is $C^1$-differentiable at $\lambda$ and satisfies
\begin{align*}
  P'(\lambda)=\exp\left(\frac{1}{2}\left(f'_1(\lambda)+f'_2(\lambda)\right)\right).
\end{align*}
If additionally $f_1,f_2\in C^k(I)$, $P'(\lambda)=1$ and $P^{(2)}(\lambda)=\cdots=P^{(k-1)}(\lambda)=0$ for $k\in\{2,3\}$, then $P$ is $C^k$-differentiable at $\lambda$ and
\begin{align*}
  P^{(k)}(\lambda)=\left(f^{(k)}_1(\lambda)+f^{(k)}_2(\lambda)\right)\int_{0}^{\frac{1}{2}}\exp\left((k-1) f'_1(\lambda)t\right)dt.
\end{align*}
\end{proposition}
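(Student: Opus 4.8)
The plan is to reduce both parts of the statement to the integral formulas of Lemma~\ref{han}, specialized to the constant solution. First I would record three elementary observations: since $x=\lambda$ is a constant solution of \eqref{equation2}, necessarily $f_1(\lambda)=f_2(\lambda)=0$; the associated solution satisfies $x(t;0,\lambda)\equiv\lambda$ for all $t\in[0,1]$; and consequently $\lambda$ belongs to the domain $\mathcal D$ of $P$. Because \eqref{equation2} is exactly \eqref{equation1} with $n=2$, Lemma~\ref{han} applies at $x_0=\lambda$ and directly yields the claimed $C^1$- (resp.\ $C^k$-) differentiability, so it only remains to compute the derivatives. Substituting $x(t;0,\lambda)\equiv\lambda$ into \eqref{han1}, the integrand $\frac{\partial f}{\partial x}(t,\lambda)$ equals $f_1'(\lambda)$ on $[0,\tfrac12)$ and $f_2'(\lambda)$ on $[\tfrac12,1]$, so the integral equals $\tfrac12\big(f_1'(\lambda)+f_2'(\lambda)\big)$, giving $P'(\lambda)=\exp\big(\tfrac12(f_1'(\lambda)+f_2'(\lambda))\big)$. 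I would immediately isolate the consequence used below: $P'(\lambda)=1$ holds precisely when $f_2'(\lambda)=-f_1'(\lambda)$.

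For the higher derivatives, assume $P'(\lambda)=1$ and write $a:=f_1'(\lambda)$, so $f_2'(\lambda)=-a$. Evaluating $\frac{\partial x}{\partial x_0}(t;0,\lambda)=\exp\int_0^t\frac{\partial f}{\partial x}(s,\lambda)\,ds$ along the constant solution gives $e^{at}$ on $[0,\tfrac12)$ and $e^{a/2}e^{-a(t-1/2)}=e^{a(1-t)}$ on $[\tfrac12,1]$. For $k=2$, inserting this and $P'(\lambda)=1$ into \eqref{han2}, and using that $\frac{\partial^2 f}{\partial x^2}(t,\lambda)$ is $f_1''(\lambda)$ on $[0,\tfrac12)$ and $f_2''(\lambda)$ on $[\tfrac12,1]$, one obtains $P''(\lambda)=f_1''(\lambda)\int_0^{1/2}e^{at}\,dt+f_2''(\lambda)\int_{1/2}^1 e^{a(1-t)}\,dt$; the substitution $u=1-t$ converts the second integral into $\int_0^{1/2}e^{au}\,du$, so $P''(\lambda)=(f_1''(\lambda)+f_2''(\lambda))\int_0^{1/2}e^{at}\,dt$. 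For $k=3$, the hypothesis $P''(\lambda)=0$ (together with $P'(\lambda)=1$) makes the term $\tfrac32\big(P''(x_0)/P'(x_0)\big)^2$ in \eqref{han3} vanish at $x_0=\lambda$, leaving $P'''(\lambda)=\int_0^1\frac{\partial^3 f}{\partial x^3}(t,\lambda)\big(\frac{\partial x}{\partial x_0}(t;0,\lambda)\big)^2\,dt$; the same piecewise evaluation, now with $\big(\frac{\partial x}{\partial x_0}\big)^2$ equal to $e^{2at}$ on $[0,\tfrac12)$ and $e^{2a(1-t)}$ on $[\tfrac12,1]$, plus the substitution $u=1-t$, gives $P'''(\lambda)=(f_1'''(\lambda)+f_2'''(\lambda))\int_0^{1/2}e^{2at}\,dt$. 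Both cases coincide with the stated formula, the exponent being $(k-1)f_1'(\lambda)t$.

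The argument is essentially bookkeeping and I do not expect a genuine obstacle; the one point to be careful about is that the recombination of the two half-period integrals into a single integral over $[0,\tfrac12]$ rests on the relation $f_2'(\lambda)=-f_1'(\lambda)$ furnished by $P'(\lambda)=1$ — without it the right-hand sides would not collapse to the clean product forms claimed — and on matching the power of $\frac{\partial x}{\partial x_0}$ appearing in \eqref{han2} and \eqref{han3} with the exponent $k-1$.
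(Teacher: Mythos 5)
Your proposal is correct and follows essentially the same route as the paper: both apply the integral formulas of Lemma~\ref{han} along the constant solution $x(t;0,\lambda)\equiv\lambda$, use $P'(\lambda)=1\iff f_2'(\lambda)=-f_1'(\lambda)$ to rewrite the exponent on $[\tfrac12,1]$ as $f_1'(\lambda)(1-t)$, and fold the second half-period integral into the first by the substitution $u=1-t$. The only difference is that you write out the $k=3$ case explicitly, which the paper dismisses as ``a similar argument.''
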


\begin{proof}
  The first assertion is trivial by applying statement (i) of Lemma \ref{han} to equation \eqref{equation2}. Next we verify the second assertion for $k=2$, and the same can be done for $k=3$ using a similar argument. Since $P'(\lambda)=1$ means $f'_2(\lambda)=-f'_1(\lambda)$, we get by assumption and statement (ii) of Lemma \ref{han} that $P$ is $C^2$-differentiable at $\lambda$ and satisfies
  \begin{align*}
    P''(\lambda)&=\int^{\frac{1}{2}}_{0}f''_1(\lambda)\exp\left(f'_1(\lambda)t\right)dt
                 +\int^{1}_{\frac{1}{2}}f''_2(\lambda)\exp\left(\frac{1}{2}f'_1(\lambda)+f'_2(\lambda)\Big(t-\frac{1}{2}\Big)\right)dt\\
                &=f''_1(\lambda)\int^{\frac{1}{2}}_{0}\exp\left(f'_1(\lambda)t\right)dt
                 +f''_2(\lambda)\int^{\frac{1}{2}}_{0}\exp\left(f'_1(\lambda)t\right)dt.
  \end{align*}
  Hence, the second assertion is true for $k=2$.
\end{proof}

\section{Proofs of Theorems \ref{th1'}, \ref{coro1} and \ref{th2}}
This section is divided into two parts. In the first subsection, we aim to give a result for the derivatives of the Poincar\'{e} map of equation \eqref{equation1} that includes Theorems \ref{th1'} and \ref{coro1} as the important cases. Then, we will additionally characterize the distribution of the periodic solutions of equation \eqref{equation2} in the second subsection, proving Theorem \ref{th2}.
\subsection{Derivative formulas in discrete form of Poincar\'{e} map}
We shall begin by recalling some notations defined in Theorem \ref{th1'} for the solution $x(t;t_0,x_{0})$ of equation \eqref{equation1}, that is,
\begin{align*}
V:=\left\{\rho\in \mathcal D\left|f_i\Big(x\Big(\frac{i-1}{n};0,\rho\Big)\Big)\neq0,\ i=1,2,\cdots,n\right.\right\}, \text{ and }
x_i:=x\Big(\frac{i}{n};0,x_0\Big),
\end{align*}
where $\mathcal D$ is the domain of the Poincar\'{e} map $P(x_0)=x(1;0,x_{0})$.
Also, we adopt the notation $P_i$, as in the proof of Lemma \ref{lemma1}, to represent the Poincar\'{e} map of equation \eqref{equation1} from $t=\frac{i-1}{n}$ to $t=\frac{i}{n}$, $i=1,2,\ldots,n$. Then, for $x_0\in V$,
\begin{align}\label{eq5}
P_i(x_{i-1})=x\Big(\frac{i}{n};\frac{i-1}{n},x_{i-1}\Big)=x_i,\indent i=1,2,\ldots,n.
\end{align}
We naturally have the decomposition $P=P_n\circ\cdots\circ P_1$.
\begin{lemma}\label{th1}
Suppose that $x_0\in V$. Then the Poincar\'{e} maps $P$ and $P_i$ (for $i=1,2,\ldots,n$) of equation \eqref{equation1} are $C^2$-differentiable at $x_0$ and $x_{i-1}$, respectively, satisfying
    \begin{align}
        &P^{'}(x_{0})=\prod_{i=1}^{n}P_{i}^{'}(x_{i-1}),
        \label{9}\\
        &P^{''}(x_{0})=P^{'}(x_{0})\cdot \sum_{i=1}^{n}\left(\frac{P^{''}_{i}(x_{i-1})}{P^{'}_{i}(x_{i-1})}\cdot \prod_{j=1}^{i-1}P^{'}_{j}(x_{j-1})\right),
        \label{8}
    \end{align}
and
\begin{equation}\label{eq6}
    P_{i}^{'}(x_{i-1})=\frac{f_{i}(x_{i})}{f_{i}(x_{i-1})},\quad P_{i}^{''}(x_{i-1})=\frac{f_{i}(x_{i})\big(f_{i}^{'}(x_{i})-f_{i}^{'}(x_{i-1})\big)}{f_{i}^{2}(x_{i-1})}.
\end{equation}
If additionally $f_{i}\in  C^{2}(I)$ for $i=1,2,\cdots, n$, then $P$ and $P_i$ are $C^3$-differentiable at $x_0$ and $x_{i-1}$, respectively, satisfying
\begin{align}\label{7}
    \begin{split}
    P^{'''}(x_{0})
    =&P'(x_0)\left[\frac{3}{2}\Bigg(\frac{P^{''}(x_{0})}{P^{'}(x_{0})}\Bigg)^2\right.\\
    &+\left. \sum_{i=1}^{n}\Bigg(\frac{P^{'''}_{i}(x_{i-1})}{P^{'}_{i}(x_{i-1})}-\frac{3}{2} \bigg(\frac{P^{''}_{i}(x_{i-1})}{P^{'}_{i}(x_{i-1})}\bigg)^{2}\Bigg) \Bigg(\prod_{j=1}^{i-1}P^{'}_{j}(x_{j-1})\Bigg)^{2}\right],
        \end{split}
    \end{align}
and
\begin{align}\label{eq8}
\begin{split}
    \frac{P^{'''}_{i}(x_{i-1})}{P^{'}_{i}(x_{i-1})}&-\frac{3}{2} \bigg(\frac{P^{''}_{i}(x_{i-1})}{P^{'}_{i}(x_{i-1})}\bigg)^{2}\\
    &=\frac{2f''_{i}(x_{i})f_{i}(x_{i})-\big(f'_{i}(x_{i})\big)^{2}-2f''_{i}(x_{i-1})f_{i}(x_{i-1})+\big(f'_{i}(x_{i-1})\big)^{2}}
     {2f_{i}^{2}(x_{i-1})}.
\end{split}
\end{align}
\end{lemma}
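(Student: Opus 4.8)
The plan is to prove the chain-rule-type formulas \eqref{9}, \eqref{8}, \eqref{7} first, and then to compute the single-step formulas \eqref{eq6} and \eqref{eq8} directly from the autonomous sub-equations.

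\textbf{Step 1: The composition formulas.} Since $P=P_n\circ\cdots\circ P_1$ and, by hypothesis $x_0\in V$, each $P_i$ is $C^2$ (and $C^3$ under the extra assumption) near $x_{i-1}$, the map $P$ is $C^2$ (resp. $C^3$) near $x_0$. Formula \eqref{9} is just the chain rule applied iteratively, using $P_i(x_{i-1})=x_i$ from \eqref{eq5}. For \eqref{8}, one differentiates $P'(x_0)=\prod_{i=1}^n P_i'(x_{i-1})$ once more, keeping track that $\frac{d}{dx_0}x_{i-1}=\prod_{j=1}^{i-1}P_j'(x_{j-1})$; this is a routine logarithmic-derivative computation that I would present for a two-fold composition and then invoke induction. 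The third-order formula \eqref{7} is obtained in the same spirit but is more delicate: I would introduce the Schwarzian-type quantity $\mathcal S_i:=\frac{P_i'''}{P_i'}-\frac32\big(\frac{P_i''}{P_i'}\big)^2$ (which is essentially the Schwarzian derivative up to a factor) and exploit its behavior under composition; the cleanest route is to differentiate \eqref{8} once more and simplify, collecting the terms into the combination $\frac32(P''/P')^2+\sum_i \mathcal S_i(\prod_{j<i}P_j')^2$. This bookkeeping is the most error-prone part, so I would do it carefully by induction on $n$, checking the base case $n=1$ and the step $n\mapsto n+1$ using the two-map composition identity for the Schwarzian.

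\textbf{Step 2: The single-step formulas.} Here I would work entirely within one autonomous equation $\frac{dx}{dt}=f_i(x)$ on the time interval of length $\frac1n$. Write $P_i(\rho)=\varphi_i(\frac in;\frac{i-1}{n},\rho)$. From $f_i(x_{i-1})\neq0$, the solution is strictly monotone near $x_{i-1}$, and separating variables gives $\int_{x_{i-1}}^{P_i(x_{i-1})}\frac{du}{f_i(u)}=\frac1n$. Differentiating this identity with respect to $x_{i-1}$ yields $\frac{P_i'(x_{i-1})}{f_i(x_i)}-\frac{1}{f_i(x_{i-1})}=0$, i.e. the first formula in \eqref{eq6}; differentiating once more produces the second. (Alternatively one invokes Lemma \ref{han}(i)--(ii) applied to a single autonomous equation: $P_i'=\exp\int f_i'\,dt=\exp\int \frac{f_i'(x)}{f_i(x)}\,dx=\frac{f_i(x_i)}{f_i(x_{i-1})}$.) For \eqref{eq8}, the slickest approach is to note that $\mathcal S_i$ is invariant under affine reparametrization of the independent variable, so it equals the Schwarzian-type quantity of the \emph{time-$t_0$ flow map} $\rho\mapsto \varphi_i(t;t_0,\rho)$ evaluated at $t=\frac in$; then differentiate the implicit relation $\int_{x_{i-1}}^{x_i}\frac{du}{f_i(u)}=\text{const}$ three times, or simply substitute \eqref{eq6} and the known expression for $P_i'''$ (from Lemma \ref{han}(iii) specialized to one autonomous sub-equation) into the definition of $\mathcal S_i$ and simplify. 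The resulting symmetric expression in $f_i, f_i', f_i''$ evaluated at $x_i$ and $x_{i-1}$ is exactly the right-hand side of \eqref{eq8}.

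\textbf{Step 3: Assembly.} Substituting \eqref{eq6} into \eqref{9} and \eqref{8} gives the closed forms; substituting \eqref{eq6} and \eqref{eq8} into \eqref{7} gives the third-order formula. I expect the main obstacle to be the third-order composition identity \eqref{7}: tracking the cross-terms when differentiating a product of $n$ composed maps is where sign errors and missing factors of $\frac32$ tend to creep in, and the cleanest safeguard is to phrase everything through the Schwarzian-type operator $\mathcal S$ and use its known cocycle/composition law, reducing the whole computation to the $n=2$ case plus induction. Everything else — the $C^2$/$C^3$ regularity (immediate from the composition of $C^2$/$C^3$ maps) and the single-step formulas (one or two differentiations of a separated-variables integral) — is routine.
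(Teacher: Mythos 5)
Your proposal is correct, and for most of the lemma it coincides with the paper's own proof: the paper likewise derives the one-step formulas \eqref{eq6} and \eqref{eq8} by differentiating the separated-variables identity $\int_{x_{i-1}}^{x_i}\frac{dx}{f_i(x)}=\frac1n$ (which is also where the $C^2$/$C^3$ regularity of each $P_i$, and hence of $P$, actually comes from — so your Step 1 regularity claim is only justified once Step 2 is done, a matter of presentation rather than substance), and it obtains \eqref{9} and \eqref{8} by the chain rule exactly as you describe. The genuine divergence is in the third-order composition identity \eqref{7}. The paper differentiates \eqref{8} directly, writes $P'''=P''M_1+P'(M_2+M_3)$ for three explicit sums, and then collects the cross-terms in $M_3$ via the elementary identity $2\sum_{1\le l<i\le n}a_ia_l=\big(\sum_i a_i\big)^2-\sum_i a_i^2$ applied to $a_i=\frac{P_i''(x_{i-1})}{P_i'(x_{i-1})}\prod_{j<i}P_j'(x_{j-1})$; this is a self-contained one-shot computation with no induction. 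You instead recognize that \eqref{7} is precisely the $n$-fold cocycle law for the Schwarzian derivative $\mathcal S(g)=\frac{g'''}{g'}-\frac32\big(\frac{g''}{g'}\big)^2$, namely $\mathcal S(g\circ f)=(\mathcal S(g)\circ f)\cdot(f')^2+\mathcal S(f)$ iterated by induction on $n$. Both routes are valid and yield the same formula; yours is more conceptual and explains \emph{why} the combination $\frac32(P''/P')^2$ appears, at the cost of importing (or re-proving) the two-map Schwarzian composition identity, while the paper's is longer on bookkeeping but entirely elementary. Your alternative derivation of \eqref{eq6} via Lemma \ref{han}(i) specialized to one autonomous sub-equation is also a legitimate shortcut the paper does not use.
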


\begin{proof}
In order to prove the assertion, we first show the differentiability of $P_i$ on the set $V_i:=\{x_{i-1}|x_0\in V\}$ and verify the expressions in \eqref{eq6} and \eqref{eq8}, $i=1,2,\ldots,n$.

It is clear that $f_{i}(x(t;0,x_0))\neq 0$ for $(t,x_0)\in\big[\frac{i-1}{n},\frac{i}{n}\big]\times V$, $i=1,2,\ldots,n$.
Then, under the assumption of $x_0\in V$,
\begin{align}\label{eq7}
  \int_{x_{i-1}}^{x_i}\frac{dx}{f_i(x)}
  =\int_{\frac{i-1}{n}}^{\frac{i}{n}}\frac{dx(t;0,x_0)}{f_i(x(t;0,x_0))}
  =\int_{\frac{i-1}{n}}^{\frac{i}{n}}dt
  =\frac{1}{n},\indent i=1,2,\ldots,n.
\end{align}
In view of $x_i=P_i(x_{i-1})$ from \eqref{eq5}, this characterizes the map $P_i$ over $V_i$.
Note that $V_i$ is open. By the initial assumption $f_i\in C^1(I)$, we can take the derivative of \eqref{eq7} with respect to $x_{i-1}$, and then obtain
\begin{equation}\label{eq9}
    \frac{dx_{i}}{dx_{i-1}}\cdot \frac{1}{f_{i}(x_{i})}-\frac{1}{f_{i}(x_{i-1})}=0,\ \text{ i.e., }\
        P_{i}^{'}(x_{i-1})=\frac{dx_{i}}{dx_{i-1}}=\frac{f_{i}(x_{i})}{f_{i}(x_{i-1})}.
\end{equation}
Accordingly, $P_{i}$ is $C^2$-differentiable on $V_{i}$, satisfying
\begin{equation*}
    P_{i}^{''}(x_{i-1})
    =\frac{d}{dx_{i-1}}\left(\frac{f_{i}(x_{i})}{f_{i}(x_{i-1})}\right)
    =\frac{f_{i}(x_{i})\big(f_{i}^{'}(x_{i})-f_{i}^{'}(x_{i-1})\big)}{f_{i}^{2}(x_{i-1})}.
\end{equation*}
The expressions in \eqref{eq6} are valid.
If additionally $f_{i}\in  C^{2}(I)$, then it follows from \eqref{eq6} that $P_{i}$ is $C^3$-differentiable on $V_{i}$. Also the expression in \eqref{eq8} can be verified by a direct calculation using \eqref{eq6}.

Now let us consider the map $P$. It is easy to see that the assertion for the differentiability of $P$ on $V$ follows directly from the above argument and the decomposition $P=P_n\circ\cdots\circ P_1$, and so does that for equality \eqref{9} taking the chain rule for derivatives into account.
Also, by differentiating \eqref{9} we get
\begin{equation*}
    P^{''}(x_{0})=\prod_{i=1}^{n}P^{'}_{i}(x_{i-1})\cdot \sum_{i=1}^{n}\frac{P^{''}_{i}(x_{i-1})\cdot \frac{dx_{i-1}}{dx_{0}}}{P^{'}_{i}(x_{i-1})}
    =P^{'}(x_{0})\cdot \sum_{i=1}^{n}\left(\frac{P^{''}_{i}(x_{i-1})}{P^{'}_{i}(x_{i-1})}\cdot \prod_{j=1}^{i-1}P^{'}_{j}(x_{j-1})\right),
\end{equation*}
where in the second equality we use \eqref{9} again and the fact from \eqref{eq9} that
\begin{align}\label{10}
\frac{dx_{i-1}}{dx_0}=\prod_{j=1}^{i-1}\frac{dx_{j}}{dx_{j-1}}=\prod_{j=1}^{i-1}P^{'}_{j}(x_{j-1}),\indent i=1,2,\ldots,n.
\end{align}
Thus, equality \eqref{8} holds.

It remains to prove \eqref{7}. Similar to the process for \eqref{9}, we differentiate \eqref{8}, take into account \eqref{10}, and then obtain that
$P'''(x_0)=P''(x_0)M_1(x_0)+P'(x_0)\big(M_2(x_0)+M_3(x_0)\big)$, where
\begin{align}\label{11}
    \begin{split}
        &M_1(x_{0})
        =
        \sum_{i=1}^{n}\left(\frac{P^{''}_{i}(x_{i-1})}{P^{'}_{i}(x_{i-1})}\cdot \prod_{j=1}^{i-1}P^{'}_{j}(x_{j-1})\right),\\
        &M_2(x_{0})
        =
        \sum_{i=1}^{n}\left(\Bigg(\frac{P^{'''}_{i}(x_{i-1})}{P^{'}_{i}(x_{i-1})}- \bigg(\frac{P^{''}_{i}(x_{i-1})}{P^{'}_{i}(x_{i-1})}\bigg)^{2}\Bigg)
        \Bigg(\prod_{j=1}^{i-1}P^{'}_{j}(x_{j-1})\Bigg)^{2}\right),\\
        &M_3(x_{0})
        =
        \sum_{i=1}^{n}\left(\frac{P^{''}_{i}(x_{i-1})}{P^{'}_{i}(x_{i-1})}\cdot \prod_{j=1}^{i-1}P^{'}_{j}(x_{j-1})\cdot \sum_{l=1}^{i-1}\Bigg(\frac{P^{''}_{l}(x_{l-1})}{P^{'}_{l}(x_{l-1})}\cdot \prod_{k=1}^{l-1}P^{'}_{k}(x_{k-1})\Bigg)\right).
    \end{split}
    \end{align}
Note that, due to \eqref{8}, the functions $M_1$ and $M_3$ can be rewritten as $M_1(x_0)=\frac{P''(x_0)}{P'(x_0)}$ and
\begin{align*}
\begin{split}
&M_3(x_0)\\
&\ =
\sum_{1\leq l<i\leq n}
\Bigg(\frac{P^{''}_{i}(x_{i-1})}{P^{'}_{i}(x_{i-1})}\cdot \prod_{j=1}^{i-1}P^{'}_{j}(x_{j-1})\Bigg)
\Bigg(\frac{P^{''}_{l}(x_{l-1})}{P^{'}_{l}(x_{l-1})}\cdot \prod_{k=1}^{l-1}P^{'}_{k}(x_{k-1})\Bigg)\\
&\ =\frac{1}{2}
\left(\sum_{i=1}^{n}\Bigg(\frac{P^{''}_{i}(x_{i-1})}{P^{'}_{i}(x_{i-1})}\cdot \prod_{j=1}^{i-1}P^{'}_{j}(x_{j-1})\Bigg)\right)^2
-\frac{1}{2}
\sum_{i=1}^{n}\Bigg(\frac{P^{''}_{i}(x_{i-1})}{P^{'}_{i}(x_{i-1})}\cdot\prod_{j=1}^{i-1}P^{'}_{j}(x_{j-1})\Bigg)^2\\
&\ =\frac{1}{2}
\Bigg(\frac{P''(x_0)}{P'(x_0)}\Bigg)^2
-\frac{1}{2}
\sum_{i=1}^{n}\Bigg(\frac{P^{''}_{i}(x_{i-1})}{P^{'}_{i}(x_{i-1})}\cdot\prod_{j=1}^{i-1}P^{'}_{j}(x_{j-1})\Bigg)^2,
\end{split}
\end{align*}
respectively.
Hence, the substitutions of these identities in \eqref{11} immediately yield \eqref{7}.
This finishes the proof of the result.
\end{proof}

By means of Lemma \ref{th1}, we are able to easily prove Theorem \ref{th1'} and Theorem \ref{coro1}. We remark that the lemma actually provides an additional result (the second assertion), which we believe, could be useful in the development of further research on the issue.

\begin{proof}[Proof of Theorem \ref{th1'}]
The assertion readily follows from Lemma \ref{th1} by substituting \eqref{eq6} into \eqref{9} and \eqref{8}.
\end{proof}

\begin{proof}[Proof of Theorem \ref{coro1}]
It is a simple application of Theorem \ref{th1'} to equation \eqref{equation2}. In fact, if $x(t;0,x_0)$ is a non-constant periodic solution of the equation, then $x_2=x(1;0,x_0)=x_0$ and $f_i(x_j)\neq0$ for $i,j\in\{1,2\}$. Thus, the first assertion is directly obtained using Theorem \ref{th1'}. When $P'(x_0)=1$, we have by \eqref{12} in the theorem that
    \begin{align}\label{13}
        \begin{split}
            P^{''}(x_{0})=&\frac{f_{1}^{'}(x_{1})-f_{1}^{'}(x_{0})}{f_{1}(x_{0})}+\frac{f_{2}^{'}(x_{0})-f_{2}^{'}(x_{1})}{f_{2}(x_{1})}\cdot \frac{f_{1}(x_{1})}{f_{1}(x_{0})}\\
            =&
            \frac{1}{f_{1}(x_{0})f_{2}(x_{1})}
    \begin{vmatrix}
            f_{1}^{'}(x_{1})-f_{1}^{'}(x_{0}) & f_{2}^{'}(x_{1})-f_{2}^{'}(x_{0})\\
            f_{1}(x_{1}) & f_{2}(x_{1})
    \end{vmatrix}.
        \end{split}
        \end{align}
Moreover, from the first expression in \eqref{12}, $P'(x_0)=1$ also implies that $\frac{f_1(x_1)}{f_1(x_0)}=\frac{f_2(x_1)}{f_2(x_0)}$. This together with the first equality in \eqref{13} yields
\begin{equation*}
        \begin{split}
            P^{''}(x_{0})=&\frac{f_{1}^{'}(x_{1})-f_{1}^{'}(x_{0})}{f_{1}(x_{0})}+\frac{f_{2}^{'}(x_{0})-f_{2}^{'}(x_{1})}{f_{2}(x_{1})}\cdot \frac{f_{2}(x_{1})}{f_{2}(x_{0})}\\
            =
            & \frac{1}{f_{1}(x_{0})f_{2}(x_{0})}
    \begin{vmatrix}
            f_{1}^{'}(x_{1})-f_{1}^{'}(x_{0}) & f_{2}^{'}(x_{1})-f_{2}^{'}(x_{0})\\
            f_{1}(x_{0}) & f_{2}(x_{0})
    \end{vmatrix}.
        \end{split}
        \end{equation*}
Accordingly, the validity for the expressions of $P''(x_0)$ is proved and so is our result.
\end{proof}

\subsection{Characterization for the distribution of periodic solutions}
We now turn to the proof of Theorem \ref{th2}. In the following we will continue to use the notations $V$ and $x_i$'s defined in Theorem \ref{th1'}, specific to equation \eqref{equation2}. Also for $x_0\in V$, we denote by
\begin{align*}
  A_{x_0}=\big\{x(t;0,x_0)\big|t\in[0,1]\big\}\ \text{and}\
  A_{x_0,i}=\Big\{x(t;0,x_0)\Big|t\in\Big[\frac{i-1}{2},\frac{i}{2}\Big]\Big\},\indent i=1,2.
\end{align*}
\begin{proof}[Proof of Theorem \ref{th2}.]
        (i.1) The assertion in the statement is trivial.

        (i.2)
        Clearly, if $x(t;0,x_0)$ is a non-constant periodic solution of equation \eqref{equation2}, then $f_1|_{A_{x_0,1}}$ and $f_2|_{A_{x_0,2}}$ have definite and opposite signs. For the sake of clarity we only consider the case $f_1|_{A_{x_0,1}}>0$, and the other one follows in exactly the same argument.
        In this case, $A_{x_0}=A_{x_0,1}=A_{x_0,2}=[x_0,x_1]$ and therefore $f_1f_2<0$ on $A_{x_0}$. We have that $A_{x_0}\subset I\backslash\{x|f_1(x)f_2(x)=0\}=\bigcup^{m}_{i=0}I_i$. The first assertion holds.

        Next we prove the second assertion by contradiction.
        Let
        \begin{align*}
        H(x)=\frac{1}{f_1(x)}+\frac{1}{f_2(x)},\indent x\in \bigcup_{i=0}^{m}I_i.
        \end{align*}
        Suppose that $f_{1}+f_{2}$ does not change sign on $A_{x_0}$. Then,
        \begin{align}\label{14}
          \int_{x_0}^{x_1}H(x)dx
          =\int_{A_{x_0}}\frac{f_1(x)+f_2(x)}{f_1(x)f_2(x)}dx
          \neq0,
        \end{align}
        where in the second inequality we take into account the fact that $f_1+f_2\not\equiv0$ on $A_{x_0}$ from assumption.
        However, since $x(t;0,x_0)$ is periodic (i.e., $x_2=x(1;0,x_0)=x_0$),
        \begin{align*}
          \int_{x_0}^{x_1}H(x)dx
          =\int_{x_0}^{x_1}\frac{dx}{f_1(x)}-\int_{x_1}^{x_2}\frac{dx}{f_2(x)}
          =\int_{0}^{\frac{1}{2}}dt-\int_{\frac{1}{2}}^{1}dt
          =0.
        \end{align*}
        This contradicts \eqref{14}. Consequently, $f_1+f_2$ must be sign-changing on $A_{x_0}$.

        (ii) Set $F(x_0):=\int^{x_1}_{x_0}H(x)dx$.
        We continue to consider the function $F$ on $U:=\{\rho\in V|A_{\rho}\subset E\}$ and then verify the first assertion.
        It is easy to see that for $x_0\in U$,
        \begin{align}\label{18}
          F(x_0)=\int_{x_0}^{x_1}\frac{dx}{f_1(x)}-\int_{x_1}^{x_2}\frac{dx}{f_2(x)}+\int_{x_0}^{x_2}\frac{dx}{f_2(x)}=\int_{x_0}^{x_2}\frac{dx}{f_2(x)},
        \end{align}
        where the second equality follows from the identities $\int_{x_{0}}^{x_1}\frac{dx}{f_1(x)}=\int_{x_{1}}^{x_2}\frac{dx}{f_2(x)}=\frac{1}{2}$. Furthermore, according to Theorem \ref{th1'},
        \begin{align}\label{17}
        \begin{split}
          F'(x_0)
          &=\frac{dx_2}{dx_0}\cdot\frac{1}{f_2(x_2)}-\frac{1}{f_2(x_0)}
           =\frac{f_2(x_2)f_1(x_1)}{f_2(x_1)f_1(x_0)}\cdot\frac{1}{f_2(x_2)}-\frac{1}{f_2(x_0)}\\
          &
           =\frac{f_1(x_1)\big(f_1(x_0)+f_2(x_0)\big)-f_1(x_0)\big(f_1(x_1)+f_2(x_1)\big)}{f_1(x_0)f_2(x_0)f_2(x_1)}.
        \end{split}
        \end{align}

        Now assume for a contradiction that there exist two periodic solutions of equation \eqref{equation2} located in the region $\{(t,x)|t\in [0,1], x\in E\}$, with the initial values $p_{0}$ and $q_{0}$ such that $p_{0}<q_{0}$. Then, for each $x_0\in(p_0,q_0)$, the solution $x(t;0,x_0)$ of the equation is well-defined on $[0,1]$ and satisfies $x(t;0,p_0)< x(t;0,x_0)< x(t;0,q_0)$. We get that $[p_{0},q_{0}\big]\subset U$.
        The following argument will be still restricted to the case $f_1|_E>0$ because the case $f_1|_E<0$ can be treated similarly.
        Denote by $p_1=x(\frac{1}{2};0,p_0)$ and $q_1=x(\frac{1}{2};0,q_0)$. As in the proof of statement (i.2), one has $A_{p_0}=[p_0,p_1]$ and $A_{q_0}=[q_0,q_1]$.
        Let $a$ be the unique point in $\text{Int}(E)$ at which $f_1+f_2$ changes sign. Applying statement (i.2) shows that $a\in\text{Int}(A_{x_0})= (x_0,x_1)$ for $x_0\in\{p_0,q_0\}$. Thus, for $x_0\in (p_{0},q_{0}\big)$ we obtain
        \begin{align*}
          p_{0}<x_0<q_{0}<a<p_{1}<x_1<q_{1},
        \end{align*}
        which implies that $\big(f_1(x_0)+f_2(x_0)\big)\big(f_1(x_1)+f_2(x_1)\big)\leq0$. Note that $f_1+f_2\not\equiv0$ on $(p_{0},q_{0}\big)$ from assumption, and $f_1(x_0)f_1(x_1)>0$ due to $f_1|_E>0$.
        Hence, we know by \eqref{17} that $F'$ does not change sign and does not vanish identically on $[p_{0},q_{0}]$. This yields $F(p_0)\neq F(q_0)$, but contradicts $F(p_0)= F(q_0)=0$ from \eqref{18}. Consequently, equation \eqref{equation2} has at most one periodic solution in the region.

        Finally, if $x=x(t;0,x_0)$ is a periodic solution in $\{(t,x)|t\in [0,1], x\in E\}$, then it is non-constant. According to Theorem \ref{coro1},
        \begin{align*}
          P'(x_0)=1+\frac{f_1(x_1)\big(f_1(x_0)+f_2(x_0)\big)-f_1(x_0)\big(f_1(x_1)+f_2(x_1)\big)}{f_1(x_0)f_2(x_1)},
        \end{align*}
        where $P(x_0)=x(1;0,x_0)$. Recall that $\big(f_1(x_0)+f_2(x_0)\big)\big(f_1(x_1)+f_2(x_1)\big)\leq0$ and $f_1(x_0)f_1(x_1)>0$ from the above argument. Thus, when $f_1+f_2$ has only one zero on $E$, we have that $P'(x_0)\neq1$, i.e., the periodic solution is hyperbolic.
\end{proof}

\section{Applications to single-species models}
We are going to apply our main results to the study of limit cycles of models \eqref{mosquito}, \eqref{xiaodongmei} and \eqref{main equation}, respectively, following the procedure introduced in Section 1.

\subsection{Application 1}
We first prove Theorem \ref{application1}.
The argument can be restricted to equation \eqref{xiaodongmei} in one period (with $k=0$). Using Lemma \ref{lemma1}, the equation is normalized to
\begin{equation}\label{xiaodongmei2}
    \frac{dx}{dt}=
    \left\{
  \begin{aligned}
        &g_1(x)=2T_{1}g(x),&t\in \Big[0,\frac{1}{2}\Big), x\in\mathbb R_0^+,\\
        &g_2(x)=2(T-T_{1})(g(x)-h),&t\in \Big[\frac{1}{2},1\Big], x\in\mathbb R_0^+,
    \end{aligned}
  \right.
\end{equation}
where $h>0$, $T>T_1>0$ and $g$ satisfies hypothesis (H).
In the following we pre-analyze equation \eqref{xiaodongmei2} in two steps.\vskip0.2cm

\textbf{Step 1:} Determine the common zeros of $g_{1}$ and $g_{2}$, and obtain $\mathcal U=\mathbb R_0^+\backslash\{x|g_1(x)g_2(x)=0\}$.

According to hypothesis (H), the function $g$ is strictly increasing on $[0,k_0]$ and strictly decreasing on $[k_0,+\infty)$, respectively. Let $h^*$ be defined as in Theorem \ref{application1}, i.e., $h^*=g(k_0)$. Then, the following facts are clear (see Figure \ref{fig-app1} for illustration):
\begin{itemize}
    \item $g_{1}$ has exactly two zeros, $x=0$ and $x=K$.
    \item If $0<h<h^*$, then $g_{2}$ has exactly two zeros, denoted by $x=\lambda_{1}(h)$ and $x=\lambda_{2}(h)$, satisfying $\lambda_{1}\in(0,k_{0})$ and $\lambda_{2}\in(k_0,K)$.
    \item If $h=h^*$, then $g_{2}$ has only one zero, $x=k_{0}$.
    \item If $h>h^*$, then $g_{2}$ has no zeros.
\end{itemize}
Thus, there are no common zeros of $g_{1}$ and $g_{2}$, i.e., 
equation \eqref{xiaodongmei2} has no constant limit cycles. And we have that
\begin{itemize}
  \item[(a)] If $0<h<h^*$, then $\mathcal U=(0,\lambda_1)\cup(\lambda_1,\lambda_2)\cup(\lambda_2,K)\cup(K,+\infty)$.
  \item[(b)] If $h=h^*$, then $\mathcal U=(0,k_0)\cup(k_0,K)\cup(K,+\infty)$.
  \item[(c)] If $h>h^*$, then $\mathcal U=(0,K)\cup(K,+\infty)$.
\end{itemize}

\textbf{Step 2:} Determine the sign changes of $g_{1}+g_{2}$.

Since $g_{1}(x)+g_{2}(x)=2Tg(x)-2(T-T_{1})h$, the following facts are also clear from the monotonicity of $g$ and a simple calculation  (see again Figure \ref{fig-app1} for illustration):
\begin{itemize}
    \item[(d)] If $0<h<\frac{T}{T-T_{1}}h^*$, then $g_1+g_2$ has exactly two zeros, denoted by $x=\mu_1(h)$ and $x=\mu_2(h)$, satisfying
    \begin{itemize}
      \item[(d.1)] $\mu_{1}\in(0,\lambda_{1})$ and $\mu_2\in(\lambda_{2},K)$, if additionally $0<h<h^*$,
      \item[(d.2)] $\mu_{1}\in(0,k_{0})$ and $\mu_2\in(k_{0},K)$, if additionally $h^*\leq h<\frac{T}{T-T_{1}}h^*$.
    \end{itemize}
    Furthermore, $g_{1}+g_{2}$ is positive on $(\mu_{1},\mu_{2})$ and negative on $(0,\mu_{1})\cup (\mu_{2},+\infty)$, respectively.
    \item[(e)] If $h\geq\frac{T}{T-T_{1}}h^*$, then $g_{1}+g_{2}\leq 0$.
\end{itemize}
Hence, on account of statements (a)--(e), the number of sign changes of $g_1+g_2$ on the connected components of $\mathcal U$, denoted by $s_{(\cdot,\cdot)}$, can be summarized in Table \ref{table4} below.
\begin{table}[!ht]\vspace{-0.2cm}
        \centering
        \renewcommand\arraystretch{1.5}
        \begin{threeparttable}
        \begin{tabular}{|c|c|}
            \hline
             $h$ & $s_{(\cdot,\cdot)}$\\
            \hline
           $0<h<h^*$ & $s_{(0,\lambda_1)}=s_{(\lambda_2,K)}=1$ and $s_{(\lambda_1,\lambda_1)}=s_{(K,+\infty)}=0$ \\
            \hline
           $h=h^*$ & $s_{(0,k_0)}=s_{(k_0,K)}=1$ and $s_{(K,+\infty)}=0$\\
            \hline
           $h^*<h<\frac{T}{T-T_{1}}h^*$ & $s_{(0,K)}=2$ and $s_{(K,+\infty)}=0$ \\
           \hline
           $h\geq\frac{T}{T-T_{1}}h^*$ & $s_{(0,K)}=s_{(K,+\infty)}=0$ \\
           \hline
        \end{tabular}
        \end{threeparttable}
        \caption{The number of sign changes of $g_1+g_2$.}\label{table4}
        \vspace{-0.6cm}
    \end{table}

\begin{figure}[!htbp]
    \centering
    \begin{subfigure}[b]{0.48\textwidth}
        \includegraphics[scale=0.39]{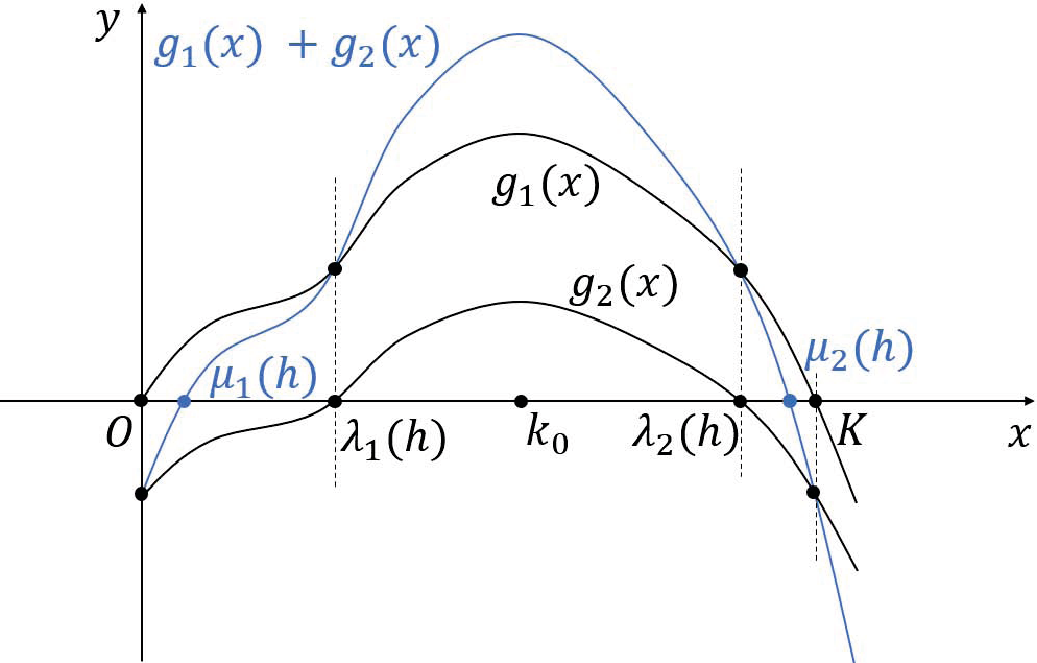}
        \caption{$0<h<h^*$}
    \end{subfigure}
    \begin{subfigure}[b]{0.48\textwidth}
            \includegraphics[scale=0.39]{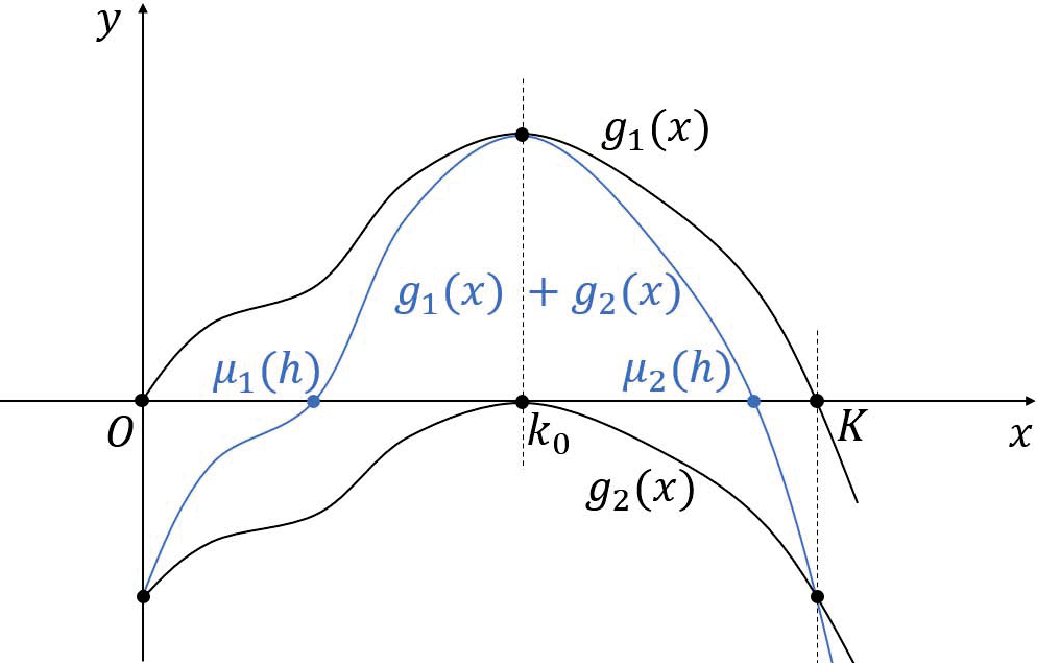}
            \caption{$h=h^*$}
    \end{subfigure}
    \begin{subfigure}[b]{0.48\textwidth}
        \includegraphics[scale=0.39]{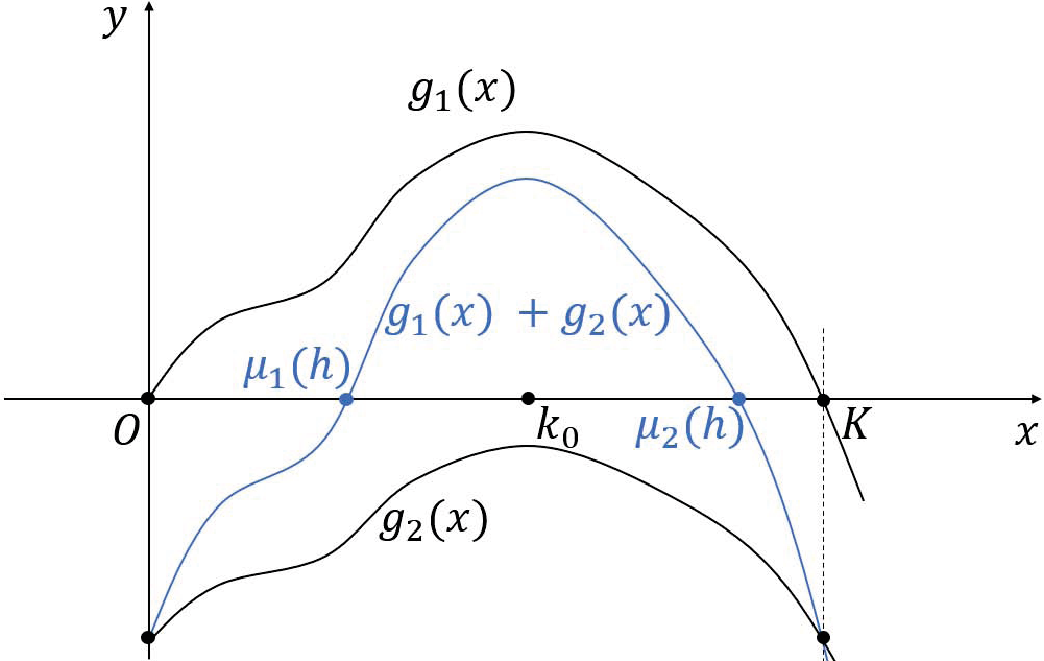}
        \caption{$h^*<h<\frac{T}{T-T_1}h^*$}
    \end{subfigure}
    \begin{subfigure}[b]{0.48\textwidth}
            \includegraphics[scale=0.39]{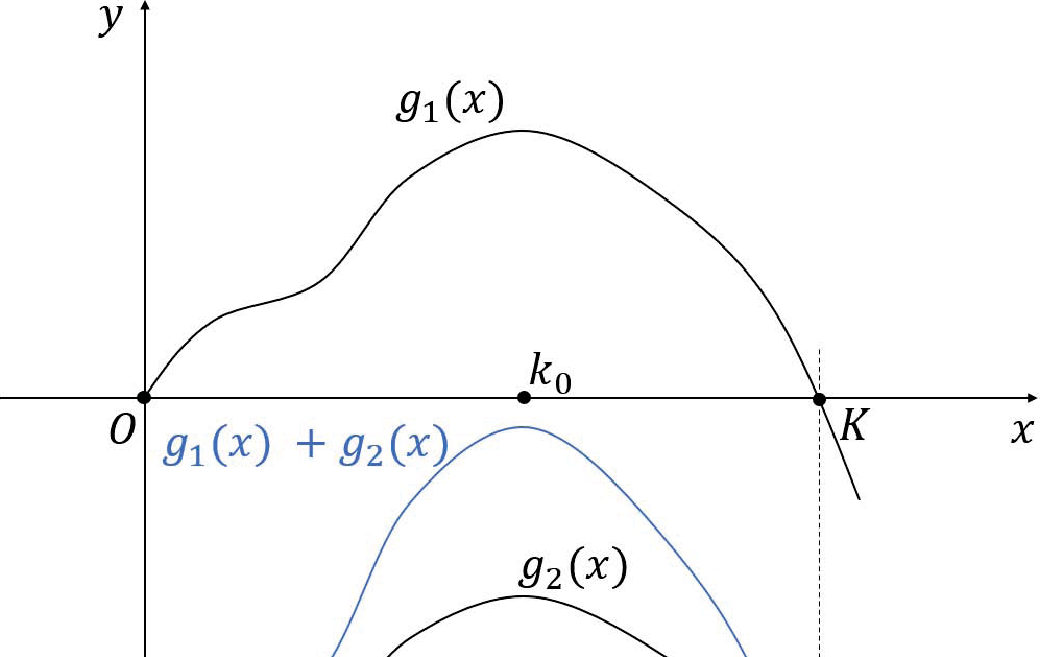}
            \caption{$\frac{T}{T-T_1}h^*\leq h$}
    \end{subfigure}
    \caption{The relative positions of the zeros of $g_1$, $g_2$, and $g_1+g_2$ for different values of $h$.}\label{fig-app1}
\end{figure}

We are now able to give the following result using Theorems \ref{coro1} and \ref{th2}.
\begin{proposition}\label{prop1-1}
    Under hypothesis $\mathrm{(H)}$, the following statements hold.
    \begin{itemize}
        \item[(i)]If $0< h\leq h^*$, then equation \eqref{xiaodongmei2} has exactly two limit cycles, where the lower one is unstable and the upper one is stable.
        \item[(ii)]If $h^*<h< \frac{T}{T-T_{1}}h^*$, then equation \eqref{xiaodongmei2} has at most two limit cycles, counted with multiplicities.
        \item[(iii)]If $h\geq \frac{T}{T-T_{1}}h^*$, then equation \eqref{xiaodongmei2} has no limit cycles.
    \end{itemize}
\end{proposition}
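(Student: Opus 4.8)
The plan is to combine the pre-analysis recorded in Table~\ref{table4} with Theorems~\ref{th2} and \ref{th1'}, an intermediate value argument for the Poincar\'e map $P$, and one short computation with the derivative formula of $P$. Throughout write $x_1:=x(\tfrac12;0,x_0)$, and note that $g_1+g_2=2Tg-2(T-T_1)h$ has only isolated zeros (by the monotonicity of $g$ in (H)), so Theorem~\ref{th2} applies to \eqref{xiaodongmei2}; also $g_1,g_2$ have no common zero, hence the equation has no constant periodic solution and, by Theorem~\ref{th2}(i.2), every periodic solution is non-constant with its connected orbit lying in a single component of $\mathcal U$. Statement (iii) then follows at once from (e): there $g_1+g_2\le 0$, so it is never sign-changing and Theorem~\ref{th2}(i.2) rules out all periodic solutions.

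For the upper bounds I would argue component by component of $\mathcal U$, as in Step~2 of the procedure. On a component carrying no sign change of $g_1+g_2$, Theorem~\ref{th2}(i.2) forbids periodic solutions; on a component carrying exactly one, Theorem~\ref{th2}(ii) gives at most one, and it is hyperbolic. By Table~\ref{table4} this already proves ``at most two, both hyperbolic'' when $0<h\le h^*$. When $h^*<h<\tfrac{T}{T-T_1}h^*$ the only component with a sign change is $(0,K)$, which carries two, so Theorem~\ref{th2} is not directly conclusive there; instead I would use the displacement function $F(x_0)=\int_{x_0}^{P(x_0)}\frac{ds}{g_2(s)}$ from the proof of Theorem~\ref{th2}(ii), whose zeros, counted with multiplicity, are exactly the periodic solutions with orbit in $(0,K)$ (since $g_2<0$ there, the antiderivative of $1/g_2$ is a local diffeomorphism). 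A short computation from the formula for $P'$ in Theorem~\ref{th1'} gives $F'(x_0)=\frac{4T_1(T-T_1)h\,(g(x_0)-g(x_1))}{g_1(x_0)g_2(x_0)g_2(x_1)}$, the denominator keeping a fixed sign on the (interval) domain of $F$; since $x_1>x_0$ and $g$ has its unique maximum at $k_0$, $F'(x_0)=0$ forces $x_0<k_0<x_1$, and there it is equivalent to $P_1(x_0)=\sigma(x_0)$, where $\sigma>k_0$ is the strictly decreasing map with $g(\sigma(x_0))=g(x_0)$. As $P_1'(x_0)=g(x_1)/g(x_0)>0$ and $\sigma'<0$, the function $P_1-\sigma$ is strictly increasing, so $F'$ has at most one, simple, zero, whence by Rolle's theorem $F$ has at most two zeros counting multiplicity. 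Together with the absence of periodic solutions in $(K,+\infty)$ this gives (ii).

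It remains to produce the two limit cycles in (i), and this is where I expect the real work. Put $r:=\lambda_1$ and $\ell:=\lambda_2$ if $h<h^*$, and $r:=\ell:=k_0$ if $h=h^*$, so the two components carrying a sign change of $g_1+g_2$ are $(0,r)$ and $(\ell,K)$, on each of which $g_1=2T_1g>0$ and $g_2<0$. On $(0,r)$: since $g_2(0)=-2(T-T_1)h<0$, orbits with very small initial value fall below $0$ during $[\tfrac12,1]$ and leave $I=\mathbb R_0^+$, so the domain of $P$ inside $(0,r)$ is an interval $(\beta,r)$ with $\beta>0$ and $P(x_0)\to 0$, i.e.\ $P(x_0)-x_0\to-\beta<0$, as $x_0\downarrow\beta$. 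Near $r$: since $g_1(r)>0$, on $[0,\tfrac12)$ the orbit is pushed above $r$, and then either $g_2>0$ just above $r=\lambda_1$ carries it still higher, or $g_2<0$ on both sides of $r=k_0$ but the zero of $g_2$ at $k_0$ is non-transversal ($g_2'(k_0)=2(T-T_1)g'(k_0)=0$) and cannot be crossed in finite time; either way $P(r)>r$, so $P(x_0)-x_0>0$ near $r$, and the intermediate value theorem gives a fixed point of $P$ in $(\beta,r)$. On $(\ell,K)$: this interval is $P$-invariant (the zero of $g_1$ at $K$ blocks the up-phase; the zero of $g_2$ at $\ell$, which is transversal if $h<h^*$ and non-transversal if $h=h^*$, blocks the down-phase), $P$ extends continuously to $[\ell,K]$, and $P(\ell)>\ell$ by the same ``pushed up then blocked'' mechanism while $P(K)<K$ since $g_1(K)=0$ and $g_2(K)<0$; so $P$ has a fixed point in $(\ell,K)$ too. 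Each fixed point obtained this way has its entire orbit inside the component in question (the up-phase cannot exceed the upper endpoint and the down-phase cannot drop below the lower one), hence is a genuine non-constant periodic solution; by the bounds above it is the unique one in its component and is hyperbolic, and the sign of $P-\mathrm{id}$ at the two ends (from $-$ to $+$ across $(0,r)$, from $+$ to $-$ across $(\ell,K)$) identifies the lower limit cycle as unstable and the upper one as stable.

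The step I expect to be the main obstacle is this last existence argument: it requires controlling the domain $\mathcal D$ of the Poincar\'e map near $x=0$, where trajectories can leave the physical interval $\mathbb R_0^+$, and in the boundary case $h=h^*$ it forces one to handle the non-hyperbolic equilibrium $x=k_0$ of the second sub-equation. Once the correct signs of $P-\mathrm{id}$ at the two ends of each inner component are in place, the rest is a routine application of the intermediate value theorem together with Theorems~\ref{th1'}--\ref{th2}.
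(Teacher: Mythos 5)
Your proposal is correct, and it reproduces the paper's skeleton (Step 1/Step 2 pre-analysis, Table~\ref{table4}, Theorem~\ref{th2} for part (iii) and for the single-sign-change components, an endpoint-sign/intermediate-value argument for existence in part (i)), but it handles the two places where real work is needed differently. For existence in (i), the paper only records the signs of $g_1,g_2$ at $0$, $k_0$ and $K$ and asserts the two limit cycles; your version, which tracks the domain $\mathcal D$ of $P$ near $x=0$ (the interval $(\beta,r)$ with $P(\beta)=0$) and uses the equilibria of $g_2$ at $\lambda_1$ (resp.\ the double zero at $k_0$ when $h=h^*$) as barriers, is a more careful rendering of the same idea; the only imprecision is the parenthetical claim that on $(0,r)$ ``the up-phase cannot exceed the upper endpoint'' --- $\lambda_1$ is not a zero of $g_1$, so the up-phase can overshoot it; what actually confines the orbit of a \emph{fixed point} to $(0,\lambda_1)$ is that $\lambda_1$ is an equilibrium of $g_2$ (or, directly, Theorem~\ref{th2}(i.2)). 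For the counting in (ii), the paper applies Theorem~\ref{coro1} to get $P'$ and $P''$ at a periodic orbit, shows via a three-case analysis around the point $x_*$ with $x(\tfrac12;0,x_*)=k_0$ that $G_0(x_0)=g(x_0)-g(x_1)$ changes sign exactly once with $G_1>0$ there, and concludes by a multiplicity/stability-alternation argument; you instead extend the paper's own Theorem~\ref{th2}(ii) machinery, applying Rolle's theorem to $F(x_0)=\int_{x_0}^{P(x_0)}ds/g_2(s)$ after showing $F'$ has a single simple zero via the strictly increasing function $P_1-\sigma$. Both hinge on exactly the same numerator $g(x_0)-g(x_1)$ and the monotonicity of $P_1$; your Rolle packaging delivers ``at most two counted with multiplicity'' a bit more cleanly, while the paper's route additionally yields the semi-stability $P''(a)<0$ of the potential double cycle (not needed for this proposition, and recovered later from the rotated-equations lemma anyway). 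In short: same strategy, with a legitimately different and slightly more self-contained execution of the key step (ii).
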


\begin{proof}
According to the argument in Step 1 and statement (i.2) of Theorem \ref{th2}, the limit cycles of equation \eqref{xiaodongmei2} can only appear in the region $[0,1]\times\mathcal U$. In what follows we shall verify the statements one by one.

    (i) It is a direct conclusion from Table \ref{table4} and Theorem \ref{th2} that equation \eqref{xiaodongmei2} has at most two limit cycles. Furthermore, one can easily check by hypothesis (H) that: $g_1(0)=0$ and $g_2(0)<0$; $g_1(k_0)>0$ and $g_2(k_0)\geq0$; $g_1(K)=0$ and $ g_2(K)<0$. These yield an unstable limit cycle in $[0,1]\times(0,k_0)$ and a stable limit cycle in $[0,1]\times(k_0,K)$ of the equation. Thus, statement (i) holds.

    (ii)
    In this case, Table \ref{table4} and statement (i.2) of Theorem \ref{th2} indicate that the limit cycles of equation \eqref{xiaodongmei2} are further restricted to the region $[0,1]\times(0,K)$.
    To prove the assertion of the statement we will utilize Theorem \ref{coro1}. Let $x(t;0,x_0)$ be the solution of the equation with initial value $x(0;0,x_0)=x_0$. For convenience we use the notations $x_1=x(\frac{1}{2};0,x_0)$ and $P(x_0)=x(1;0,x_0)$ as in Theorem \ref{coro1}. According to the theorem and a simple calculation, if $x=x(t;0,x_0)$ is periodic and located in $[0,1]\times(0,K)$, then
    \begin{equation}\label{signP^{'}}
        P'(x_{0})=1+\frac{4(T-T_{1})T_{1}h}{g_{1}(x_{0})g_{2}(x_{1})}\Big(g(x_{0})-g(x_{1})\Big),
    \end{equation}
    and
    \begin{equation}\label{signP''}
        P''(x_{0})=\frac{4(T-T_{1})T_{1}h}{g_{1}(x_{0})g_{2}(x_{1})}\left(g^{'}(x_{0})-g^{'}(x_{1})\right) \text{ when } P'(x_{0})=1.
    \end{equation}
    Note that $g_1>0$ and $g_2<0$ on $(0,K)$ when $h>h^*$. Hence, the multiplicity and stability of the periodic solution, can be determined by $G_0(x_0):=g(x_0)-g(x_1)$ and $G_1(x_0):=g'(x_0)-g'(x_1)$, provided that they do not vanish simultaneously.

    Now we focus on the functions $G_0$ and $G_1$. Since $g_1(0)=g_1(K)=0$, it follows that $x_1:x_0\mapsto x(\frac{1}{2};0,x_0)$ forms an isomorphism on $[0,K]$.  Thus, $G_0$ and $G_1$ are well-defined on $[0,K]$.
    We claim that there exists $a\in(0,K)$, such that $G_0<0$ on $(0,a)$, $G_0=0$ and $G_1>0$ at $x_0=a$, and $G_0>0$ on $(a,K)$.
    Indeed, let $x_*\in(0,K)$ be the point satisfying $x(\frac{1}{2};0,x_*)=k_0$. We have $x_*<k_0$ and the following three observations, taking into account the fact that $g_1>0$ on $(0,K)$ and the monotonicity of $g$:
    \begin{itemize}
      \item  If $x_0\in(0,x_*]$, then $x_0<x_1\leq x(\frac{1}{2};0,x_*)=k_0$. Therefore $G_0<0$ on $(0,x_*]$.
      \item  If $x_0\in(x_*,k_0)$, then $x_0<k_0=x(\frac{1}{2};0,x_*)<x_1$. Therefore $G_0$ is strictly increasing and $G_1>0$ on $(x_*,k_0)$.
      \item  If $x_0\in[k_0,K)$, then $k_0\leq x_0<x_1$. Therefore $G_0>0$ on $[k_0,K)$.
    \end{itemize}
    These immediately lead to the existence of $a$. The claim is proved.

    Let us come back to the estimate for the number of limit cycles of equation \eqref{xiaodongmei2}. From \eqref{signP^{'}} and the above claim, equation \eqref{xiaodongmei2} has at most one non-hyperbolic limit cycle, which is determined by the initial value $x_0=a$ if it exists. In such a case, we know by \eqref{signP''} and the claim that $P''(a)<0$. Hence, this non-hyperbolic limit cycle is upper-stable and lower-unstable, with the multiplicity being two. On the other hand, \eqref{signP^{'}} together with the claim also shows that any limit cycle of the equation with initial value $x_0<a$ (resp. $>a$) must be hyperbolic and unstable (resp. stable). Taking into account the fact that two consecutive limit cycles must possess opposite stability on the adjacent side, equation \eqref{xiaodongmei2} has at most two limit cycles, counted with multiplicities.

    (iii) It is also a direct conclusion applying Table \ref{table4} and statement (i.2) of Theorem \ref{th2}.
\end{proof}

As shown in Proposition \ref{prop1-1}, we have actually proved Theorem \ref{application1} except for the explicit evolution of the limit cycles when $h\in\big(h^*,\frac{T}{T-T_1}h^*\big)$.
We are going to give a small improvement for the proposition by utilizing the theory of rotated differential equations introduced in Section 2, and then finally complete the proof of the theorem.

\begin{proof}[Proof of Theorem \ref{application1}:]
    The result will follow from Proposition \ref{prop1-1} once the evolution of the limit cycles of equation \eqref{xiaodongmei2} is clarified for $h\in\big(h^*,\frac{T}{T-T_1}h^*\big)$. To finish this part, we first know by
    Definition \ref{definition of rotated} that equation \eqref{xiaodongmei2} forms a family of rotated equations on $[0,1]\times\mathbb R^+_0$ with respect to $h$.
    Then, from Lemma \ref{property of rotated}, the unstable limit cycle and the stable limit cycle of the equation given in statement (i) of Proposition \ref{prop1-1}, still exist for $h>h^*$ and are monotonically increasing and decreasing in $h$, respectively, unless there is a change in stability.
    Together with statements (ii) and (iii) of Proposition \ref{prop1-1}, these two limit cycles approach each other as $h$ increases, and then coincide to form a lower-unstable and upper-stable limit cycle at a specific value $h=h_{MSY}\in\big(h^*,\frac{T}{T-T_1}h^*\big)$. Note that the solution of this family of rotated equations, denoted by $x_h(t;0,x_0)$, is monotonically decreasing in $h$. Hence, equation \eqref{xiaodongmei2} has no limit cycles for $h>h_{MSY}$ because, once $x_h(1;0,x_0)$ is well-defined, $x_h(1;0,x_0)<x_{h_{MSY}}(1;0,x_0)\leq x_0$. Consequently, the assertion of the theorem holds for $h\in\big(h^*,\frac{T}{T-T_1}h^*\big)$. The proof is finished.
\end{proof}

\subsection{Application 2}
In this application we aim to prove Theorem \ref{main theorem}. It is sufficient to focus on equation \eqref{main equation} in one period (again taking $k=0$). Thus, due to Lemma \ref{lemma1} and the arbitrariness of the parameters $a_i$'s, $b_i$'s, $c_i$'s in the equation, the validity of the theorem can be known by considering the following equation
\begin{align}\label{main equation1}
\begin{split}
    \frac{dx}{dt}
    =
    \left\{
  \begin{aligned}
         &f_1(x)=a_{1}x^3+b_{1}x^2+c_{1}x,&  t\in \Big[0,\frac{1}{2}\Big),\ x\in\mathbb R,\\
         &f_2(x)=a_{2}x^3+b_{2}x^2+c_{2}x,&  t\in \Big[\frac{1}{2},1\Big],\ x\in\mathbb R,
    \end{aligned}
  \right.
\end{split}
\end{align}
where $a_{i}, b_{i}, c_{i}\in\mathbb R$ for $i=1,2$.

Similar to the argument in the previous Application 1, we begin by pre-analyzing equation \eqref{main equation1} in two steps. We will only consider the case $f_1+f_2\not\equiv0$ because, when $f_1+f_2\equiv0$, any solution $x(t;0,x_0)$ of equation \eqref{main equation1} well-defined on $[0,1]$ satisfies $x(t;0,x_0)=x(1-t;0,x_0)$, and this symmetry yields a periodic annulus but not limit cycles of the equation.
\vskip0.2cm

\textbf{Step 1:} Characterize the common zeros of $f_{1}$ and $f_{2}$ (i.e., the constant limit cycles), and the set $\mathcal U=\mathbb R\backslash\{x|f_1(x)f_2(x)=0\}$.

Clearly, a point $x=\lambda$ is a common zero of $f_1$ and $f_2$, if and only if it is a zero of $f_1+f_2$ on the set $\{x|f_1(x)f_2(x)=0\}$. Note that the multiplicity of any zero of $f_1+f_2$ does not exceed three.
Thus, we have the following simple observation taking Proposition \ref{proposition2.6} into account:
\begin{itemize}
 \item[(a)] $x=\lambda$ forms a constant limit cycle of equation \eqref{main equation1} with multiplicity $k$, if and only if it is a zero of $f_1+f_2$ with multiplicity $k$ on the set $\{x|f_1(x)f_2(x)=0\}$. Furthermore, $k\in\{1,2,3\}$.
\end{itemize}
Also it is easy to see that
\begin{itemize}
  \item[(b)] Equation \eqref{main equation1} always possesses a constant limit cycle $x=0$.
  \item[(c)] The set $\mathcal U$ consists of $2\leq m\leq6$ connected components $I_1,\ldots,I_m$. 
\end{itemize}

\textbf{Step 2:} Determine the sign-changes of $f_{1}+f_{2}$.

Let $N_c$ and $N_{\mathcal U}$ be the number of constant limit cycles of equation \eqref{main equation1} and the number of zeros of $f_1+f_2$ on $\mathcal U$, respectively, counted with multiplicities. According to statements (a), (b) and the degree of the polynomial $f_1+f_2$, we get that
$$1\leq N_c+N_{\mathcal U}\leq3.$$
This estimate together with statement (c) yields the following table for $s_E$, the number of sign-changes of $f_1+f_2$ on $E\in\{I_1,\ldots,I_m\}$.
\begin{table}[!ht]\vspace{-0.2cm}
        \centering
        \renewcommand\arraystretch{1.5}
        \begin{threeparttable}
        \begin{tabular}{|c|c|c|}
            \hline
             $N_c$ & $N_{\mathcal U}$  & $s_E$\\
            \hline
           $=3$ & $=0$ & $s_E=0$ for all $E\in \{I_1,\ldots,I_m\}$ \\
            \hline
           $=2$ & $\leq1$ & $s_E=1$ for at most one $E\in\{I_1,\ldots,I_m\}$, and $s_E=0$ for the others\\
            \hline
           \multicolumn{1}{|c|}{\multirow{2}{*}{\makecell[c]{=1}}} & \multicolumn{1}{c|}{\multirow{2}{*}{\makecell[c]{$\leq2$}}} & $s_E=1$ for at most two $E\in\{I_1,\ldots,I_m\}$, and $s_E=0$ for the others \\
           \cline{3-3}
            &  & $s_E=2$ for at most one $E\in\{I_1,\ldots,I_m\}$, and $s_E=0$ for the others \\
           \hline
        \end{tabular}
        \end{threeparttable}
        \caption{The number of sign changes of $f_1+f_2$.}\label{table5}
        \vspace{-0.6cm}
    \end{table}

From statement (i.2) of Theorem \ref{th2}, the non-constant limit cycles of equation \eqref{main equation1} are all located in the region $[0,1]\times\bigcup^{m}_{i=1}I_i$.
Now we estimate the number of non-constant limit cycles of the equation based on Table \ref{table5}, Theorem \ref{coro1} and Theorem \ref{th2}. Our results are summarized in two propositions. The first one is for the case $N_c\in\{2,3\}$.

\begin{proposition}\label{prop1}
    If $f_1+f_2\not\equiv0$ and $N_c\in\{2,3\}$, then equation \eqref{main equation1} has at most one non-constant limit cycle, counted with multiplicity.
\end{proposition}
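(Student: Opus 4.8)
The plan is a short case analysis on the value of $N_c$, built on the sign-change classification recorded in Table \ref{table5} and on Theorem \ref{th2} together with its hyperbolicity clause. Throughout, recall that $f_1+f_2\not\equiv0$, so $f_1+f_2$ has degree at least one and the inequality $N_c+N_{\mathcal U}\le3$ is available.

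First I would record, exactly as in the proof of statement (i.2) of Theorem \ref{th2}, that for a non-constant periodic solution $x=x(t;0,x_0)$ of equation \eqref{main equation1} the restrictions of $f_1$ and $f_2$ to the corresponding half-periods have definite opposite signs; hence, writing $x_1=x(\tfrac12;0,x_0)$, the orbit range $\{x(t;0,x_0)\,|\,t\in[0,1]\}$ equals the interval with endpoints $x_0$ and $x_1$. By Theorem \ref{th2}(i.2) this interval lies in $\mathcal U=\mathbb R\backslash\{x\,|\,f_1(x)f_2(x)=0\}$ and, being connected, inside a single component $I_j$ of $\mathcal U$, on which $f_1+f_2$ necessarily changes sign; in particular $s_{I_j}\ge1$.

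Then I would split into the two cases permitted by Table \ref{table5}. If $N_c=3$, then $N_{\mathcal U}=0$ and $s_{I_i}=0$ for every $i$, so the previous paragraph forbids any non-constant periodic solution and the claim is vacuous. If $N_c=2$, then $N_{\mathcal U}\le1$, so there is at most one component $I_j$ with $s_{I_j}=1$ and $s_{I_i}=0$ for all the others; every non-constant periodic solution must live in $[0,1]\times I_j$, and Theorem \ref{th2}(ii) with $E=I_j$ gives at most one such solution. Moreover $N_{\mathcal U}\le1$ forces the zero of $f_1+f_2$ on $I_j$ to be unique, so the hyperbolicity clause of Theorem \ref{th2}(ii) makes that solution hyperbolic, hence of multiplicity one. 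In either case equation \eqref{main equation1} has at most one non-constant limit cycle, counted with multiplicity.

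There is no substantial difficulty here; the work is essentially bookkeeping. The only points needing a little care are the reduction of the orbit range to a single connected component $I_j$, so that Theorem \ref{th2}(ii) applies verbatim, and the observation that the possible non-constant limit cycle in the case $N_c=2$ is automatically hyperbolic, so that counting with multiplicity does not enlarge the bound. If a self-contained hyperbolicity check is preferred to invoking the clause of Theorem \ref{th2}(ii), the derivative formula $P'(x_0)=1+\big(f_1(x_1)(f_1(x_0)+f_2(x_0))-f_1(x_0)(f_1(x_1)+f_2(x_1))\big)\big/\big(f_1(x_0)f_2(x_1)\big)$ from Theorem \ref{coro1}, combined with the sign information on $f_1+f_2$ at $x_0$ and $x_1$, yields $P'(x_0)\neq1$ directly.
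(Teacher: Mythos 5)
Your proof is correct and follows essentially the same route as the paper, which simply states that the conclusion ``is immediately obtained by Table \ref{table5} and Theorem \ref{th2}''; you have just made explicit the case analysis on $N_c$, the localization of a non-constant periodic orbit inside a single component of $\mathcal U$, and the use of the hyperbolicity clause of Theorem \ref{th2}(ii) to rule out multiplicity greater than one. Nothing further is needed.
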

\begin{proof}
     The conclusion is immediately obtained by Table \ref{table5} and Theorem \ref{th2}.
\end{proof}

To give the result for the second case $N_c=1$, we start with the estimate for the multiplicity of non-constant limit cycles of equation \eqref{main equation1}. Let us consider the Poincar\'{e} map $P(x_0)=x(1;0,x_0)$ of the equation given by the solution $x(t;0,x_{0})$ with initial value $x(0;0,x_0)=x_0$. When $x=x(t;0,x_0)$ is a non-constant limit cycle, it follows from Theorem \ref{coro1} and the linearity of determinant in rows that
\begin{equation}\label{P'}
        P'(x_{0})=1+\frac{x_{0}x_{1}(x_{1}-x_{0})}{f_{1}(x_{0})f_{2}(x_{1})}\Big(Ax_{0}x_{1}-B(x_{0}+x_{1})-C\Big),
    \end{equation}
    where $x_{1}=x(\frac{1}{2};0,x_{0})$ and
\begin{align}\label{eq10}
  A=
  \left|
  \begin{array}{cc}
  a_1   & a_2\\
  b_1   & b_2  \\
  \end{array}
  \right|,\ \
  B=
  \left|
  \begin{array}{cc}
  c_1   & c_2\\
  a_1   & a_2  \\
  \end{array}
  \right|,\ \
  C=
  \left|
  \begin{array}{cc}
  c_1   & c_2\\
  b_1   & b_2  \\
  \end{array}
  \right|.
\end{align}
Furthermore, observe that the non-constant assumption for the limit cycle ensures that $x_0x_1>0$ and $x_1-x_0\neq 0$. One can obtain
\begin{align}\label{eq13}
Ax_{0}x_{1}-B(x_{0}+x_{1})-C=0 \text{ if and only if } P'(x_0)=1.
\end{align}
This together with the second conclusion of Theorem \ref{coro1} additionally yields
\begin{equation}\label{P''}
        P''(x_{0})
        =\frac{x_{1}(x_{1}^{2}-x_{0}^{2})}{f_{1}(x_{0})f_{2}(x_{1})}(Ax_{1}-B)
        =\frac{x_{0}(x_{1}^{2}-x_{0}^{2})}{f_{1}(x_{0})f_{2}(x_{0})}(Ax_{0}-B)\
        \text{when}\ P'(x_0)=1.
\end{equation}
We have the following auxiliary lemma.

\begin{lemma}\label{prop5}
    Suppose  that $f_1+f_2\not\equiv0$. Then the multiplicity of any non-constant limit cycle of equation \eqref{main equation1} is at most two.
\end{lemma}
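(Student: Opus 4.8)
The plan is to argue by contradiction, combining the formulas \eqref{P'}, \eqref{P''} with the equivalence \eqref{eq13}. Let $x=x(t;0,x_0)$ be a non-constant limit cycle of equation \eqref{main equation1}; then the Poincar\'e map $P$ is $C^2$ at $x_0$ by Theorem \ref{coro1}, and the multiplicity of the limit cycle can exceed two only if $P'(x_0)=1$ and $P''(x_0)=0$. So it suffices to show that these two equalities cannot hold simultaneously. Assume, for contradiction, that they do. Recall that the non-constant hypothesis already guarantees $x_0x_1>0$ (in particular $x_1\neq0$ and $x_0+x_1\neq0$), $x_1-x_0\neq0$, and $f_i(x_j)\neq0$ for $i,j\in\{1,2\}$, where $x_1=x(\tfrac{1}{2};0,x_0)$. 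The aim is first to deduce that the determinants $A$, $B$, $C$ of \eqref{eq10} all vanish, and then that $A=B=C=0$ is incompatible with $f_1+f_2\not\equiv0$ and the existence of a non-constant limit cycle.

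First I would substitute $P'(x_0)=1$ and $P''(x_0)=0$ into \eqref{P''}. Both expressions displayed there for $P''(x_0)$ are then valid, and since the factors $x_1^2-x_0^2=(x_1-x_0)(x_1+x_0)$, $x_0$, $x_1$, $f_1(x_0)$, $f_2(x_0)$, $f_2(x_1)$ are all nonzero, equating each expression to zero gives $Ax_1-B=0$ and $Ax_0-B=0$. Subtracting and using $x_1\neq x_0$ yields $A=0$, and hence $B=Ax_0=0$. Finally, by \eqref{eq13} the relation $P'(x_0)=1$ is equivalent to $Ax_0x_1-B(x_0+x_1)-C=0$, which together with $A=B=0$ forces $C=0$. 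Thus $A=B=C=0$.

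Next I would observe that $A=B=C=0$ says exactly that the coefficient array with rows $(a_1,b_1,c_1)$ and $(a_2,b_2,c_2)$ has rank at most one (indeed $A$, $B$, $C$ are, up to sign, its three $2\times2$ minors), i.e.\ that $f_1$ and $f_2$ are proportional polynomials or one of them is identically zero. If $f_1\equiv0$, the solution is constant on $[0,\tfrac{1}{2}]$, so $x_1=x_0$, contradicting $x_1-x_0\neq0$; the case $f_2\equiv0$ is symmetric. In the remaining case $f_2=\kappa f_1$ with $\kappa\neq0$, and the separation-of-variables identities for the periodic solution read
\begin{equation*}
  \int_{x_0}^{x_1}\frac{dx}{f_1(x)}=\frac{1}{2}
  \qquad\text{and}\qquad
  \int_{x_1}^{x_0}\frac{dx}{f_2(x)}=\frac{1}{\kappa}\int_{x_1}^{x_0}\frac{dx}{f_1(x)}=-\frac{1}{2\kappa}=\frac{1}{2},
\end{equation*}
which force $\kappa=-1$, that is $f_1+f_2\equiv0$, a contradiction. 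Hence no non-constant limit cycle of \eqref{main equation1} can have multiplicity greater than two, which is the assertion of the lemma.

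The elimination yielding $A=B=C=0$ is routine once \eqref{P'}--\eqref{eq13} are in hand; the only point requiring some care is the last paragraph, namely recognizing $A=B=C=0$ as the proportionality (rank-one) condition on the coefficient vectors, and then using the period constraint $\int_{x_0}^{x_1}dx/f_1=\int_{x_1}^{x_0}dx/f_2=\tfrac{1}{2}$ to exclude any proportionality compatible with $f_1+f_2\not\equiv0$. I expect this to be the main, if modest, obstacle.
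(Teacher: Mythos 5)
Your proof is correct and follows the same main mechanism as the paper's: assume multiplicity at least three, use \eqref{P''} together with $x_0x_1>0$, $x_1\neq x_0$ to force $Ax_0-B=Ax_1-B=0$ and hence $A=B=0$, then $C=0$ from \eqref{eq13}, and finally recognize $A=B=C=0$ as linear dependence of the coefficient vectors. The only divergence is the endgame: the paper observes that under this dependence $f_1+f_2$ is a nonzero multiple of $f_1$ or $f_2$, hence nowhere zero on $\mathcal U$, contradicting the sign-change requirement of statement (i.2) of Theorem \ref{th2}; you instead plug the proportionality $f_2=\kappa f_1$ directly into the period identities $\int_{x_0}^{x_1}dx/f_1=\int_{x_1}^{x_0}dx/f_2=\tfrac12$ to force $\kappa=-1$ and thus $f_1+f_2\equiv0$. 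Your variant is slightly more self-contained (it inlines the integral computation underlying Theorem \ref{th2}(i.2) rather than citing it), while the paper's keeps the proof shorter by reusing that theorem; both are valid, and your handling of the degenerate cases $f_1\equiv0$ or $f_2\equiv0$ via $x_1=x_0$ is also fine.
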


\begin{proof}
    This is a simple assertion using \eqref{eq10}, \eqref{eq13} and \eqref{P''}.
    Suppose that $x=x(t;0,x_{0})$ is a non-constant limit cycle of equation \eqref{main equation1} with multiplicity greater than two. Then according to \eqref{eq13}, \eqref{P''} and the above observation for $x_0$ and $x_1$, we have
    \begin{align*}
      Ax_{0}x_{1}-B(x_{0}+x_{1})-C=0\ \
      \text{and}\ \
      Ax_{0}-B=Ax_{1}-B=0.
    \end{align*}
    This implies $A=B=C=0$. Hence, the parameter vectors $(a_1,b_1,c_1)$ and $(a_2,b_2,c_2)$ are linearly dependent because, due to \eqref{eq10}, their cross product is zero. In other words, there exists $(\alpha_1,\alpha_2)\in\mathbb R^2\backslash\{(0,0)\}$ such that $\alpha_1 f_1+\alpha_2 f_2\equiv0$. However, in this case either $f_1+f_2=\frac{\alpha_2-\alpha_1}{\alpha_2}f_1$ or $f_1+f_2=\frac{\alpha_1-\alpha_2}{\alpha_1}f_2$, and $\alpha_1\neq\alpha_2$ under assumption. Thus, $f_1+f_2$ is nonzero on $\mathcal U=\bigcup^{m}_{i=1}I_i$, which contradicts statement (i.2) of Theorem \ref{th2} for $x(t;0,x_0)$. Accordingly, the multiplicity of $x=x(t;0,x_0)$ must be at most two.
\end{proof}

\begin{proposition}\label{prop2}
    If $f_1+f_2\not\equiv0$ and $N_c=1$, then equation \eqref{main equation1} has at most two non-constant limit cycles, counted with multiplicities.
\end{proposition}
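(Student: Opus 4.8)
Since $N_c=1$, the only constant limit cycle is $x=0$, and Table \ref{table5} (row $N_c=1$) leaves two combinatorial possibilities: either $s_E\le 1$ for every component $E\in\{I_1,\dots,I_m\}$, or $s_E=2$ for exactly one component and $s_E=0$ for the rest. In the first case $N_{\mathcal U}\le 2$ forces $f_1+f_2$ to have, on each component with $s_E=1$, exactly one zero and that zero to be simple; statement (ii) of Theorem \ref{th2} then gives at most one periodic solution in that component, necessarily hyperbolic. Hence there are at most two non-constant limit cycles, each of multiplicity one, and it remains to handle the case where $f_1+f_2$ has two simple zeros $\mu_1<\mu_2$ lying in a single component $E$ — by statement (i.2) of Theorem \ref{th2}, all non-constant limit cycles are then contained in $[0,1]\times E$.

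For this case the plan is to work with the displacement function $F(x_0)=\int_{x_0}^{x_2}\frac{dx}{f_2(x)}$ from \eqref{18}, on the interval $J\subseteq E$ spanned by the initial values of the non-constant limit cycles (where $F$ is well defined). Since $f_2$ has a constant sign on $E$, post-composing with an antiderivative $G$ of $1/f_2$ realizes $F$ as the displacement $\widetilde P-\mathrm{id}$ of the smooth return map $\widetilde P=G\circ P\circ G^{-1}$, so the non-constant limit cycles counted with multiplicity are precisely the zeros of $F$ on $J$ counted with multiplicity; it suffices to bound the latter by two. By \eqref{17}, $F'(x_0)=0$ is equivalent to $f_1(x_1)(f_1+f_2)(x_0)=f_1(x_0)(f_1+f_2)(x_1)$; writing $f_i(x)=x\,q_i(x)$ with $q_i$ quadratic (legitimate since $0\notin E$), the two sides are interchanged by $x_0\leftrightarrow x_1$, so their difference is divisible by $x_1-x_0$, which is nonzero on $E$ because $t\mapsto x(t;0,x_0)$ is strictly monotone there. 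Cancelling $x_1-x_0$ leaves $A\,x_0x_1-B(x_0+x_1)-C=0$ with $A,B,C$ as in \eqref{eq10} — the very factor controlling $P'(x_0)-1$ in \eqref{P'} — and $(A,B,C)\ne(0,0,0)$, since otherwise $f_1+f_2$ would be a scalar multiple of $f_1$ and could not change sign, contradicting $s_E=2$. Equivalently, $F'(x_0)=0$ iff $x_1=\sigma(x_0)$ for the fractional-linear involution $\sigma$ determined by $A,B,C$. A direct computation of $\sigma'$ — or the remark that the fixed points of $\sigma$ are the critical points of $R:=(f_1+f_2)/f_1=(q_1+q_2)/q_1$, one of which is the real extremum of $R$ between its zeros $\mu_1,\mu_2$, so both critical points are real — shows $\sigma$ is strictly decreasing away from its real pole, while $x_0\mapsto x_1(x_0)$ is strictly increasing; together with $x_1(x_0)>x_0$ and a sign discussion of $\sigma(x_0)-x_0$ on the two sides of the pole, this should leave at most one solution on $J$. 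Then $F'$ has at most one zero on $J$, and Rolle's theorem counting multiplicities gives at most two zeros of $F$, as desired; this is consistent with the bound in Lemma \ref{prop5}. (One can equally run the argument through $P'-1$: limit cycles alternate in stability, non-hyperbolic ones have multiplicity exactly two by Lemma \ref{prop5}, so a total multiplicity $\ge 3$ would force $P'-1$ to vanish at least twice among periodic initial values, again impossible since an increasing graph meets a decreasing one at most once.)

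The crux — and essentially the only real work — is the last step: ruling out a \emph{second} intersection of the increasing graph of $x_0\mapsto x_1(x_0)$ with the decreasing branches of $\sigma$. Because $\sigma$ has a real pole, these graphs can in principle cross twice, once on each side of the pole, so one must exploit the constraints peculiar to periodic solutions — that $x_1(x_0)>x_0$, that $x_0$ and $x_1(x_0)$ lie in the same component $E$, and that (by statement (i.2) of Theorem \ref{th2}) the orbit segment $[x_0,x_1(x_0)]$ straddles one of $\mu_1,\mu_2$. Pinning down the mutual position of the pole and the fixed points of $\sigma$ (equivalently, the pole and critical points of $R$) relative to $\mu_1,\mu_2$, and checking that these constraints forbid the spurious crossing, is the technical heart of the proof.
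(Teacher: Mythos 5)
Your reduction of the hard case ($s_E=2$ on a single component $E$) to the claim that $F'$ --- equivalently $G(x_0):=Ax_0x_1-B(x_0+x_1)-C$ --- vanishes at most once on the relevant interval is a reasonable idea, and the bookkeeping leading up to it is correct: the identification of limit-cycle multiplicity with zero multiplicity of $F$, the factorization of the numerator of $F'$ through $x_1-x_0$, and the observation that the fixed points of $\sigma$ are the critical points of $(f_1+f_2)/f_1$ (which neatly gives $B^2+AC\geq0$) all check out. But the claim itself is exactly what you do not prove, and you say so yourself: ruling out the second intersection is deferred as ``the technical heart of the proof.'' This is a genuine gap, not a routine verification. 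When $A\neq0$ the involution $\sigma$ has a pole at $x_0=B/A$, which can lie inside $E$; the function $G_1(x_0)=Ax_1-B-\frac{B^2+AC}{Ax_0-B}$ is monotone increasing separately on $\hat E\cap E^+$ and $\hat E\cap E^-$, so $G$ can a priori vanish once on each side of the pole. Two zeros of $F'$ give, via Rolle, only the useless bound of three zeros of $F$; and three hyperbolic limit cycles are perfectly compatible with $G$ vanishing twice, so your parenthetical variant through $P'-1$ (``an increasing graph meets a decreasing one at most once'') has the same hole.

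The paper closes this gap by a different decomposition, none of which appears in your proposal. For non-hyperbolic limit cycles it uses the extra sign condition $(Ax_0-B)(Ax_1-B)>0$ (your requirement that the crossing occur where $\sigma$ is decreasing) to show each such cycle lies entirely in $E^+=E\cap\big(\frac{B}{A},+\infty\big)$ or $E^-=E\cap\big(-\infty,\frac{B}{A}\big)$, on each of which $f_1+f_2$ changes sign exactly once, so statement (ii) of Theorem \ref{th2} forces hyperbolicity there; this yields at most one multiplicity-two cycle (Claim 2) and excludes its coexistence with hyperbolic ones (Claim 3). The bound of two on the number of hyperbolic limit cycles (Claim 4) is not extracted from $F$ at all, but from embedding \eqref{main equation1} in a rotated family with respect to $c_1$, using the constant limit cycle $x=0$ and Lemma \ref{property of rotated}; the subcase $A=0$ is treated separately, where $\sigma$ degenerates to a decreasing affine map and the same rotated-family trick reduces everything to the hyperbolic case. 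As written, your argument establishes only the easy half of the proposition (the rows of Table \ref{table5} with $s_E\le1$); to complete it you would either have to carry out the positional analysis of the pole and fixed points of $\sigma$ relative to $\mu_1,\mu_2$ that you defer, or import the paper's Claims 1--4.
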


\begin{proof}
From Table \ref{table5} and Theorem \ref{th2}, the result will follow once we show that, for the case $E\in\{I_1,\ldots,I_m\}$ with $s_E=2$, equation \eqref{main equation1} has at most two limit cycles (counted with multiplicities) in the region $[0,1]\times E$. In the following, we prove this in two subcases according to the value of $A$ defined in \eqref{eq10}.
\vskip0.2cm

\noindent{\em Subcase 1}: $A=0$.

In this subcase, the formulas \eqref{P'} and \eqref{P''} for the limit cycle $x=x(t;0,x_0)$ and the Poincar\'{e} map $P$, are reduced to
    \begin{equation}\label{eq11}
        P^{'}(x_{0})=1-\frac{x_{0}x_{1}(x_{1}-x_{0})}{f_{1}(x_{0})f_{2}(x_{1})}\Big(B(x_{0}+x_{1})+C\Big),
    \end{equation}
    and
    \begin{equation}\label{eq12}
        P^{''}(x_{0})
        =\frac{x_{0}(x_{1}^2-x_{0}^2)}{f_{1}(x_{0})f_{2}(x_{0})}B\ \
        \text{when}\ \ P^{'}(x_{0})=1,
    \end{equation}
    where $B,C$ are defined in \eqref{eq10} and $x_1=x(\frac{1}{2};0,x_0)$.

    It is clear that $f_1$ and $f_2$ have definite signs on $E$, because they are nonzero on $\bigcup^{m}_{i=1}I_i$. Thus $x_1-x_0$ does not change sign for all $x_0\in\hat E:=\big\{\rho\in E\big|x(\frac{1}{2};0,\rho)\in E\big\}$. Also we have $x_0x_1>0$ for $x_0\in \hat E$. Hence, from Lemma \ref{prop5} and \eqref{eq12}, all the non-hyperbolic limit cycles of the equation in $[0,1]\times E$ have multiplicity two with the same semi-stability.

    Now suppose that equation \eqref{main equation1} has $r$ limit cycles in $[0,1]\times E$, counted with multiplicities. According to Definition \ref{definition of rotated}, the equation forms a family of rotated equations on $[0,1]\times E$ with respect to the parameter $c_1$. By the above argument and statement (ii) of Lemma \ref{property of rotated}, all the limit cycles with multiplicity two in the region split simultaneously into two hyperbolic limit cycles in an appropriate monotonic variation of $c_1$. Moreover, the equation always satisfies $A=0$ as $c_1$ varies. For these reasons, it is sufficient to consider the case where these $r$ limit cycles are all hyperbolic. Note that $x_1$ is increasing in $x_0$. Then the function $B(x_0+x_1)+C$ is monotonic on $\hat E$ and therefore can change sign at most once. Due to \eqref{eq11} and the fact that two consecutive hyperbolic limit cycles must possess opposite stability, we obtain $r\leq2$. The assertion in this subcase is proved.

\vskip0.2cm

\noindent{\em Subcase 2}: $A\neq0$.

First, from the analysis in subcase 1, there exists a constant $\alpha\in\{-1,1\}$ such that
\begin{align*}
  \text{sgn}\left(\frac{x_{0}x_{1}(x_{1}-x_{0})}{f_{1}(x_{0})f_{2}(x_{1})}\right)
  =\text{sgn}\left(\frac{x_{1}(x^2_{1}-x^2_{0})}{f_{1}(x_{0})f_{2}(x_{1})}\right)
  =\text{sgn}\left(\frac{x_{0}(x^2_{1}-x^2_{0})}{f_{1}(x_{0})f_{2}(x_{0})}\right)
  =\alpha,\ x_0\in\hat E.
\end{align*}
Let $G$ be a function on $\hat E$ given by $G(x_0)=Ax_{0}x_{1}-B(x_{0}+x_{1})-C$.
Then, for the limit cycle with initial value $x_0\in\hat E$ (i.e., the limit cycle in $[0,1]\times E$), we get by \eqref{P'} and \eqref{P''} that
\begin{align}\label{eq14}
  \text{sgn}\left(P'(x_0)-1\right)=\alpha\cdot\text{sgn}\big(G(x_0)\big),
\end{align}
and
\begin{align}\label{eq15}
  \text{sgn}\left(P''(x_0)\right)=\alpha\cdot\text{sgn}(Ax_1-B)=\alpha\cdot\text{sgn}(Ax_0-B) \ \
        \text{when}\ \ P'(x_0)=1.
\end{align}
Accordingly, when this limit cycle has multiplicity two, it satisfies
  \begin{align}\label{eq17}
      G(x_{0})=0\ \ \text{and}\ \ (Ax_{0}-B)(Ax_{1}-B)>0.
    \end{align}

     Without loss of generality, suppose that $A>0$ (otherwise take the change of variable $x\mapsto -x$ in the equation).
     We will finish the rest of the proof by four claims, which are stated one by one below.
\vskip0.1cm

    {\em Cliam 1}: Equation \eqref{main equation1} satisfies $B^{2}+AC>0$ if it has limit cycle(s) with multiplicity two in $[0,1]\times E$. Furthermore, every limit cycle of this type is specifically located in either $[0,1]\times E^+$ or $[0,1]\times E^-$, where $E^+=E\cap\big(\frac{B}{A},+\infty\big)$ and $E^-=E\cap\big(-\infty,\frac{B}{A}\big)$.
\vskip0.1cm

    In fact, let us rewrite the function $G$ as
    \begin{align}\label{eq16}
      G(x_0)=\frac{1}{A}\left((Ax_0-B)(Ax_1-B)-B^2-AC\right).
    \end{align}
    When $x_0$ is the initial value of the limit cycle with multiplicity two in $[0,1]\times E$, we immediately get from \eqref{eq17} and \eqref{eq16} that
    $B^2+AC=(Ax_{0}-B)(Ax_{1}-B)>0$. Also, this inequality indicates that either $x_0,x_1\in E^+$, or $x_0,x_1\in E^-$. Note that one of $x_0$ and $x_1$ is the maximum and the other is the minimum of $x(t;0,x_0)$.  Therefore, the limit cycle must be located in either $[0,1]\times E^+$ or $[0,1]\times E^-$. Claim 1 follows.
\vskip0.1cm

    {\em Cliam 2}: Equation \eqref{main equation1} has at most one limit cycle with multiplicity two in $[0,1]\times E$.
\vskip0.1cm

    To verify the claim, we introduce a function by \eqref{eq16}:
    \begin{align}\label{eq20}
      G_1(x_0)=\frac{AG(x_0)}{Ax_0-B}=Ax_1-B-\frac{B^2+AC}{Ax_0-B},\ \ \ x_0\in \hat E.
    \end{align}
    Suppose that there exist two limit cycles with multiplicity two of the equation in the region.
    By \eqref{eq17}, both initial values of these limit cycles are zeros of $G_1$ in $\hat E\cap(E^+\cup E^-)$.
    Recall that $x_1$ is increasing in $x_0$. Then, due to $B^2+AC>0$ from Claim 1 and $A>0$, the function $G_1$ is monotonically increasing on both $\hat E\cap E^+$ and $\hat E\cap E^-$. Accordingly, each of these intervals contains exactly one zero of $G_1$ (i.e., one initial value of the limit cycles). Taking Claim 1 into account, the two limit cycles are therefore located in $[0,1]\times E^+$ and $[0,1]\times E^-$, respectively.

    However, since $s_E=2$, we know by statement (i.2) of Theorem \ref{th2} that $f_1+f_2$ changes sign exactly once on each of $E^+$ and $E^-$. Applying statement (ii) of Theorem \ref{th2}, both limit cycles are hyperbolic, which contradicts their non-hyperbolicity. Hence, the estimate in Claim 2 is valid.
\vskip0.1cm

    {\em Cliam 3}: If equation \eqref{main equation1} has a limit cycle with multiplicity two in $[0,1]\times E$, then it has no hyperbolic limit cycles in the region.
\vskip0.1cm

    From Claim 2, we assume for a contradiction that the equation has two consecutive limit cycles in the region, denoted by $x=x(t;0,p_0)$ and $x=x(t;0,q_0)$, with the first one having multiplicity two and the second one being hyperbolic. According to the above argument, $p_0\in\hat E\cap(E^+\cup E^-)$ and $q_0\in\hat E$.
    Here we consider the case $p_0\in\hat E\cap E^+$ (i.e., $p_0>\frac{B}{A}$) for convenience, and the case $p_0\in\hat E\cap E^-$ can be treated in the same way.
    Then, from \eqref{eq14} and \eqref{eq15}, by using again the fact that two consecutive limit cycles must possess opposite stability on the adjacent side, we have
    \begin{align}\label{eq19}
    \text{sgn}\big(G(q_0)\big)=
    \left\{
  \begin{aligned}
         &-\text{sgn}(Ap_0-B)=-1,&  \text{if}\ q_0>p_0,\\
         &\text{sgn}(Ap_0-B)=1,&  \text{if}\ q_0<p_0.
    \end{aligned}
  \right.
    \end{align}

  On the other hand, recall that $G_1$ is monotonically increasing on $\hat E\cap E^+$. Since $G_1(p_0)=G(p_0)=0$ by \eqref{eq17} and \eqref{eq20}, one obtains
  \begin{align*}
   \text{sgn}\big(G(q_0)\big)
    =\text{sgn}\left(q_0-\frac{B}{A}\right)\cdot\text{sgn}\big(G_1(q_0)\big)
    =\left\{
  \begin{aligned}
         &1,&  \text{if}\ q_0>p_0,\\
         &-1,&  \text{if}\ \frac{B}{A}<q_0<p_0.
    \end{aligned}
  \right.
  \end{align*}
  This contradicts the case of $q_0>\frac{B}{A}$ in \eqref{eq19}. For the remaining case in \eqref{eq19}, i.e., $q_0\leq\frac{B}{A}$ with $G(q_0)>0$, it follows from \eqref{eq16} and Claim 1 that
    \begin{align*}
        (Aq_0-B)(Aq_1-B)=A G(q_0)+B^2+AC>0,
    \end{align*}
    where $q_1=x(\frac{1}{2};0,q_0)$.
    Therefore, $q_0,q_1\in E^-$, i.e., the limit cycle $x=x(t;0,q_0)$ is located in $[0,1]\times E^-$. Note that $x=x(t;0,p_0)$ is located in $[0,1]\times E^+$ by Claim 1 because $p_0\in\hat E\cap E^+$. A similar argument as in Claim 2 then yields the hyperbolicity of $x=x(t;0,p_0)$, which also shows a contradiction. Consequently, Claim 3 holds.
    \vskip0.1cm

    The arguments for Claims 2 and 3 can be intuitively understood by the relative position between the points $(x_0,x_1)$, which are induced by the limit cycles, and the hyperbola $Ax_{0}x_{1}-B(x_{0}+x_{1})-C=0$ on the $x_0x_1$-plane. See Figure \ref{fig-app2} for illustration.

    \begin{figure}[!htbp]
        \centering
            \includegraphics[scale=0.6]{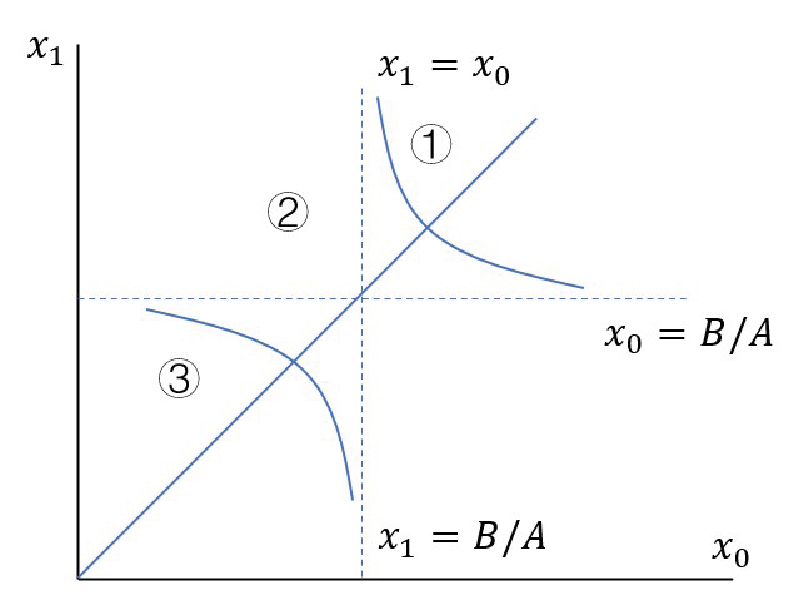}
        \caption{Regional division on stability and multiplicity of limit cycles in the subcase $A\neq0$ with $f_1|_E>0$. The hyperbola and the regions \normalsize{\textcircled{\scriptsize{1}}}\normalsize, \normalsize{\textcircled{\scriptsize{2}}}\normalsize \ and \normalsize{\textcircled{\scriptsize{3}}}\normalsize\ divided by the hyperbola and $x_1=x_0$ correspond to different stabilities of limit cycles.}\label{fig-app2}
    \end{figure}

    Now let us present the last claim.
    \vskip0.1cm

    {\em Cliam 4}: Equation \eqref{main equation1} has at most two hyperbolic limit cycles in $[0,1]\times E$.
\vskip0.1cm

    Our argument is restricted to the case $E\subseteq\mathbb R^+$ because the conclusion for the case $E\subseteq\mathbb R^-$ follows in exactly the same way.  So far, for all the cases listed in Table \ref{table5}, we have actually known by the previous argument, from Proposition \ref{prop1} to the above Claim 3, that equation \eqref{main equation1} has at most three limit cycles (counted with multiplicities) provided that one of them is non-hyperbolic. In order to verify this last claim, we assume for a contradiction that there exist four consecutive hyperbolic limit cycles of the equation, such that the first one is $x=0$ and the latter three are located in $[0,1]\times E$. Similar to the proof in subcase 1, we consider equation \eqref{main equation1} as a family of rotated equations on $[0,1]\times\mathbb R^+$ with respect to the parameter $c_1$. Then, we can denote these three limit cycles by $x=\varphi_{1}(t;c_1)$, $x=\varphi_{2}(t;c_1)$ and $x=\varphi_{3}(t;c_1)$, with $0<\varphi_{1}(t;c_1)<\varphi_{2}(t;c_1)<\varphi_{3}(t;c_1)$ and $c_1\in J$, the maximal interval where all $\varphi_{i}$ are well-defined and keep their hyperbolicity.

    According to Proposition \ref{proposition2.6}, the limit cycle $x=0$ is hyperbolic if and only if $c_1\neq-c_2$. Taking the change of variable $t\mapsto -t$ in the equation if necessary, we can suppose without loss of generality that $c_1>-c_2$ for $c_1\in J$, i.e., $J\subseteq (-c_2,+\infty)$. Thus, $x=0$ and $x=\varphi_{2}(t;c_1)$ are unstable, whereas $x=\varphi_{1}(t;c_1)$ and $x=\varphi_{3}(t;c_1)$ are stable. Note that $\frac{\partial f_1}{\partial c_1}=x>0$ and $\frac{\partial f_2}{\partial c_1}=0$ for $(t,x,c_1)\in[0,1]\times\mathbb R^+\times J$. From Lemma \ref{property of rotated}, both $x=\varphi_{1}(t;c_1)$ and $x=\varphi_{3}(t;c_1)$ decrease and the middle one $x=\varphi_{2}(t;c_1)$ increases as $c_1$ decreases in $J$. However, this together with the limit cycle $x=0$, implies the existence of at least four limit cycles (counted with multiplicities) of equation \eqref{main equation1} when $c_1$ reaches the infimum of $J$, with at least one limit cycle being non-hyperbolic. We arrive at a contradiction and thus Claim 4 is proved.
    \vskip0.1cm


    Finally, on account of Claim 2, Claim 3, Claim 4 and Lemma \ref{prop5}, equation \eqref{main equation1} has at most two limit cycles (counted with multiplicities) in $[0,1]\times E$ for subcase $A\neq0$. This ends the proof of the proposition.
\end{proof}

\begin{proof}[Proof of Theorem \ref{main theorem}]
    As the argument is reduced to that for equation \eqref{main equation1} with $f_1+f_2\not\equiv0$, the upper bound for the number of limit cycles in the assertion is immediately obtained from Proposition \ref{prop1} and Proposition \ref{prop2}. Moreover, this upper bound can be achieved when $f_1=f_2$ has exactly three zeros. In such a case equation \eqref{main equation1} has three constant limit cycles.
\end{proof}

\begin{remark}
  The analysis in application 2, constitutes a concrete example of how the zeros of the function $f_1+f_2$ simultaneously affect the distributions of the limit cycles, both the constant and the non-constant ones, of the equation of the form \eqref{equation2}.
\end{remark}

\subsection{Application 3}
In this final application, we focus on model \eqref{mosquito} under two strategies: $T>\overline T$ and $T<\overline T$. We will prove Proposition \ref{subapplication2.1} and Theorem \ref{subapplication2.2}, respectively, again following our established procedure.


\subsubsection{Strategy $T>\overline T$}
As introduced in Section 1, the function $h(t)$ in equation \eqref{mosquito} is given by \eqref{eq3}. Using Lemma \ref{lemma1}, the equation in one period (with $k=0$) can be normalized to
\begin{equation}\label{YU11}
    \frac{dw}{dt}
     =\left\{
  \begin{aligned}
         & h_{1}(w)=2\overline{T}\left(\frac{aw}{w+c}-\big(\mu+\xi (w+c)\big)\right)w,  &  t\in \Big[0,\frac{1}{2}\Big),\ w\in \mathbb{R}_{0}^{+},\\
         & h_{2}(w)=-2\xi\left(T-\overline{T}\right) (w-A)w,  &  t\in \Big[\frac{1}{2},1\Big],\ w\in \mathbb{R}_{0}^{+}.
    \end{aligned}
  \right.
\end{equation}
where $\xi$, $c>0$, $a>\mu>0$ and $A=\frac{a-\mu}{\xi}$.

Next, let us recall several important notations and thresholds from \cite{YULI,5}:
\begin{equation}\label{thresholds1}
    g^{*}:=\frac{(a-\mu)^{2}}{4\xi a},\ c^{*}:=\frac{-a+\sqrt{a^{2}+4a(a-\mu)}}{2\xi}\text{ and }\ T^{*}:=\frac{a+\xi c}{a-\mu}\overline{T},
\end{equation}
where it has been shown in the articles that $g^*<c^*$.
We first pre-analyze equation \eqref{YU11} in two steps.
\vskip0.2cm

\textbf{Step 1:} Determine the common zeros of $h_{1}$ and $h_{2}$, and obtain $\mathcal U=\mathbb R_0^+\backslash\{w|h_1(w)h_2(w)=0\}$.

Clearly, $h_1$ can be rewritten as $h_1(w)=\frac{2\overline T w}{w+c}\cdot \overline h_1(w)$, where
$$\overline h_1(w)=-\xi w^2+(a-\mu-2\xi c)w-(\xi c+u)c.$$
Then, a straightforward analysis of $\overline h_1$ and $h_2$ shows the following facts (see Figure \ref{Fig-app3.1} for illustration):
\begin{itemize}
    \item If $0<c< g^{*}$, then $h_{1}$ has exactly three zeros, one at $w=0$ and the other two, denoted by $w=\lambda_{1}(c)$ and $w=\lambda_{2}(c)$, satisfying $0<\lambda_{1}(c)< \lambda_{2}(c)<A$. Furthermore, $h_{1}$ is negative on $(0,\lambda_{1})\cup(\lambda_{2},+\infty)$ and positive on $(\lambda_{1},\lambda_{2})$, respectively.
    \item If $c=g^{*}$, then $h_{1}$ has exactly two zeros, one at $w=0$ and the other, denoted by $w=\lambda_*$, satisfying $0<\lambda_*<A$. Furthermore, $h_{1}\leq 0$ on $(0,+\infty)$.
    \item If $c>g^{*}$, then $h_{1}$ has only one zero $w=0$ and is negative on $(0,+\infty)$.
    \item $h_{2}$ has exactly two zeros $w=0$ and $w=A$.
        Furthermore, $h_{2}$ is positive on $(0,A)$ and negative on $(A,+\infty)$, respectively.
\end{itemize}
Accordingly, there are no common zeros of $h_{1}$ and $h_{2}$ except $w=0$, i.e. equation \eqref{YU11} has only one constant limit cycle $w=0$.
And we have that
\begin{itemize}
    \item[(a)] If $0<c< g^{*}$, then $\mathcal U=(0,\lambda_{1})\cup(\lambda_{1},\lambda_{2})\cup(\lambda_{2},A)\cup(A,+\infty)$.
    \item[(b)] If $c=g^{*}$, then $\mathcal U=(0,\lambda_*)\cup(\lambda_*,A)\cup(A,+\infty)$.
    \item[(c)] If $c>g^{*}$, then $\mathcal U=(0,A)\cup(A,+\infty)$.
  \end{itemize}

\textbf{Step 2:} Determine the sign changes of $h_{1}+h_{2}$.

From \eqref{YU11} and \eqref{thresholds1}, we write $h_1+h_2$ as
    \begin{align}\label{eq21}
    h_1(w)+h_2(w)=
    2\left(\frac{a\overline{T}}{w+c}-\xi T\right)w^2+2(a-\mu)\left(T-T^*\right)w.
    \end{align}
For the sake of brevity and compactness, we directly present here the following facts of this function (see again Figure \ref{Fig-app3.1} for illustration). The analysis of obtaining these facts, while not technically difficult, is a little tedious and is arranged in Appendix \ref{statements of application3.1}.
    \begin{enumerate}
        \item[(d)]If $\overline{T}<T<T^{*}$ and $0<c\leq g^{*}$, then $h_{1}+h_{2}$ has exactly two positive zeros, denoted by $w=\mu_{1}$ and $w=\mu_{2}$, satisfying
        \begin{enumerate}
            \item[(d.1)]$\mu_{1}\in (0,\lambda_{1})$ and $\mu_{2}\in (\lambda_{2},A)$, if additionally $0<c< g^{*}$.
            \item[(d.2)]$\mu_{1}\in (0,\lambda_*)$ and $\mu_{2}\in (\lambda_*,A)$, if additionally $c= g^{*}$.
        \end{enumerate}
        Furthermore, $h_{1}+h_{2}$ is positive on $(\mu_{1},\mu_{2})$ and negative on $(0,\mu_{1})\cup(\mu_{2},+\infty)$, respectively.
        \item[(e)] If $\overline{T}<T<T^{*}$ and $g^{*}<c<c^{*}$, then $h_{1}+h_{2}$ has at most two positive zeros on $(0,A)$ and no zeros on $(A,+\infty)$.
        \item[(f)] If $\overline{T}<T\leq T^{*}$ and $c\geq c^{*}$, then $h_{1}+h_{2}\leq 0$.
        \item[(g)] If either $T>T^{*}$, or $T= T^{*}$ and $0<c<c^{*}$, then $h_{1}+h_{2}$ has only one positive zero, denoted by $w=\mu_*(c)$, satisfying $\mu_*\in (0,A)$.
        Furthermore, $h_{1}+h_{2}$ is positive on $(0,\mu_*)$ and negative on $(\mu_*,+\infty)$.
    \end{enumerate}
Hence, on account of statements (a)--(f), the number of sign changes $s_{(\cdot,\cdot)}$ of $h_{1}+h_{2}$ on the intervals related to $\mathcal{U}$, can be summarized in Table \ref{the table of application3.1} below.
\begin{table}[!ht]
    \center
    \begin{threeparttable}
    \begin{tabular}{|c|c|c|c|}
        \hline
         \diagbox[innerwidth=2cm]{$T$}{$c$} &$0<c\leq g^{*}$&$g^{*}<c<c^{*}$&$c\geq c^{*}$\\
        \hline
       $\overline{T}<T<T^{*}$&\makecell[c]{\rm{$s_{(0,\lambda_{1})}=s_{(\lambda_{2},A)}=1$ and}\\ \rm{$s_{(\lambda_{1},\lambda_{2})}$=$s_{(A,+\infty)}=0$}}&\makecell[c]{\rm{$s_{(0,A)}\leq 2$ }\\ \rm{and $s_{(A,+\infty)}=0$}}&\multicolumn{1}{c|}{\multirow{2}{*}{\rm{$s_{(0,A)}=s_{(A,+\infty)}=0$}}} \\
       \cline{1-3}
       $T=T^{*}$& \multicolumn{2}{c|}{\multirow{2}{*}{\makecell[c]{\rm{$s_{(0,A)}=1$ and $s_{(A,+\infty)}=0$}}}}&\\
       \cline{1-1} \cline{4-4}
       $T>T^{*}$& \multicolumn{3}{c|}{} \\
       \hline
    \end{tabular}
    \end{threeparttable}
    \caption{The number of sign changes of $h_{1}+h_{2}$. In the case where $\overline{T}<T<T^{*}$ and $c=g^{*}$, we use the convention $\lambda_1=\lambda_2=\lambda_*$. }\label{the table of application3.1}
    \vspace{-0.6cm}
\end{table}

\begin{figure}[!htbp]
    \centering
    \begin{subfigure}[b]{0.49\textwidth}
        \includegraphics[scale=0.32]{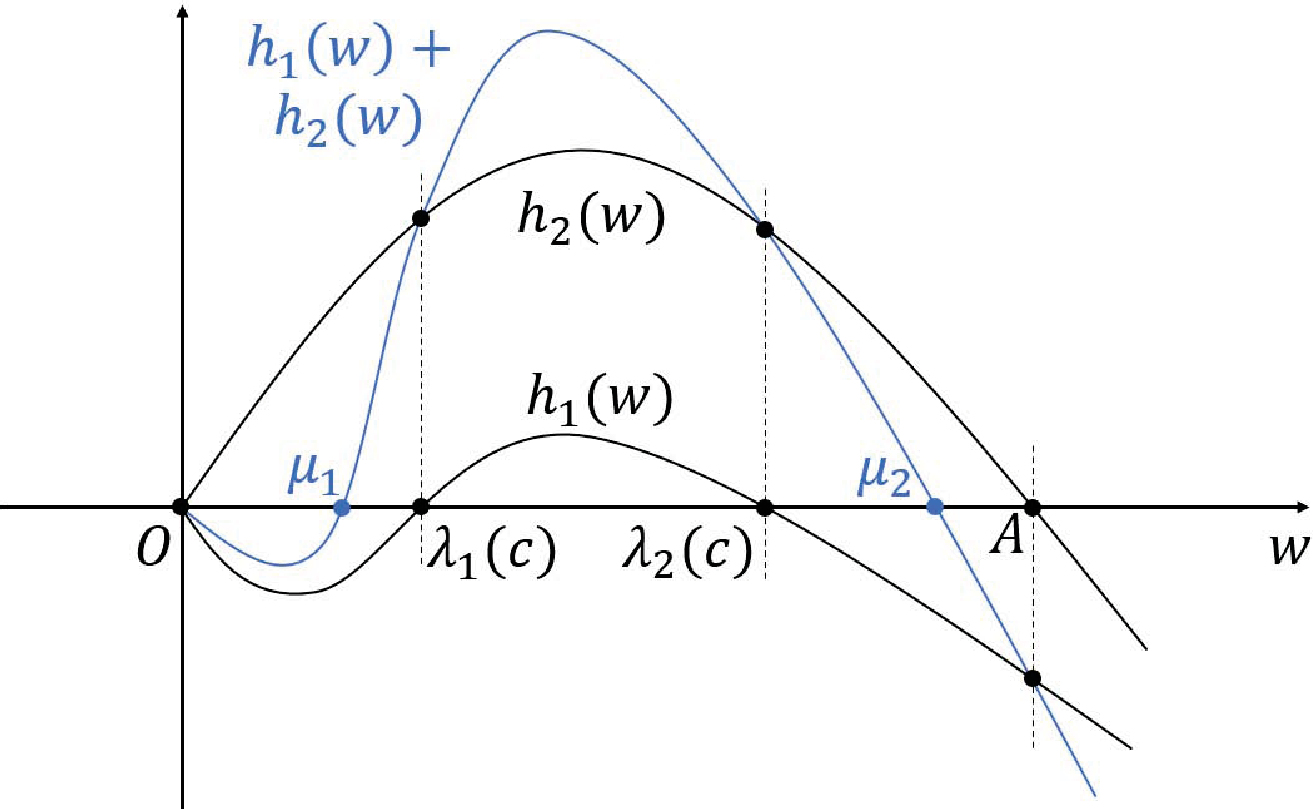}
        \caption{$\overline T<T<T^*$ and $0<c\leq g^*$}
    \end{subfigure}
    \begin{subfigure}[b]{0.49\textwidth}
        \includegraphics[scale=0.33]{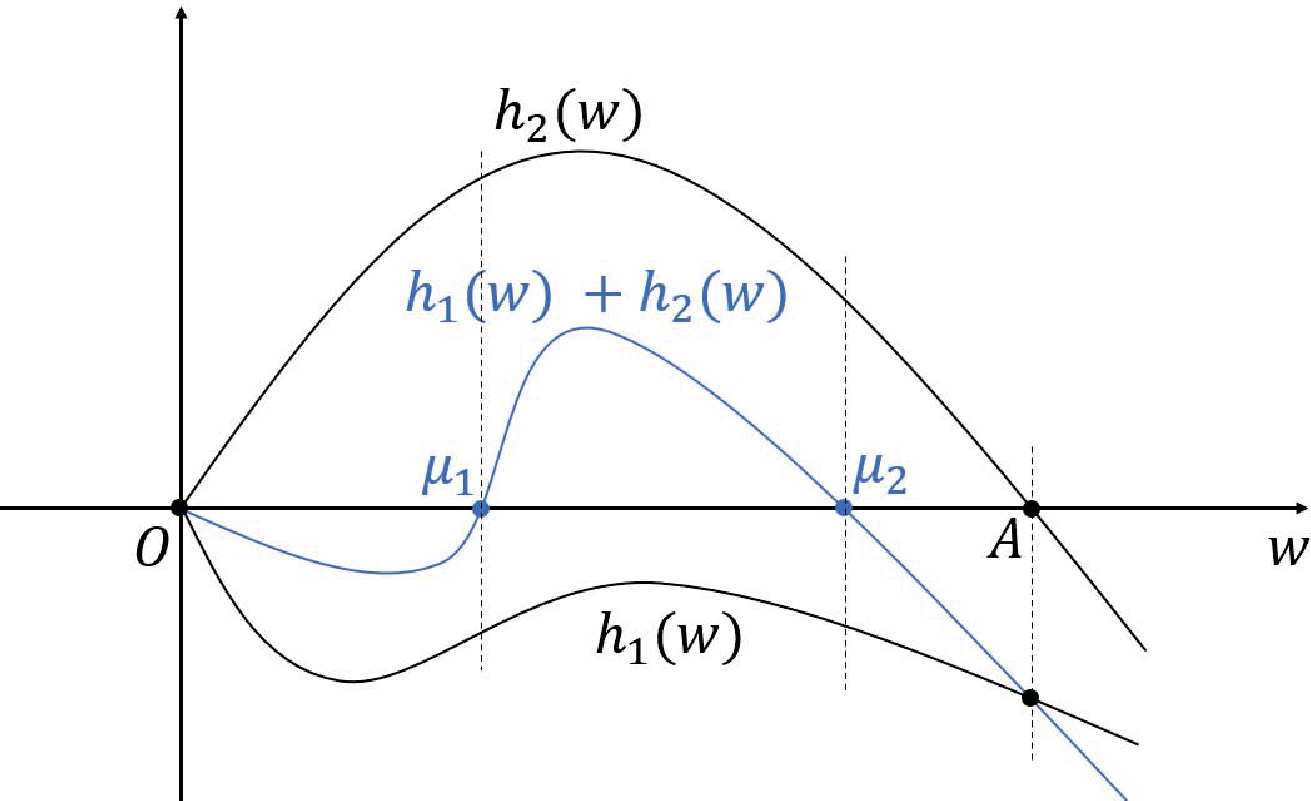}
        \caption{$\overline T<T<T^*$ and $g^*<c<c^*$}
    \end{subfigure}
    \begin{subfigure}[b]{0.49\textwidth}
        \includegraphics[scale=0.32]{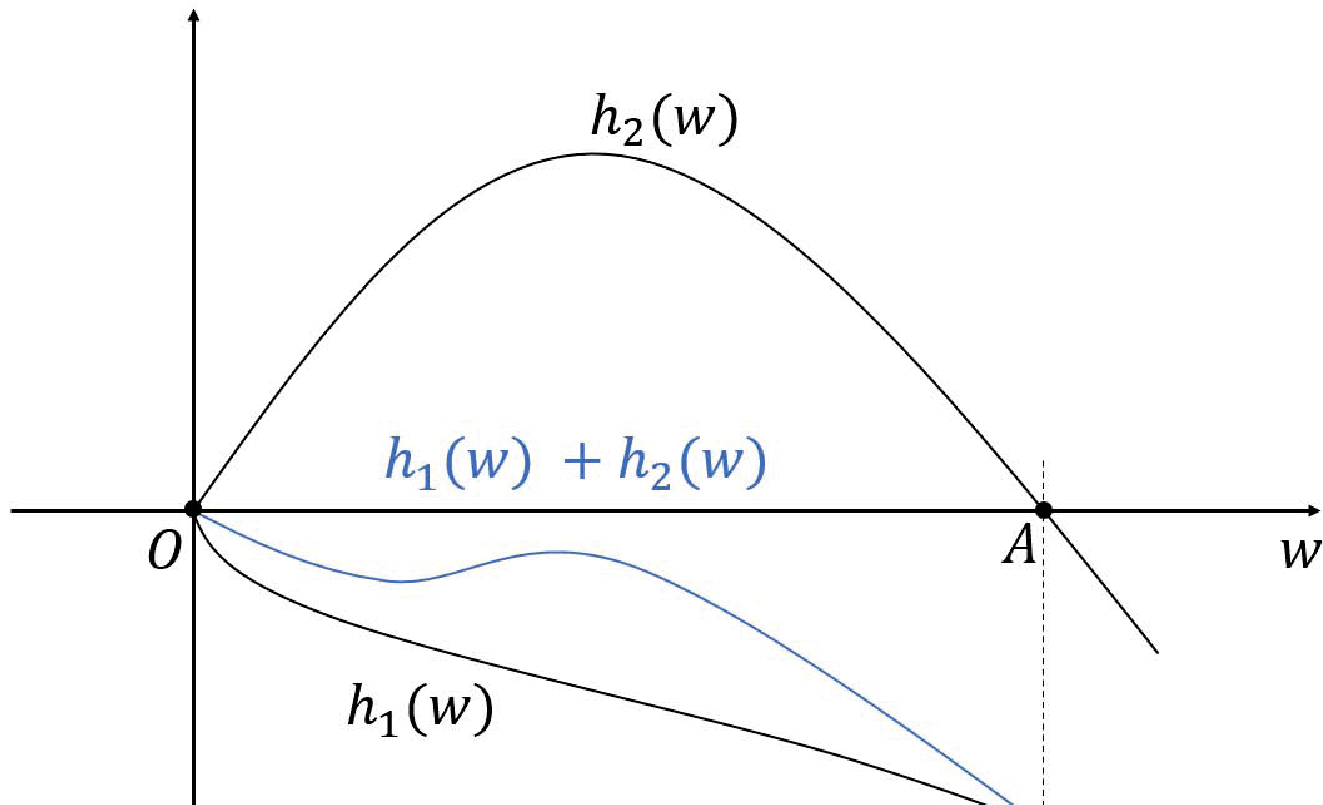}
        \caption{$\overline T<T\leq T^*$ and $c\geq c^*$}
    \end{subfigure}
    \begin{subfigure}[b]{0.49\textwidth}
        \includegraphics[scale=0.61]{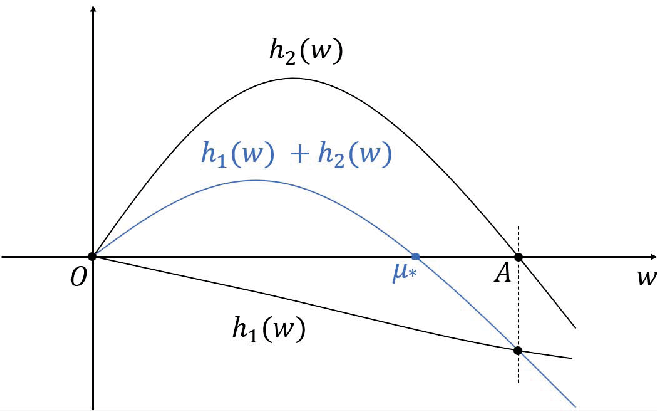}
        \caption{$\overline T>T^*$, or $T=T^*$ and $0<c<c^*$}
    \end{subfigure}
    \caption{The relative positions of the zeros of $h_1$, $h_2$, and $h_1+h_2$ for different values of $T$ and $c$.}\label{Fig-app3.1}
\end{figure}

We shall now give the following result according to Table \ref{the table of application3.1}, Theorem \ref{th2}, Propositions \ref{propostion2.5} and \ref{proposition2.6}.

\begin{proposition}\label{proposition 3.8}
    The following statements hold.
    \begin{itemize}
        \item[(i)] If $\overline{T}<T<T^{*}$ and $0<c\leq g^{*}$, then equation \eqref{YU11} has exactly two non-constant limit cycles, where the lower one is unstable and the upper one is stable. Moreover, the constant limit cycle $w=0$ is stable.
        \item[(ii)] If $\overline{T}<T<T^{*}$ and $g^{*}<c<c^{*}$, then equation \eqref{YU11} has at most two non-constant limit cycles. Moreover, the constant limit cycle $w=0$ is stable.
        \item[(iii)] If $\overline{T}<T\leq T^{*}$ and $c\geq c^{*}$, then equation \eqref{YU11} has no non-constant limit cycles. Moreover, the constant limit cycle $w=0$ is upper-stable.
        \item[(iv)] If either $T>T^{*}$, or $T= T^{*}$ and $0<c<c^{*}$, then equation \eqref{YU11} has only one non-constant limit cycle, which is stable. Moreover, the constant limit cycle $w=0$ is upper-unstable.
    \end{itemize}
\end{proposition}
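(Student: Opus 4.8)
The plan is to run the same two-step procedure already used for Proposition \ref{prop1-1}, now feeding Table \ref{the table of application3.1} into Theorem \ref{th2}. First I would combine statement (i.2) of Theorem \ref{th2} with Step 1 to confine every non-constant limit cycle of equation \eqref{YU11} to the region $[0,1]\times\mathcal U$, and then read the count $s_{(\cdot,\cdot)}$ from Table \ref{the table of application3.1} on each connected component of $\mathcal U$: where $s_{(\cdot,\cdot)}=0$ there is no non-constant limit cycle, and where $s_{(\cdot,\cdot)}=1$ statement (ii) of Theorem \ref{th2} gives at most one, necessarily hyperbolic (its single sign change of $h_1+h_2$ coming from one simple zero). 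This already yields the upper bounds in (ii), (iii) and the ``at most'' parts of (i) and (iv); in the remaining case of (ii), where $h_1+h_2$ has at most two zeros on the component $(0,A)$, I would argue as in the proof of Proposition \ref{prop1-1}(ii): one reduces the sign of $P'(\cdot)-1$ along periodic solutions to the behaviour of an explicit function on $(0,A)$ (here essentially $h_1/h_2$), shows this function changes its relevant sign at most once, concludes that there is at most one non-hyperbolic limit cycle, and then, since consecutive limit cycles alternate stability on their adjacent sides, obtains at most two in all (Proposition \ref{propostion2.5} may be invoked on the sub-region where $h_1''$ and $h_2''$ are both negative as an alternative for this bound).

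Next I would settle the constant solution $w=0$ — always a constant limit cycle by Step 1 — using Proposition \ref{proposition2.6}. A direct computation gives $h_1'(0)=-2\overline T(\mu+\xi c)$ and $h_2'(0)=2(a-\mu)(T-\overline T)$, so $\tfrac12\big(h_1'(0)+h_2'(0)\big)=(a-\mu)(T-\overline T)-\overline T(\mu+\xi c)$ is negative, zero, or positive according to $T<T^{*}$, $T=T^{*}$, or $T>T^{*}$, by the definition of $T^{*}$ in \eqref{thresholds1}; hence $P'(0)$ is $<1$, $=1$, or $>1$ respectively, which gives the type of $w=0$ away from $T=T^{*}$. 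In the degenerate case $T=T^{*}$ I would apply the second part of Proposition \ref{proposition2.6} with $k=2$, so that the sign of $P''(0)$ equals that of $h_1''(0)+h_2''(0)=4\overline T\big(\tfrac{a}{c}-\tfrac{\xi(a+\xi c)}{a-\mu}\big)$; this is positive exactly when $\xi^2c^2+a\xi c-a(a-\mu)<0$, i.e.\ exactly when $c<c^{*}$, which is precisely why the threshold $c^{*}$ appears and why $w=0$ is upper-unstable in (iv) but upper-stable in (iii) at $T=T^{*}$ (the boundary value $c=c^{*}$ needing one further derivative via Lemma \ref{han}).

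It remains to prove existence of the non-constant limit cycles asserted in (i) and (iv). For this I would work with the Poincar\'{e} map $P(w_0)=w(1;0,w_0)$ on a closed interval $[\alpha,\beta]$ whose endpoints are consecutive zeros of $h_1h_2$ — namely $[0,\lambda_1]$ and $[\lambda_2,A]$ in (i), and $[0,A]$ in (iv) — and compare $P(w_0)$ with $w_0$ at the two ends: near the end at $w=0$ the sign of $P(w_0)-w_0$ is that of $P'(0)-1$, already known; near an endpoint that is a zero of $h_1$ (resp.\ of $h_2$) the continuity of the half-period flow forces $w(\tfrac12;0,w_0)$ (resp.\ $w(1;\tfrac12,\cdot)$) towards the corresponding equilibrium, and the known sign of $h_2$ (resp.\ $h_1$) there determines the opposite sign of $P(w_0)-w_0$. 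This produces a sign change of $P-\mathrm{id}$, hence a fixed point; its stability type follows from the direction of the crossing, and its uniqueness and hyperbolicity from the $s_{(\cdot,\cdot)}=1$ analysis. The precise stability statements in (i)--(iv) then follow by combining these facts with the alternation of stability of consecutive limit cycles.

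The main obstacle will be making the existence arguments rigorous: one must verify that the fixed point found actually gives a periodic solution whose entire orbit stays inside the prescribed region $[0,1]\times(\alpha,\beta)$, so that Theorem \ref{th2}(ii) applies — this requires controlling how far the solutions of the two subequations drift over a half period near each endpoint. Secondary technical points are the ``at most one sign change'' claim for the auxiliary function in case (ii), and the sign of the first non-vanishing derivative of $P$ at $w_0=0$ in the boundary sub-case $c=c^{*}$ of (iii)--(iv); both are elementary but somewhat computational.
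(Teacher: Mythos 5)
Your treatment of the constant solution $w=0$, of parts (i), (iii) and (iv), and of the existence argument via endpoint signs of $P-\mathrm{id}$ matches the paper's proof in substance: the paper likewise gets $P'(0)=\exp\big((a-\mu)(T-T^*)\big)$ and the sign of $P''(0)$ at $T=T^*$ from Proposition \ref{proposition2.6} and \eqref{eq21} (with $P'''(0)<0$ settling $c=c^*$), confines non-constant limit cycles by Table \ref{the table of application3.1} and Theorem \ref{th2}, and produces the cycles in (i) and (iv) from the signs $h_2(\lambda_1)>0$, $h_2(\lambda_2)>0$, $h_1(A)<0$ together with the known type of $w=0$. Your worry about the orbit staying inside the prescribed strip is already disposed of by Theorem \ref{th2}(i.2), which forces the whole range of any non-constant periodic solution into a single connected component of $\mathcal U$.

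The genuine gap is in part (ii), the only case where $s_{(0,A)}$ can equal $2$ and Theorem \ref{th2}(ii) does not apply. Neither of your two suggestions closes it. Transplanting the Proposition \ref{prop1-1}(ii) argument relies on the special structure of Application~1, where $g_1$ and $g_2$ differ by a constant multiple of $g$ minus a constant, so that $P'(x_0)-1$ reduces to $g(x_0)-g(x_1)$ and its single sign change follows from the unimodality of $g$. Here $h_1$ and $h_2$ are genuinely different rational/quadratic functions, the relevant determinant $h_1(x_0)h_2(x_1)-h_1(x_1)h_2(x_0)$ does not collapse to a difference of one function at two points, and the ``at most one sign change'' property of your auxiliary function (essentially the monotonicity of $h_1/h_2$ along the flow) is exactly what would have to be proved; you give no argument for it. Your fallback --- Proposition \ref{propostion2.5} with $k=2$ on ``the sub-region where $h_1''$ and $h_2''$ are both negative'' --- fails as stated: $h_1''(0)=4\overline T\big(\tfrac{a}{c}-\xi\big)$ is positive for small $c$, so the sub-region is proper, and Proposition \ref{propostion2.5} bounds periodic solutions only on an interval where the sign condition holds throughout; it says nothing about limit cycles whose orbits leave that sub-interval. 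The paper's actual argument is both simpler and decisive: $h_1'''(w)=\tfrac{12ac\overline T}{(w+c)^4}>0$ and $h_2'''\equiv0$, so Proposition \ref{propostion2.5} with $k=3$ applies on all of $\mathbb R_0^+$ and gives at most three periodic solutions counting the constant one $w=0$, hence at most two non-constant limit cycles. You should replace your case-(ii) argument by this third-derivative count.
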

\begin{proof}
    We begin by showing the stability of $w=0$.
    Let $P(w_{0})=w(1;0,w_{0})$ be the Poincar\'{e} map of equation \eqref{YU11}, where $w(t;0,w_{0})$ is the solution of the equation satisfying $w(0;0,w_{0})=w_{0}$.
    From Proposition \ref{proposition2.6} and \eqref{eq21}, one has
    $
        P'(0)=\exp\big((a-\mu)(T-T^{*})\big).
    $
    Thus, $w=0$ is unstable (resp. stable) if $T>T^{*}$ (resp. $T<T^{*}$).
    If $T=T^{*}$, then $P'(0)=1$, and it also follows from Proposition \ref{proposition2.6} and \eqref{eq21} that
    \begin{align*}
        P''(0)
        &=4\left(\frac{a\overline{T}}{c}-\xi T^{*}\right)\int_{0}^{\frac{1}{2}}\exp \big(h'_1(0)t\big)dt\\
        &=-\frac{4\overline T}{(a-\mu)c}\int_{0}^{\frac{1}{2}}\exp \big(h'_1(0)t\big)dt\cdot\Big((\xi c)^{2}+a(\xi c)-a(a-\mu)\Big).
    \end{align*}
    Note that the threshold $c^*$ satisfies $(\xi c^*)^{2}+a(\xi c^*)-a(a-\mu)=0$. Therefore, $w=0$ is upper-unstable (resp. upper-stable) if additionally $0<c<c^{*}$ (resp. $c>c^{*}$). Since Proposition \ref{proposition2.6} and \eqref{eq21} further gives $P'''(0)<0$ when $P'(0)-1=P''(0)=0$, we obtain that $w=0$ is stable for the last case $T=T^*$ and $c=c^*$.
    Consequently, the assertions in the statements for $w=0$ all follow.

    Next we prove the remaining parts of the statements one by one.

    (i) By virtue of Table \ref{the table of application3.1} and Theorem \ref{th2}, it is clear that equation \eqref{YU11} has at most two non-constant limit cycles.
    Furthermore, the argument in Step 1 shows that: $h_{1}(\lambda_{1})=0$ and $h_{2}(\lambda_{1})>0$; $h_{1}(\lambda_{2})=0$ and $h_{2}(\lambda_{2})>0$; $h_{1}(A)<0$ and $h_{2}(A)=0$. These together with the stability of $w=0$ when $T<T^{*}$, yield an unstable limit cycle in $[0,1]\times(0,\lambda_{1})$ and a stable limit cycle in $[0,1]\times(\lambda_{2},A)$ of the equation.
    Hence, statement (i) holds.

    (ii) One can easily check that
    $h_{1}'''(w)=\frac{12 ac\overline{T}}{(w+c)^{4}}>0$
    and
    $h_{2}'''(w)\equiv0$.
    According to Proposition \ref{propostion2.5}, equation \eqref{YU11} has at most three limit cycles, including $w=0$.
    Thus, the number of non-constant limit cycles does not exceed two. Statement (ii) is valid.

    (iii) The assertion follows immediately from Table \ref{the table of application3.1} and statement (i) of Theorem \ref{th2}.

    (iv) Again due to Table \ref{the table of application3.1} and statement (ii) of Theorem \ref{th2}, equation \eqref{YU11} has at most one non-constant limit cycle.
    Since $h_{1}(A)<0$ and $h_{2}(A)=0$, and $w=0$ is upper-unstable in this case, there exactly exists a stable limit cycle in $[0,1]\times (0,A)$.
    This completes the proof of statement (iv).
\end{proof}

\begin{proof}[Proof of Proposition \ref{subapplication2.1}]
    Recall that equation \eqref{mosquito} is reduced to equation \eqref{YU11} under assumption. The conclusion can be immediately obtained by Proposition \ref{proposition 3.8}.
\end{proof}

\begin{remark}\label{rem}
We have the next two comments for equation \eqref{YU11} and Proposition \ref{subapplication2.1}.

\begin{enumerate}
[\text{(i)}] \item It is worth noting that, \eqref{YU11} is essentially an equation of the Abel type that is considered in Application 2. In fact, under the Cherkas' transformation $x=\frac{\omega}{\omega+c}$ introduced in Section 1, it becomes the following equation of the form \eqref{main equation}:
\begin{align*}
  \frac{dx}{dt}
    =\left\{
  \begin{aligned}
       & -2\overline T\left(ax^2-(a+\mu)x+\mu+\xi c\right)x,&  t\in \Big[0,\frac{1}{2} \Big),\ x\in [0,1),\\
       & -2\xi\left(T-\overline T\right)\big((A+c)x-A\big)x,&  t\in \Big[\frac{1}{2},1\Big],\ x\in [0,1).
    \end{aligned}
  \right.
\end{align*}
\end{enumerate}
\begin{enumerate}
[\text{(ii)}]
\item There is a small improvement to Proposition \ref{subapplication2.1}, utilizing the theory of rotated equations. Since $\frac{\partial h_{1}}{\partial c}=-2\overline{T}w\Big(\frac{aw}{(w+c)^{2}}+\xi\Big)<0$
and $\frac{\partial h_{2}}{\partial c}=0$ for $(t,w,c)\in[0,1]\times\mathbb R^+\times\mathbb R^+$,
by Definition \ref{definition of rotated}, equation \eqref{YU11} forms a family of rotated equations with respect to $c$.
Applying Lemma \ref{han} and Proposition \ref{proposition 3.8},
for each given $T\in(\overline{T},T^{*})$, there exists $c^{\diamond}\in (g^{*},c^{*})$ such that
\begin{itemize}
    \item Equation \eqref{YU11} has exactly two non-constant limit cycles if $0<c<c^{\diamond}$, where the lower one is unstable and the upper one is stable.
    \item Equation \eqref{YU11} has only one non-constant limit cycle if $c=c^{\diamond}$, which is upper-stable and lower-unstable.
    \item Equation \eqref{YU11} has no non-constant limit cycles if $c>c^{\diamond}$.
\end{itemize}
\end{enumerate}
\end{remark}

\subsubsection{Strategy $T<\overline T$}
Under this strategy, the function $h(t)$ in equation \eqref{mosquito} is given by \eqref{eq4}. Then, as previously applied, by Lemma \ref{lemma1} we can normalize the equation in one period (with $k=0$) to
\begin{equation}\label{25}
    \frac{dw}{dt}
    =\left\{
  \begin{aligned}
       &h_{1}(w)= 2q\left(\frac{aw}{w+(p+1)c}-\big(\mu+\xi (w+(p+1)c)\big)\right)w,&  t\in \Big[0,\frac{1}{2} \Big), w\in \mathbb{R}_{0}^{+},\\
       &h_{2}(w)= 2(T-q)\left(\frac{aw}{w+pc}-\big(\mu+\xi (w+pc)\big)\right)w,&  t\in \Big[\frac{1}{2},1\Big], w\in \mathbb{R}_{0}^{+},
    \end{aligned}
  \right.
\end{equation}
where $\xi, c >0$, $a>\mu>0$, $T>q>0$ and $p\in \mathbb{N}^{+}$.

To continue the argument, we recall two thresholds introduced in \cite{BOYU}:
\begin{equation}\label{thresholds2}
    g_{1}^{*}=\frac{(a-\mu)^{2}}{4a(p+1)\xi},\indent\  g_{2}^{*}=\frac{(a-\mu)^{2}}{4ap\xi}.
\end{equation}
In the following we pre-analyze equation \eqref{25} in two steps, in terms of these thresholds and a new one.

\textbf{Step 1:} Determine the common zeros of $h_{1}$ and $h_{2}$, and obtain $\mathcal{U}=\mathbb{R}_{0}^{+}\backslash\{w|h_1(w)h_2(w)=0\}$.

For $i=1,2$, let
\begin{align*}
\hat h_i(w)=-\xi\big(w+(p+2-i)c\big)^2-\mu\big(w+(p+2-i)c\big)+aw.
\end{align*}
Then, one has $h_1(w)=\frac{2qw}{w+(p+1)c}\cdot\hat h_1(w)$ and $h_2(w)=\frac{2(T-q)w}{w+pc}\cdot\hat h_2(w)$.
A straightforward analysis for $\hat h_1$ and $\hat h_2$ easily gives the following facts (see Figure \ref{Figures of application3.2} for illustration):

\begin{itemize}
    \item $w=0$ is a common zero of $h_{1}$ and $h_{2}$.
    \item If $c>g_{i}^{*}$, then $h_{i}$ has no positive zeros and is negative on $(0,+\infty)$, $i=1,2$.
    \item If $c=g_{i}^{*}$, then $h_{i}$ has only one positive zero $w=\lambda_*:=\frac{a^{2}-\mu^{2}}{4 a\xi}$, $i=1,2$. Furthermore, $h_{i}\leq 0$ on $(0,+\infty)$.
    \item If $0<c< g_{i}^{*}$, then $h_{i}$ has exactly two positive zeros, denoted by $w=\lambda_{i,1}(c)$ and $w=\lambda_{i,2}(c)$, with   $\lambda_{i,1}(c)<\lambda_*<\lambda_{i,2}(c)$ and $i=1,2$. In particular,
        $$\lambda_{2,1}(c)<\lambda_{1,1}(c)<
        \lambda_*<\lambda_{1,2}(c)<\lambda_{2,2}(c)\ \ \text{when}\ \ 0<c<g_1^*.$$
    (The above inequalities come from the fact that each $\hat h_i$ is decreasing in the parameter $c$ and $\hat h_1<\hat h_2$ on $(0,+\infty)$).
    Also, $h_{i}$ is negative on $(0,\lambda_{i,1})\cup (\lambda_{i,2},+\infty)$ and positive on $(\lambda_{i,1},\lambda_{i,2})$.
\end{itemize}
Accordingly, there are no common zeros of $h_{1}$ and $h_{2}$ except $w=0$, i.e. equation \eqref{25} has only one constant limit cycle $w=0$.
And we have that
\begin{itemize}
    \item[(a)] If $0<c< g_{1}^{*}$, then $\mathcal U=\left(0,\lambda_{2,1}\right)\cup(\lambda_{2,1},\lambda_{1,1})
        \cup(\lambda_{1,1},\lambda_{1,2})\cup(\lambda_{1,2},\lambda_{2,2})
        \cup(\lambda_{2,2},+\infty)$.
    \item[(b)] If $c=g_{1}^{*}$, then $\mathcal U=\left(0,\lambda_{2,1}\right)\cup(\lambda_{2,1},\lambda_*)\cup (\lambda_*,\lambda_{2,2})\cup(\lambda_{2,2},+\infty)$.
    \item[(c)] If $g_{1}^{*}<c<g_{2}^{*}$, then $\mathcal  U=\left(0,\lambda_{2,1}\right)\cup(\lambda_{2,1},\lambda_{2,2})\cup(\lambda_{2,2},+\infty)$.
    \item[(d)] If $c=g_{2}^{*}$, then $\mathcal U=\left(0,\lambda_{*}\right)\cup(\lambda_{*},+\infty)$.
    \item[(e)] If $c>g_{2}^{*}$, then $\mathcal U=(0,+\infty)$.
  \end{itemize}

\textbf{Step 2: }Determine the sign changes of $h_{1}+h_{2}$.

Here we introduce a new threshold
        \begin{equation}\label{threshold3}
            T^{***}
            :=\left\{
  \begin{aligned}
            &0, &\text{if}\ 0<c\leq g_1^* \text{ or } c\geq g_2^*,\\
            &\inf_{w\in (\lambda_{2,1},\lambda_{2,2})}
            \left(q\left(1-\frac{w+pc}{w+(p+1)c}\cdot\frac{\hat h_1(w)}{\hat h_2(w)}\right)\right),
            &\text{if}\ g_1^*<c<g_2^*.
  \end{aligned}
  \right.
        \end{equation}
We emphasize that $T^{***}$ is well-defined when $g_1^*<c<g_2^*$. Indeed, since $\hat h_i$ has the same sign as $h_i$ on $(0,+\infty)$ for $i=1,2$, the facts stated in Step 1 shows that $-\frac{\hat h_1}{\hat h_2}$ is positive on $(\lambda_{2,1},\lambda_{2,2})$ in this case, which ensures the existence of the infimum in \eqref{threshold3}.

Similar to the arrangement of the analysis for equation \eqref{YU11}, we present the next three facts directly for $h_1+h_2$
(see again Figure \ref{Figures of application3.2} for illustration), and provide the detailed analysis in Appendix \ref{statements of application3.2}.

\begin{enumerate}
    \item[(f)] If $0<c\leq g_{1}^{*}$, then $h_{1}+h_{2}$ has exactly two positive zeros, denoted by $x=\mu_{1}$ and $x=\mu_{2}$, satisfying $\mu_{1}\in(\lambda_{2,1},\lambda_{1,1})$ and $\mu_{2}\in(\lambda_{1,2},\lambda_{2,2})$.
                Furthermore, $h_{1}+h_{2}$ is negative on $(0,\mu_{1})\cup(\mu_{2},+\infty)$ and positive on $(\mu_{1},\mu_{2})$, respectively.
    \item[(g)] If $g_{1}^{*}<c< g_{2}^{*}$, then
        \begin{enumerate}
            \item[(g.1)] $h_{1}+h_{2}\leq 0$, if additionally $T\leq T^{***}$.
            \item [(g.2)] $h_{1}+h_{2}$ has at most two positive zeros on $(\lambda_{2,1},\lambda_{2,2})$ and is negative on $(0,\lambda_{2,1})\cup(\lambda_{2,2},+\infty)$, if additionally $T^{***}<T<\overline{T}$.
        \end{enumerate}
    \item[(h)] If $c\geq g_{2}^{*}$, then $h_{1}+h_{2}$ has no positive zeros and is negative on $(0,+\infty)$.
\end{enumerate}
Therefore, the combination of statements (a)--(h), yields the number of sign changes of $h_{1}+h_{2}$ on the connected components of $\mathcal{U}$. See Table \ref{Table of application3.2} below.
\begin{table}[!ht]
    \center
    \begin{threeparttable}
    \begin{tabular}{|c|c|c|c|}
        \hline
         \diagbox[innerwidth=2cm]{$T$}{$c$} &$0<c\leq g_{1}^{*}$& $g_{1}^{*}<c<g_{2}^{*}$&$c\geq g_{2}^{*}$\\
        \hline
       $0<T<T^{***}$& \multicolumn{1}{c|}{\multirow{3}{*}{\makecell[c]{ \rm{$s_{(\lambda_{2,1},\lambda_{1,1})}=s_{(\lambda_{1,2},\lambda_{2,2})}$}\\ \rm{$=1$ and }\\ \rm{$s_{(0,\lambda_{2,1})}=s_{(\lambda_{1,1},\lambda_{1,2})}$}\\\rm{$=s_{(\lambda_{2,2},+\infty)}=0$}}}}  &\multicolumn{1}{c|}{\makecell[c]{{\rm{$s_{(0,\lambda_{2,1})}=s_{(\lambda_{2,1},\lambda_{2,2})}$}}\\ \rm{$=s_{(\lambda_{2,2},+\infty)}=0$}}}&\multicolumn{1}{c|}{\multirow{3}{*}{\rm{$s_{(0,+\infty)}=0$}}} \\
       \cline{1-1}
       $T=T^{***}$& & & \\
       \cline{1-1}\cline{3-3}
       $T^{***}<T<\overline{T}$& & \makecell[c]{{\rm{$s_{(\lambda_{2,1},\lambda_{2,2})}\leq 2$} and }\\{\rm{ $s_{(0,\lambda_{2,1})}=s_{(\lambda_{2,2},+\infty)}=0$}}}& \\
       \hline
    \end{tabular}
    \end{threeparttable}
    \caption{The number of sign changes of $h_{1}+h_{2}$.
    In the case where $c=g_{1}^{*}$, we use the convention $\lambda_{1,1}=\lambda_{1,2}=\lambda_*$.}\label{Table of application3.2}
    \vspace{-0.6cm}
\end{table}

\begin{figure}[!htbp]
    \centering
    \begin{subfigure}[b]{0.49\textwidth}
        \includegraphics[scale=0.32]{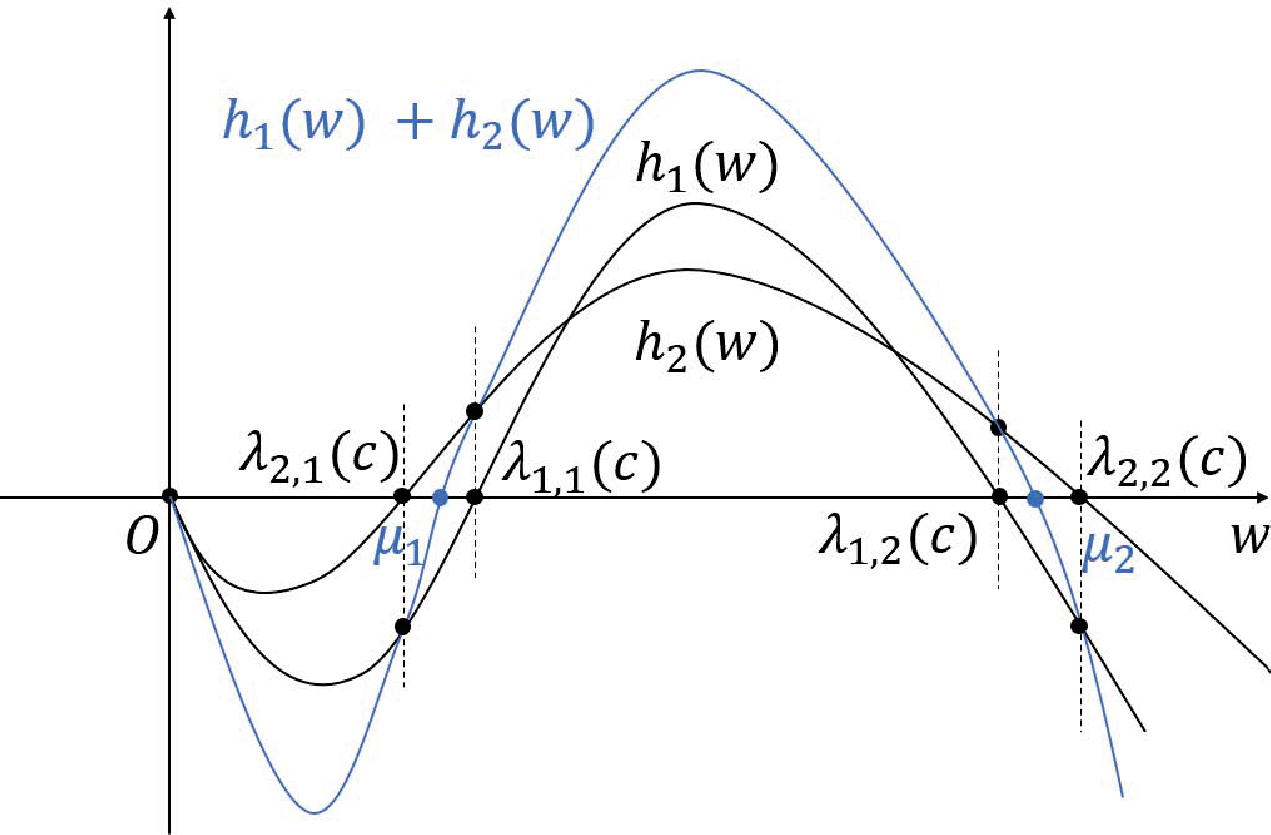}
        \caption{$0<c\leq g_1^*$}
    \end{subfigure}
    \begin{subfigure}[b]{0.49\textwidth}
        \includegraphics[scale=0.32]{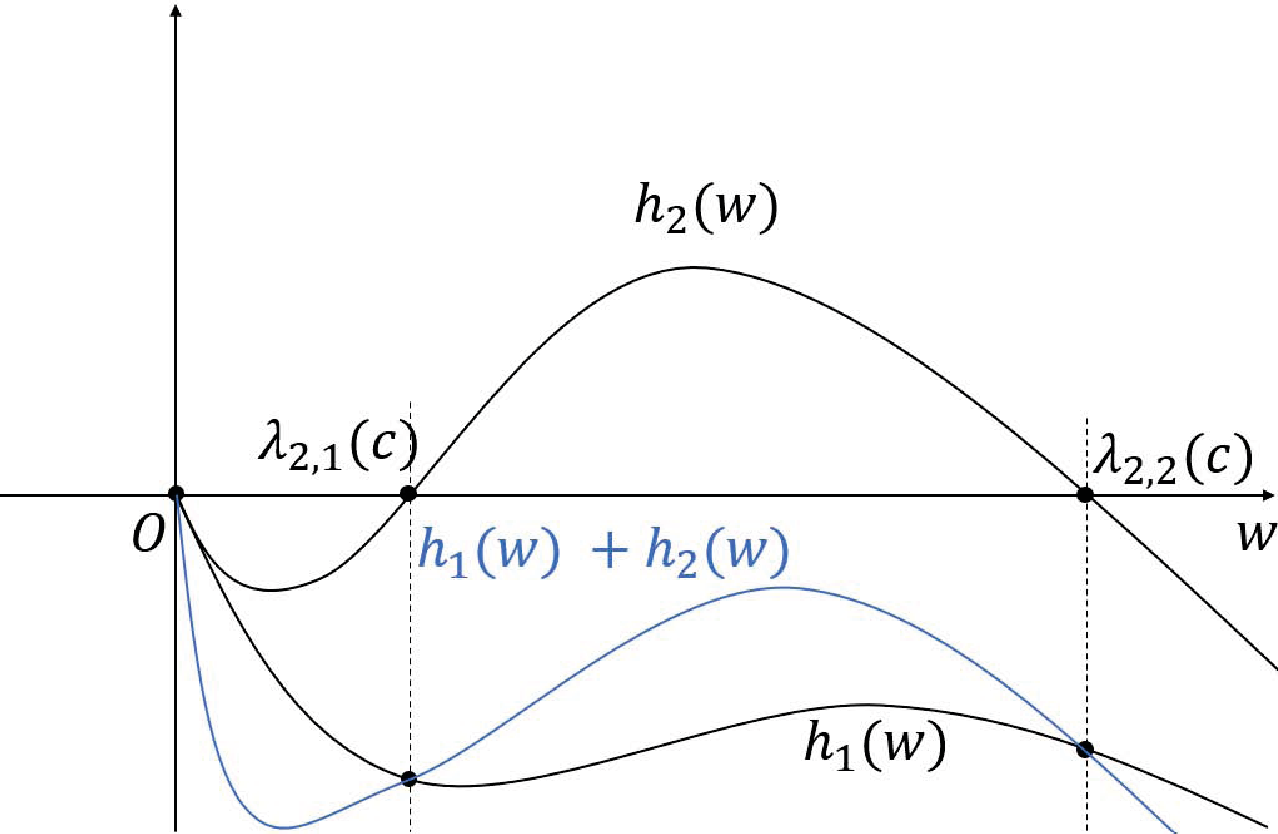}
        \caption{$g_1^*<c< g_2^*$ and $0<T\leq T^{***}$}
    \end{subfigure}
    \begin{subfigure}[b]{0.49\textwidth}
        \includegraphics[scale=0.32]{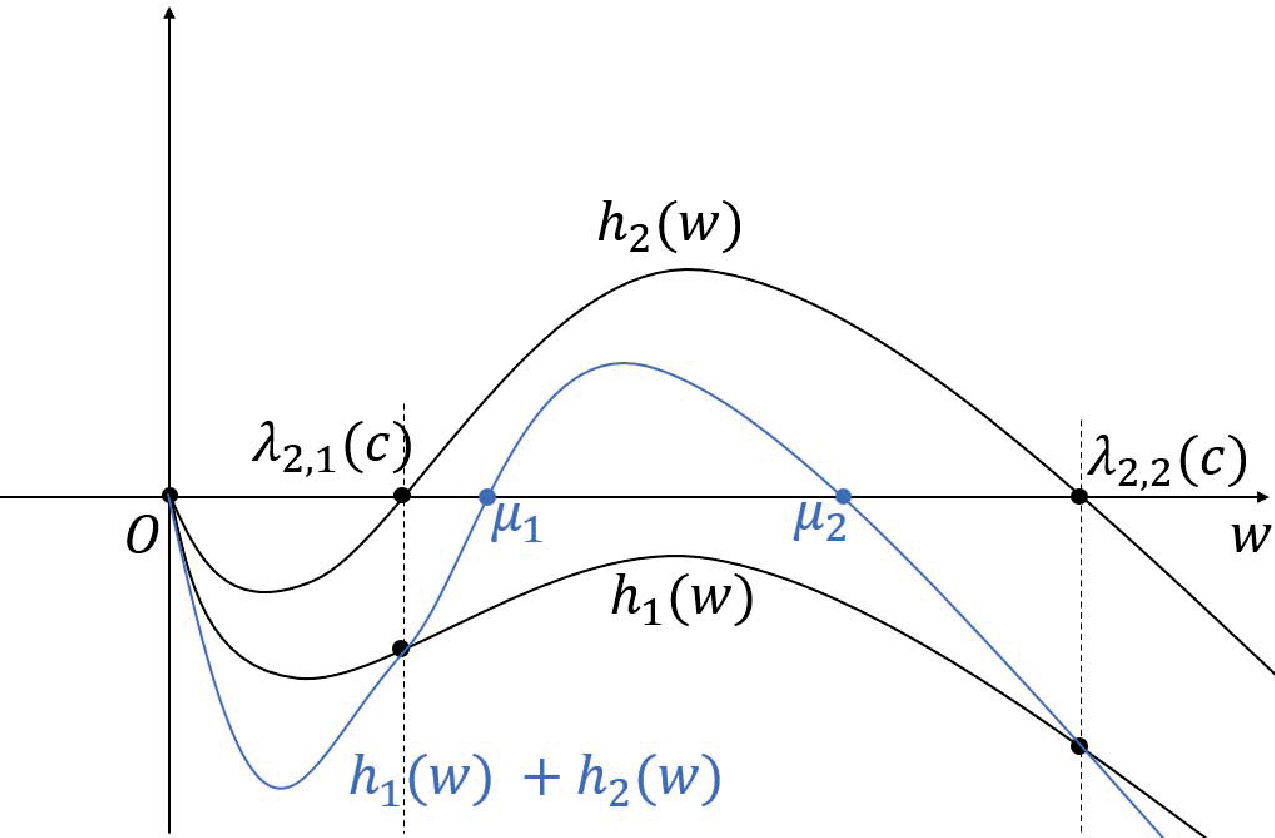}
        \caption{$g_1^*<c<g_2^*$ and $T^{***}<T<\overline T$}
    \end{subfigure}
    \begin{subfigure}[b]{0.49\textwidth}
        \includegraphics[scale=0.33]{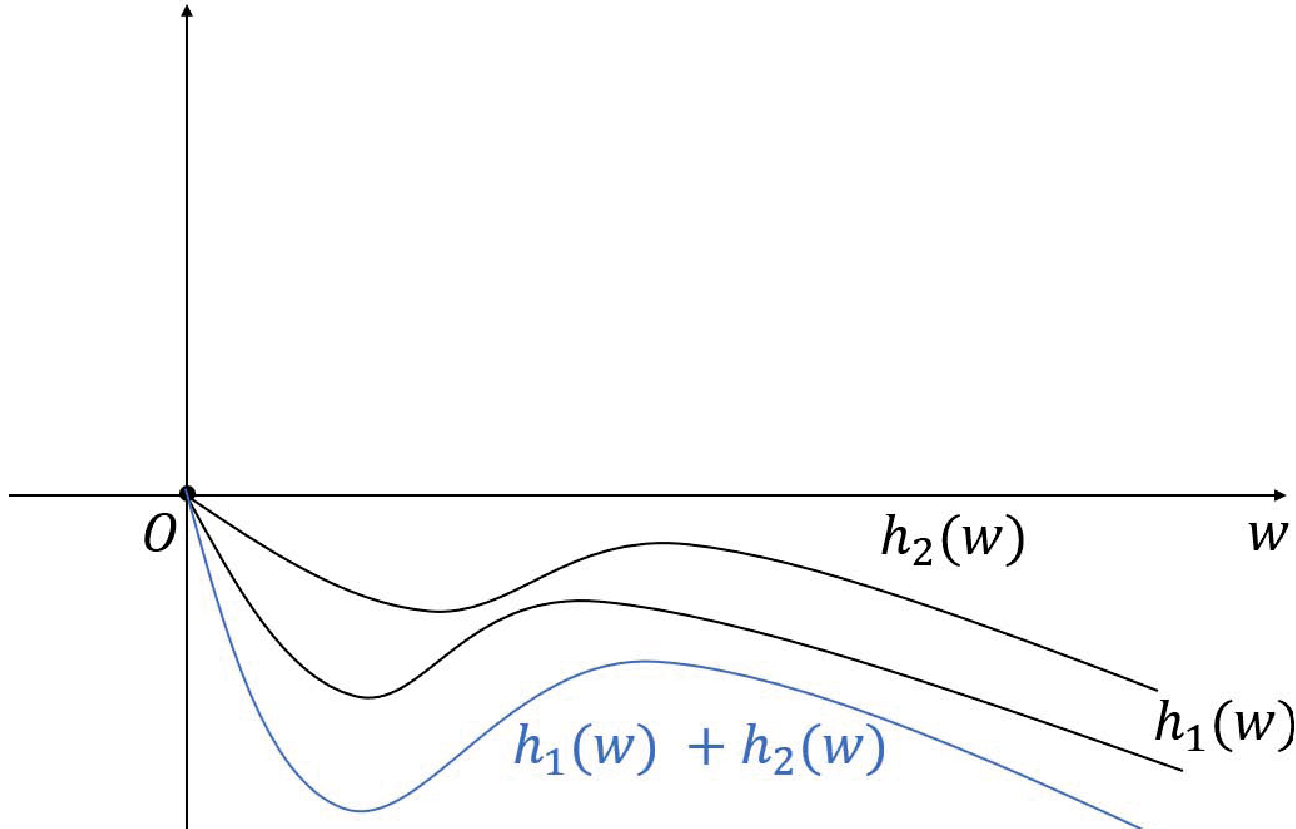}
        \caption{$c\geq g_2^*$}
    \end{subfigure}
    \caption{The relative positions of the zeros of $h_1$, $h_2$, and $h_1+h_2$ for different values of $c$ and $T$.}\label{Figures of application3.2}
\end{figure}

Now, we are able to characterize the limit cycles of equation \eqref{25} as follows.

\begin{proposition}\label{application3.2}
    For equation \eqref{25}, the constant limit cycle $w=0$ is stable. Moreover, the following statements hold.
    \begin{itemize}
        \item[(i)] If $0<c\leq g_{1}^{*}$, then equation \eqref{25} has exactly two non-constant limit cycles, where the lower one is unstable and the upper one is stable.
        \item[(ii)] If either $g_{1}^{*}<c<g_{2}^{*}$ and $0<T\leq T^{***}$, or $c\geq g_{2}^{*}$, then equation \eqref{25} has no non-constant limit cycles.
        \item[(iii)] If $g_{1}^{*}<c<g_{2}^{*}$ and $T^{***}<T<\overline{T}$, then equation \eqref{25} has at most two non-constant limit cycles. These limit cycles (if any) are located in $[0,1]\times(\lambda_{2,1},\lambda_{2,2})$.
    \end{itemize}
\end{proposition}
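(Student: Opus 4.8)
The plan is to run, for equation \eqref{25}, the two-step procedure already employed for equation \eqref{YU11}, combining the sign-change data collected in Table \ref{Table of application3.2} with Theorems \ref{th2}, \ref{coro1} and Propositions \ref{propostion2.5}, \ref{proposition2.6}. First I would dispose of the constant solution: since $h_1'(0)=-2q(\mu+\xi(p+1)c)<0$ and $h_2'(0)=-2(T-q)(\mu+\xi pc)<0$, Proposition \ref{proposition2.6} gives $P'(0)=\exp\!\big(\tfrac12(h_1'(0)+h_2'(0))\big)<1$, so $w=0$ is a hyperbolic stable constant limit cycle. Next, by statement (i.2) of Theorem \ref{th2} together with Table \ref{Table of application3.2}, every non-constant limit cycle of \eqref{25} is confined to the union of those connected components of $\mathcal U$ on which $h_1+h_2$ changes sign.

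For statement (i), $0<c\le g_1^*$, Table \ref{Table of application3.2} exhibits exactly two such components, $(\lambda_{2,1},\lambda_{1,1})$ and $(\lambda_{1,2},\lambda_{2,2})$ (using the convention $\lambda_{1,1}=\lambda_{1,2}=\lambda_*$ when $c=g_1^*$), each carrying a single sign change, so statement (ii) of Theorem \ref{th2} yields at most one limit cycle in each. To establish existence and stability I would write $P=P_2\circ P_1$, with $P_j$ the time-$\tfrac12$ map of $\dot w=h_j(w)$, and exploit the boundary signs found in Step 1: at $w_0=\lambda_{1,1}$ (resp.\ $\lambda_{1,2}$) one has $h_1=0$ and $h_2>0$, hence $P(w_0)=P_2(w_0)>w_0$; at $w_0=\lambda_{2,1}$ (resp.\ $\lambda_{2,2}$) one has $h_2=0$ and $h_1<0$, and since the $h_j$-flows over a half-period cannot cross the adjacent equilibria, $P(w_0)<w_0$. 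The intermediate value theorem then produces exactly one fixed point in each interval; in the lower one $P-\mathrm{id}$ passes from negative to positive, giving an unstable limit cycle, and in the upper one from positive to negative, giving a stable one, which is statement (i). Statement (ii) is immediate, since in each of its cases Table \ref{Table of application3.2} records no sign change of $h_1+h_2$ on any component of $\mathcal U$, so Theorem \ref{th2}(i.2) forbids non-constant limit cycles.

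Statement (iii), with $g_1^*<c<g_2^*$ and $T^{***}<T<\overline T$, is where a genuinely new ingredient enters. By Table \ref{Table of application3.2} the non-constant limit cycles lie in $[0,1]\times(\lambda_{2,1},\lambda_{2,2})$; if $s_{(\lambda_{2,1},\lambda_{2,2})}\le1$ then Theorem \ref{th2}(ii) already gives at most one, so the only remaining possibility is $s_{(\lambda_{2,1},\lambda_{2,2})}=2$. There I would bound the total number of periodic solutions: $h_1,h_2\in C^3(\mathbb R_0^+)$ with $h_1'''(w)=-\tfrac{12qa(p+1)^2c^2}{(w+(p+1)c)^4}<0$ and $h_2'''(w)=-\tfrac{12(T-q)ap^2c^2}{(w+pc)^4}<0$ throughout $\mathbb R_0^+$, so Proposition \ref{propostion2.5} with $k=3$ shows that \eqref{25} has at most three periodic solutions, and removing the constant one $w=0$ leaves at most two non-constant ones, localized in $(\lambda_{2,1},\lambda_{2,2})$ by Theorem \ref{th2}(i.2). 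The step I expect to be the main obstacle is the bookkeeping in statement (i): one has to check carefully that over each half-period the $h_1$- and $h_2$-flows keep the solution inside the intended subinterval of $\mathcal U$ (a monotonicity-between-equilibria argument), and to treat the boundary value $c=g_1^*$ separately, where $h_1$ has a double zero at $\lambda_*$ and only $h_1\le0$ holds nearby; the remaining estimates are routine consequences of Theorems \ref{th2} and \ref{coro1}.
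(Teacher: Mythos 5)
Your proposal is correct and follows essentially the same route as the paper's proof: Proposition \ref{proposition2.6} for the stability of $w=0$, Table \ref{Table of application3.2} combined with Theorem \ref{th2} for the localization and the at-most-one-per-interval bounds, the boundary-sign argument at $\lambda_{2,1},\lambda_{1,1},\lambda_{1,2},\lambda_{2,2}$ for existence and stability in (i), and the negativity of $h_1'''$, $h_2'''$ together with Proposition \ref{propostion2.5} (minus the constant cycle $w=0$) for the bound in (iii). Incidentally, your expressions $h_1'''(w)=-\frac{12qa(p+1)^2c^2}{(w+(p+1)c)^4}$ and $h_2'''(w)=-\frac{12(T-q)ap^2c^2}{(w+pc)^4}$ are the correct ones --- the paper's display \eqref{26} has a harmless typo ($(p+1)c$ and $pc$ in place of $(p+1)^2c^2$ and $p^2c^2$), but only the sign matters for the argument.
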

\begin{proof}
    First, the assertion for the stability of the limit cycle $w=0$ can be easily checked by using Proposition \ref{proposition2.6} and noting that
    \begin{equation*}
        h'_1(0)+h'_2(0)=-2q\big(\mu+\xi(p+1)c\big)-2(T-q)(\mu+\xi pc)<0.
    \end{equation*}

    Next we verify the statements one by one.

    (i) It is clear from Table \ref{Table of application3.2} and statement (ii) of Theorem \ref{th2} that equation \eqref{25} has at most two limit cycles.
    Furthermore, by Step 1 we get that:
    $h_{1}(\lambda_{2,1})<0$ and $h_{2}(\lambda_{2,1})=0$;
    $h_{1}(\lambda_{1,1})=0$ and $h_{2}(\lambda_{1,1})>0$;
    $h_{1}(\lambda_{1,2})=0$ and $h_{2}(\lambda_{1,2})>0$;
    $h_{1}(\lambda_{2,2})<0$ and $h_{2}(\lambda_{2,2})=0$.
    These yield an unstable limit cycle in $[0,1]\times (\lambda_{2,1},\lambda_{1,1})$ and a stable limit cycle in $[0,1]\times (\lambda_{1,2},\lambda_{2,2})$ of the equation. Hence, statement (i) holds.

    (ii) It is also a direct conclusion applying Table \ref{Table of application3.2} and statement (i.2) of Theorem \ref{th2}.

    (iii) Again due to Table \ref{Table of application3.2} and statement (i.2) of Theorem \ref{th2}, it is sufficient to estimate the number of limit cycles of  equation \eqref{25} in $[0,1]\times (\lambda_{2,1},\lambda_{2,2})$.
    Observe that $h'''_1<0$ and $h'''_2<0$ hold for the equation (see also \eqref{26} in Appendix \ref{statements of application3.2} for details). It follows from Proposition \ref{propostion2.5} that equation \eqref{25} has at most three limit cycles in $[0,1]\times\mathbb R^+_0$, where one is always at $w=0$.
    Consequently, the number of non-constant limit cycles in $[0,1]\times(\lambda_{2,1},\lambda_{2,2})$ is no more than two. The assertion is verified.
\end{proof}
\begin{proof}[Proof of Theorem \ref{subapplication2.2}:]
Since equation \eqref{mosquito} is reduced to equation \eqref{25} under assumption, the conclusion of the theorem follows directly from Proposition \ref{application3.2}.
\end{proof}

\section*{Acknowledgements}

We appreciate the anonymous referee for his/her comments and suggestions which help us to improve both mathematics and presentations of this paper.

 The first and third authors is supported by the NNSF of China (No. 12371183).
The second author are supported by NNSF of China (No. 12271212).

\section{Appendix}
\subsection{The third formula in Lemma \ref{han} and estimate in Proposition \ref{propostion2.5}}\label{A.1}
This subsection is devoted to verifying the derivative formula \eqref{han3} and then to proving Proposition \ref{propostion2.5}.
\begin{proof}[Proof of Lemma \ref{han}]
    We only need to prove statement (iii). By assumption, $f(t,x)$ is $C^3$ differentiable in $x$, and so does $x(t;0,x_0)$ in $x_0$. Differentiating equality \eqref{han2} yields
    \begin{equation*}
        \begin{aligned}
            P^{'''}(x_{0})=
            &P^{''}(x_{0})\int_{0}^{T}\frac{\partial^2 f}{\partial x^2}\big(t,x(t;0,x_{0})\big)\frac{\partial x}{\partial x_{0}}(t;0,x_{0})dt\\
            &+P^{'}(x_{0})\int_{0}^{T}\Bigg(\frac{\partial^3 f}{\partial x^3}\big(t,x(t;0,x_{0})\big)\bigg(\frac{\partial x}{\partial x_{0}}(t;0,x_{0})\bigg)^{2}\\
            &+\frac{\partial^2 f}{\partial x^2}\big(t,x(t;0,x_{0})\big)\frac{\partial^{2} x}{\partial x_{0}^{2}}(t;0,x_{0})\Bigg)dt.
        \end{aligned}
    \end{equation*}
    It is clear from \eqref{han2} that the first summand is equal to $(P''(x_0))^2/P'(x_0)$. With regard to the third summand note that, on account of the expression of $\frac{\partial x}{\partial x_0}(t;0,x_0)$ given in statement (ii),
    \begin{equation*}
        \begin{split}
\int_{0}^{T}&\frac{\partial^2 f}{\partial x^2}\big(t,x(t;0,x_{0})\big)\frac{\partial^{2} x}{\partial x_{0}^{2}}(t;0,x_{0})dt\\
            &=\int_{0}^{T}\Bigg(\frac{\partial^2 f}{\partial x^2}\big(t,x(t;0,x_{0})\big)\frac{\partial x}{\partial x_{0}}(t;0,x_{0})\int_{0}^{t}\frac{\partial^2 f}{\partial x^2}\big(s,x(s;0,x_{0})\big)\frac{\partial x}{\partial x_{0}}(s;0,x_{0})ds\Bigg)dt\\
            &=\frac{1}{2}\Bigg[\int_{0}^{T}\frac{\partial^2 f}{\partial x^2}\big(t,x(t;0,x_{0})\big)\frac{\partial x}{\partial x_{0}}(t;0,x_{0})dt\Bigg]^{2}
            =\frac{1}{2}\bigg(\frac{P''(x_0)}{P'(x_0)}\bigg)^2.
        \end{split}
    \end{equation*}
    Hence,
    \begin{equation*}
        P^{'''}(x_{0})=P^{'}(x_{0})\Bigg[\frac{3}{2}\bigg(\frac{P^{''}(x_{0})}{P^{'}(x_{0})}\bigg)^{2}+\int_{0}^{T}\frac{\partial^3 f}{\partial x^3}\big(t,x(t;0,x_{0})\big)\bigg(\frac{\partial x(t;0,x_{0})}{\partial x_{0}}\bigg)^{2}dt\Bigg].
    \end{equation*}
\end{proof}

\begin{proof}[Proof of Proposition \ref{propostion2.5}]
    For the case $k=1,2$ and the case $k=3$ with $f_i^{(3)}$'s being non-negative, the assertion follows directly from Lemma \ref{han} . Also the case when $k=3$ with $f_i^{(3)}$'s being non-positive can be reduced to that of being non-negative by taking into account the change of variable $x\mapsto -x$ for equation \eqref{equation1}.
\end{proof}

\subsection{Validity of statements (d)--(g) in Step 2 of the analysis of equation \eqref{YU11}}\label{statements of application3.1}
    We continue to use the notations $h_1$, $h_2$, $g^*$, $c^*$ and $T^*$ given in \eqref{YU11} and \eqref{thresholds1}. To show the validity of the statements, let us consider the function
    \begin{equation*}
        G_{T,c}(w)
        :=\frac{h_1(w)+h_2(w)}{2w}
        =
        \left(\frac{a\overline{T}}{w+c}-\xi T\right)w+(a-\mu)\left(T-T^*\right).
    \end{equation*}
    It is clear that on the interval $(0,+\infty)$, $G_{T,c}$ has the same zeros and the same sign as $h_{1}+h_{2}$. And it has at most two positive zeros because $(w+c)G_{T,c}(w)$ is a quadratic function. Moreover, from the facts stated in Step 1, we have that
    \begin{itemize}
      \item[($a$)] If $0<c< g^{*}$, then $G_{T,c}(\lambda_i)=\frac{h_{2}(\lambda_i)}{2\lambda_i}>0$ for $i=1,2$.
    \item[($b$)] If $c=g^{*}$, then $G_{T,c}(\lambda_*)=\frac{h_{2}(\lambda_*)}{2\lambda_*}>0$.
    \item[($c$)] $G<0$ on $[A,+\infty)$ for $c>0$.
    \end{itemize}

    Now we verify statements (d)--(g) one by one.

    (d) If $\overline{T}<T<T^{*}$ and $0<c< g^{*}$, then $G_{T,c}(0)=(a-\mu)\left(T-T^*\right)<0$. Taking statements ($a$) and ($c$) into account, we know that $G_{T,c}$ has exactly two zeros, denoted by $w=\mu_{1}$ and $w=\mu_{2}$, satisfying $\mu_1\in(0,\lambda_{1})$ and $\mu_2\in(\lambda_{2},A)$. In addition, it is easy to see that $G_{T,c}$ is negative on $(0,\mu_{1})\cup(\mu_{2},+\infty)$ and positive on $(\mu_{1},\mu_{2})$. Hence, statement (d.1) holds. Following the same argument but with statement ($b$) instead of statement ($a$), statement (d.2) is then obtained.

    (e) The conclusion is obvious taking into account statement ($c$).

    (f) According to statement ($c$), it is sufficient to verify the assertion for $w\in(0,A)$. To this end, we first get by a simple calculation that
    \begin{align*}
      \frac{\partial G_{T,c}(w)}{\partial T}=-\xi (w-A)>0
      ,\indent
      \frac{\partial G_{T,c}(w)}{\partial c}
      =-\overline T\left(\frac{a}{(w+c)^2}+\xi\right)<0.
    \end{align*}
    Thus, for $\overline T<T\leq T^{*}$, $c\geq c^{*}$ and $w\in(0,A)$, the function $G_{T,c}(w)$ is increasing in $T$ and decreasing in $c$, which implies that $G_{T,c}(w)\leq G_{T^*,c}(w)\leq G_{T^*,c^*}(w)$.

    On the other hand, observe that $G'_{T,c}(w)=\frac{ac\overline{T}}{(w+c)^{2}}-\xi T$. Therefore $G'_{T^*,c^*}$ is decreasing on $(0,A)$. From the definitions of $T^*$ and $c^*$, one has
    \begin{align}\label{A.2}
      G'_{T^*,c^*}(0)
      =
      \overline T\left(\frac{a}{c^*}-\xi \frac{a+\xi c^*}{a-\mu}\right)
      =
      -\frac{\overline T}{(a-\mu)c^*}\Big((\xi c^*)^{2}+a(\xi c^*)-a(a-\mu)\Big)
      =0.
    \end{align}
    Consequently, $G'_{T^*,c^*}<0$ (i.e. $G_{T^*,c^*}$ is decreasing) on $(0,A)$. We obtain
    $G_{T,c}(w)<G_{T^*,c^*}(w)<G_{T^*,c^*}(0)=0$
    for $\overline T<T\leq T^{*}$, $c\geq c^{*}$ and $w\in(0,A)$. The assertion follows.

    (g) First, if $T=T^{*}$ and $0<c<c^*$, then $G_{T,c}(0)=G_{T^*,c}(0)=0$. Using the expression of $G'_{T,c}$ and \eqref{A.2}, one can additionally check that $$c\cdot G'_{T,c}(0)=c\cdot G'_{T^*,c}(0)>c^*\cdot G'_{T^*,c^*}(0)=0,$$ which yields $G'_{T,c}(0)>0$. Also, if $T>T^{*}$, then $G_{T,c}(0)=(a-\mu)(T-T^*)>0$. These facts together with statement ($c$), ensure that $G_{T,c}$ has at least one zero on $(0,A)$ in both cases, with the total number being odd. Recall that the number of zeros of $G_{T,c}$ is at most two. This zero is unique and therefore the assertion immediately follows.

    \subsection{Validity of statements (f)--(h) in Step 2 of the analysis of equation \eqref{25}}\label{statements of application3.2}
    To start with, we know from \eqref{25} and a direct calculation that
    \begin{align}\label{26}
         h_1'''(w)=-\frac{12aq(p+1)c}{(w+(p+1)c)^4}<0,\
         h_2'''(w)=-\frac{12a(T-q)pc}{(w+pc)^4}<0,\indent w\in[0,+\infty).
    \end{align}
    Since $h_1+h_2$ always has a zero at $w=0$, it can have at most two positive zeros.

    Let us now verify statements (f)--(h) one by one.

        (f) If $0<c\leq g_{1}^{*}$, then according to the facts stated in Step 1, we have
         \begin{align*}
           h_{1}(\lambda_{2,1})+h_{2}(\lambda_{2,1})=h_{1}(\lambda_{2,1})<0\ \ \text{and}\ \
           h_{1}(\lambda_{1,1})+h_{2}(\lambda_{1,1})=h_{2}(\lambda_{1,1})>0.
         \end{align*}
        Thus, $h_{1}+h_{2}$ has a zero on the interval $(\lambda_{2,1},\lambda_{1,1})$, denoted by $w=\mu_1$.
        Similarly, another zero of $h_1+h_2$ is guaranteed on $(\lambda_{1,2},\lambda_{2,2})$, denoted by $w=\mu_2$. These two zeros are therefore the only ones of the function. And one can easily see by \eqref{25} that $h_{1}+h_{2}$ is negative on $(0,\mu_{1})\cup(\mu_{2},+\infty)$ and positive on $(\mu_{1},\mu_{2})$. Statement (f) holds.

        (g) If $g_{1}^{*}<c< g_{2}^{*}$, then again due to the facts in Step 1, $h_{1}+h_2$ is negative on $(0,\lambda_{2,1})\cup(\lambda_{2,2},+\infty)$ and therefore can only
        have zeros on $(\lambda_{2,1},\lambda_{2,2})$. Statement (g.2) follows.
        Moreover, we also know from Step 1 that $h_2$ is positive on $(\lambda_{2,1},\lambda_{2,2})$ and
        \begin{align*}
         h_1(w)+h_2(w)
         &=\frac{h_2(w)}{T-q}\left(T-q+\frac{(T-q) h_1(w)}{ h_2(w)}\right)\\
         &=\frac{h_2(w)}{T-q}\cdot
           \left(T-q\left(1-\frac{w+pc}{w+(p+1)c}\cdot\frac{\hat h_1(w)}{\hat h_2(w)}\right)\right).
        \end{align*}
        Thus, from definition \eqref{threshold3}, $h_{1}+h_{2}\leq 0$ holds on $(\lambda_{2,1},\lambda_{2,2})$ if additionally $T\leq T^{***}$. Statement (g.1) is true.

        (h) In this case where $c\geq g_{2}^{*}$, the conclusion is immediately obtained from the fact that both $h_{1}$ and $h_{2}$ are negative on $(0,+\infty)$.

\end{document}